\documentclass[11pt, DIV10,a4paper]{article}
\usepackage{color}
\usepackage{float}
\usepackage[bf,small]{caption2}
\usepackage{comment}
\usepackage{amsmath}
\usepackage{bigints}
\usepackage{rotating, booktabs}
\usepackage{amsopn}
\usepackage{amsfonts}
\usepackage{amsthm}
\usepackage{amssymb}
\usepackage{amsbsy}
\usepackage{multirow}
\usepackage{slashbox}
\usepackage{natbib}
\usepackage{tocbibind}
\hyphenpenalty=1000
\usepackage[a4paper,colorlinks,breaklinks,bookmarksopen,bookmarksnumbered]{hyperref}
\usepackage[refpage]{nomencl}
\usepackage{makeidx}
\usepackage{pst-all}
\usepackage{epsfig,psfrag}
\usepackage{graphicx}
\usepackage{amsmath}
\usepackage{epstopdf}
\usepackage{tabularx}
\usepackage{subfigure}
\usepackage{setspace}
\usepackage[mathscr]{euscript}
\usepackage[margin=1in]{geometry}
\usepackage{xr-hyper}
\singlespacing
%\doublespacing

%\usepackage{mathtools} % Mathe
\usepackage{amsfonts} % Mathesymbole
\usepackage{calc}
\usepackage{enumitem}

\newcommand{\ueq}[1][]{%
  \if\relax\detokenize{#1}\relax
    \sbox0{$\underbrace{=}_{}$}%
    \mathrel{\mathmakebox[\wd0]{=}}
  \else
    \mathrel{\underbrace{=}_{\mathclap{#1}}}
  \fi}

\newcommand{\bone}{\boldsymbol{1}}

\newcommand {\ctn}{\citet}       % change to \citet if using natbib
       % change to \citep if using natbib

\pagenumbering{arabic}

\usepackage{url}
\hyphenpenalty=1000

\newcommand{\bu}{\mathbf u}
\newcommand{\bv}{\mathbf v}
\newcommand{\bV}{\mathbf V}
\newcommand{\bW}{\mathbf W}
\newcommand{\bZ}{\mathbf Z}

\newcommand{\bX}{\mathbf X}
\newcommand{\by}{\mathbf y}
\newcommand{\bY}{\mathbf Y}

\newcommand{\bd}{\mathbf d}

\DeclareMathOperator*{\argmax}{argmax}

\newtheorem{theorem}{Theorem}
\newtheorem{definition}{Definition}

\numberwithin{equation}{section}
\numberwithin{algo}{section}
\numberwithin{table}{section}
\numberwithin{figure}{section}

\newtheorem{remark}[theorem]{Remark}

\normalsize
%\input{tex/ Definitions}

%For using epstopdf for conversion of .eps figures to .pdf, use te following command to compile:
%pdflatex -interaction=nonstopmode -shell-escape %.tex

\bibliographystyle{natbib}

\begin{document}

\title{\textbf{A Bayesian Multiple Testing Paradigm for Model Selection in Inverse Regression Problems}}
\author{Debashis Chatterjee$^{\dag}$ and Sourabh Bhattacharya$^{\dag, +}$ }
\date{}
\maketitle
\begin{center}
%$^{\dag}$  University of Chicago \\
$^{\dag}$ Indian Statistical Institute\\
$+$ Corresponding author:  \href{mailto: bhsourabh@gmail.com}{bhsourabh@gmail.com}
%\href{mailto: kshldey@gmail.com}{kshldey@gmail.com}
%,  \href{mailto: bhsourabh@gmail.com}{bhsourabh@gmail.com}\\
\end{center}

\begin{abstract}

%Model selection is arguably the most important area of statistics, which has received, and is continuing to receive, considerable attention. In the Bayesian
%statistical literature, this field is dominated by Bayes factors, the most principled and coherent approach to model comparison. But in spite of its advantages,
%Bayes factors are usually difficult to compute in practice and suffer from numerical instability. Moreover, it is well-known to suffer from the so-called
%Lindley's paradox. 
%
%The area of multiple hypotheses testing can be envisaged as a promising alternative to Bayes factors for model selection if properly formulated. Unfortunately,
%in spite of rising popularity of the multiple testing paradigm for general testing problems, its applicability and utility in model selection problems
%remain yet to be thoroughly investigated. 
%
%We are specifically interested in inverse regression problems where the objective is to infer about unobserved covariate values from observed responses and covariates,
%and hence from the Bayesian perspective, a prior must be specified for the unknown covariate values. Model selection in such inverse setup is almost non-existent
%in the statistical literature, a recent exception being consideration of pseudo-Bayes factors for such purpose (\ctn{Chat20a}).

Model selection in inverse regression problems where the objective is to infer about unobserved covariate values from observed responses and covariates, is almost non-existent
in the statistical literature, a recent exception being consideration of pseudo-Bayes factors for such purpose (\ctn{Chat20a}).

In this article, we propose a novel Bayesian multiple testing formulation for model and variable selection in inverse setups, judiciously embedding the idea of inverse
reference distributions proposed by \ctn{Bhattacharya13} in a mixture framework consisting of the competing models. We develop the theory and methods
in the general context encompassing parametric and nonparametric competing models, dependent data, as well as misspecifications. Our 
investigation shows that asymptotically the multiple testing procedure almost surely selects the best possible inverse model
that minimizes the minimum Kullback-Leibler divergence from the true model. 
We also show that the error rates, namely, versions of the false discovery rate and the false non-discovery rate
converge to zero almost surely as the sample size goes to infinity. Asymptotic $\alpha$-control of versions of the false discovery rate and its impact
on the convergence of false non-discovery rate versions, are also investigated.

With an aim to compare our multiple testing procedure with pseudo-Bayes factor, we consider the same simulation experiments with the same datasets reported in \ctn{Chat20a}.
The experiments involve small sample based selection among inverse Poisson log regression and inverse geometric logit and probit regression, 
where the regressions are either linear or based on Gaussian processes. Additionally, variable selection is also considered. Our multiple testing results 
turn out to be very encouraging in the sense of selecting the best models in all the cases and convincingly outperforming the pseudo-Bayes factors.  
\\[2mm]
{\bf Keywords:} {\it Bayesian multiple testing; Forward and inverse regression; Importance Resampling MCMC; Kullback-Leibler divergence; Model and variable selection;
Leave-one-out cross-validation.}
\end{abstract}

\section{Introduction}
\label{sec:intro}

Model selection is arguably the most important area of statistics, which has received, and is continuing to receive, considerable attention. 
But in spite of immense importance and popularity of this field, the issue of model selection in the context of inverse regression problems has received almost no attention
in either the classical or the Bayesian statistical literature.

In inverse regression problems the objective is to infer about unobserved covariate values from observed responses and covariates,
and hence from the Bayesian perspective, a prior must be specified for the unknown covariate values. 
Thus, it is in contrast with the traditional forward regression problems where given some covariate values, the response needs to be predicted.
An interesting motivation for the inverse regression setup is the quantitative palaeoclimate reconstruction problem where  
multivariate counts of a number of species
are available along with the observed climate values in modern times. Typically, data collected on or after the year 1950 are regarded as `modern data'. 
Also available are fossil assemblages of the same set of species, but deposited in lake sediments 
for past thousands of years. This is the fossil species data. However, the past climates corresponding to the fossil species data are unknown, and it is of interest
to predict the past climates given the modern data and the fossil species data. Roughly, the species composition are regarded as functions
of climate variables, since in general ecological terms, variations in climate drives variations in species, but not vice versa.
Thus, the species count data, which are the response variables, are modeled as functions of the climate variables, which are the covariates in this case. 
But the interest lies in prediction of climate variables, given the species count data, thereby pointing towards the inverse nature of the problem. 
%Note that from the Bayesian perspective, it is natural to consider a prior distribution on the unobserved covariate values.
%The past climates, which must be regarded as random variables,
%may also be interpreted as {\it unobserved covariate values}. It is thus natural to put a prior probability distribution on the unobserved covariate values.
\ctn{Chatterjee17} provide other examples of inverse regression problems.

As already mentioned, model selection in such inverse setups is almost non-existent
in the statistical literature. A recent exception is the consideration of pseudo-Bayes factors for such purpose (\ctn{Chat20a}).
Pseudo-Bayes factors seem to have been first constructed by \ctn{Geisser79} by combining the ideas of Bayes factor and cross-validation.
Notably, although the Bayes factor approach is arguably the most principled and coherent approach to model comparison, 
Bayes factors are usually difficult to compute in practice and suffer from numerical instability. Moreover, they are well-known to suffer from the so-called
Lindley's paradox. The cross-validation idea proposed by \ctn{Geisser79} is to replace the marginal density of the entire dataset in Bayes factors 
with products of cross-validation densities of individual data points. This constitutes the pseudo-Bayes factors which are computationally far simpler and numerically
much more stable than the corresponding Bayes factors. Furthermore, they are also immune to Lindley's paradox.
Recognizing the importance, \ctn{Chat20a} establish the asymptotic theory for pseudo-Bayes factors for both forward and inverse parametric and nonparametric regression problems
in a very general setup that allows for dependent data and misspecified models.
They illustrate their results with various theoretical examples and simulation experiments for small samples that even include simultaneous selection of models and covariates.
The results of their simulation experiments, although interesting and insightful, do leave the scope for further improvement.

The area of multiple hypotheses testing can be envisaged as a promising alternative to Bayes factors for model selection if properly formulated, and
can bring about the aforementioned desired improvement in inverse model selection. Unfortunately,
in spite of rising popularity of the multiple testing paradigm for general testing problems, its applicability and utility in general model selection problems
remain yet to be thoroughly investigated. 
In the classical multiple comparison context, \ctn{Shim98} use the sampling error of the Akaike Information Criterion (AIC) 
to select a ``confidence set of models" rather than a single model.
The method requires computation of standardized difference of AIC for every pair of models. Since every pair of models is involved, clearly, 
for even a moderate number of competing models
the computation becomes infeasible, and reliability of the proposed normal approximation need not be unquestionable in general situations.
We are not aware of any other significant research on model selection in the multiple testing framework.
Furthermore, multiple testing based model selection in inverse setups has not been hitherto even perceived. 

In this article, for the first time ever, we propose and develop a Bayesian multiple testing paradigm for inverse model selection problems.
Our starting point is the inverse reference distribution approach to Bayesian assessment of adequacy of inverse models introduced by \ctn{Bhattacharya13}.
In a nutshell, the inverse model adequacy assessment idea is as follows. Given response data $\bY_n=\{y_1,\ldots,y_n\}$, covariate data $\bX_n=\{x_1,\ldots,x_n\}$,
and the Bayesian model for the data, consider the inverse leave-one-out cross-validation setup where for each $i=1,\ldots,n$, 
$x_i$ needs to be predicted from the rest of the data and the underlying Bayesian model. Letting
$\tilde x_i$ denote the random variable corresponding to $x_i$ when the latter is treated as unknown, the interest is then in the cross-validation posteriors
$\pi(\tilde x_i|\bX_{n,-i},\bY_n)$; $i=1,\ldots,n$, where $\bX_{n,-i}=\{x_1,\ldots,x_{i-1},x_{i+1},\ldots,x_n\}$. Letting $\tilde\bX_n=\{\tilde x_1,\ldots,\tilde x_n\}$,
\ctn{Bhattacharya13} considers the `inverse reference distribution' of some suitable discrepancy measure 
$T(\tilde\bX_n)$ where $\tilde x_i\sim \pi(\cdot|\bX_{n,-i},\bY_n)$; $i=1,\ldots,n$.
If the observed discrepancy measure $T(\bX_n)$ falls within the desired $100(1-\alpha)\%$ credible interval of $T(\tilde\bX_n)$ where $\alpha\in (0,1)$, then
the underlying Bayesian model fits the data and not otherwise. \ctn{Bhattacharya13} provides a Bayesian decision theoretic formalization of the above idea and 
investigates its theoretical and methodological properties, pointing out its advantages over existing ideas on forward Bayesian model assessment. The encouraging results
obtained in simulation experiments and real data analyses reported in \ctn{Bhattacharya13}, \ctn{Bhatta06} and \ctn{Sabya13} demonstrate the worth of the 
inverse model assessment idea using inverse reference distributions of appropriate discrepancy measures. 
Typical examples of discrepancy measures are given, for any $n$-dimensional vector $\bv_n=(v_1,\ldots,v_n)$, by
\begin{align}
	T_1(\bv_n) &=\sum_{i=1}^n\frac{\left|v_i - E(\tilde x_i|\bX_{n,-i},\bY_{n})\right|}
	{\sqrt{Var(\tilde x_i|\bX_{n,-i},\bY_{n})}}\label{eq:T1_old}
	%T_1(\tilde\bX_n) &=\frac{1}{n} \sum_{i=1}^n\frac{(\tilde x_i - E_{\pi}(\tilde x_i))^2}{V_{\pi}(\tilde x_i)+c},\label{eq:T1_1}\\
	%T_1(\bX_n) &=\frac{1}{n} \sum_{i=1}^n\frac{(x_i - E_{\pi}(\tilde x_i))^2}{V_{\pi}(\tilde x_i)+c},\label{eq:T1_2}\\
	%T_1(\bX^*_n) &=\frac{1}{n} \sum_{i=1}^n\frac{(x^*_i - E_{\pi}(\tilde x_i))^2}{V_{\pi}(\tilde x_i)+c}.\label{eq:T1_3}
\end{align}
and
\begin{align}
	T_2(\bv_n) &=\sum_{i=1}^n\frac{(v_i - E(\tilde x_i|\bX_{n,-i},\bY_{n}))^2}
	{Var(\tilde x_i|\bX_{n,-i},\bY_{n})}.\label{eq:T2_old}
	%T_2(\tilde\bX_n) &=\frac{1}{n} \sum_{i=1}^n\frac{\left|\tilde x_i - E_{\pi}(\tilde x_i))\right|}{\sqrt{V_{\pi}(\tilde x_i)+c}},\label{eq:T2_1}\\
	%T_2(\bX_n) &=\frac{1}{n} \sum_{i=1}^n\frac{\left|x_i - E_{\pi}(\tilde x_i)\right|}{\sqrt{V_{\pi}(\tilde x_i)+c}},\label{eq:T2_2}\\
	%T_2(\bX^*_n) &=\frac{1}{n} \sum_{i=1}^n\frac{\left|x^*_i - E_{\pi}(\tilde x_i)\right|}{\sqrt{V_{\pi}(\tilde x_i)+c}}.\label{eq:T2_3}
\end{align}
Since the inverse reference distribution approach turned out to be useful for assessing adequacy of inverse models, it is natural to discern that such 
an approach would be valuable even for inverse model selection. This very perception provided the motivation for our Bayesian multiple testing approach to
inverse model selection using inverse reference distributions. The key idea is to embed all the competing inverse regression models in a mixture setting
to constitute a single model needed for multiple testing. In simple terms, each hypothesis of the multiple testing procedure then essentially tests if the inverse
reference distribution of the corresponding inverse regression model gives high posterior probability to appropriate regions containing 
the observed discrepancy measure for the model,
in addition to testing if the posterior model probability is sufficiently high. 
The best inverse model is expected to have the highest posterior probability with respect to the above and our multiple testing formalism is so designed that
it renders this idea precise with relevant coherent supports.

Our theoretical and methodological development deals with parametric and nonparametric inverse competing models, allowing dependent data as well as 
misspecified models. In this highly general framework we show that our multiple testing procedure
almost surely selects the best possible model, as the sample size tends tends to infinity. Here ``best" is in terms of 
the minimizer of the minimum Kullback-Leibler (KL) divergence from the true model, concepts that will be subsequently clarified. 
Our investigation also brings out the desirable results that the error rates, namely, relevant versions of the false discovery rate and the false non-discovery rate,
asymptotically converge to zero almost surely. Insightful theoretical results on asymptotic $\alpha$-control of versions of the false discovery rate and its impact
on the convergence of versions of the false non-discovery rate, are also presented.

Monte Carlo based computations of the model-specific posterior probabilities associated with the inverse reference distributions  
proceed via fast and efficient Importance Re-sampling Markov Chain Monte Carlo (IRMCMC) (\ctn{Bhatta07}) 
aided by Transformation based Markov Chain Monte Carlo (TMCMC) (\ctn{Dutta13}) for generation of MCMC samples from the cross-validation posterior distributions
having excellent mixing properties. The posterior model probabilities are based on an efficient Gibbs sampling scheme that utilizes the forward pseudo-Bayes factors
for sampling from the relevant full conditional distributions of the model indices. Thus, our entire computational methodology is fast and efficient, more so because
each hypothesis is associated with a single inverse model, and pairwise comparison as in \ctn{Shim98} is ruled out.

Recalling that one of our objectives behind development of this multiple testing paradigm is to obtain superior inverse model selection results compared to 
those obtained by \ctn{Chat20a} using pseudo-Bayes factors, we apply our multiple testing formalism to the same simulation experiments with the 
same datasets as in \ctn{Chat20a}. The simulation experiments consist of two sets. In one set small sample based selection among inverse Poisson log regression and 
inverse geometric logit and probit regression is considered, where the regressions are either linear or based on Gaussian processes. In the other set,
variable selection among two covariates is considered in addition to the aforementioned inverse model selection problem. We conduct the experiments
in both non-misspecified and misspecified situations.
Not only does our multiple testing procedure succeeds in selecting the best inverse models and variables in all the cases, it significantly outperforms
the results yielded by the pseudo-Bayes factors.

The rest of our paper is structured as follows. We begin by distinguishing  
forward and inverse regression problems in Section \ref{sec:prelims}.
In Section \ref{sec:mult_inv} we introduce and develop our Bayesian multiple testing paradigm for inverse model selection. 
Then in Section \ref{sec:shalizi_briefing} we include a brief overview of Shalizi's approach (\ctn{Shalizi09}) to dealing with posterior convergence which plays a significant
role in the development of the asymptotic theory of our multiple testing procedure; further details are provided in Appendix \ref{subsec:assumptions_shalizi}.
We progress towards a general asymptotic theory by establishing in Section \ref{sec:asymp_v} the asymptotic properties of the posterior probabilities of the
alternative hypotheses. Asymptotic optimality theory for our multiple testing procedure is then provided in Section \ref{sec:optimality}, followed by
convergence theory of the measures of error in Section \ref{sec:error_asymptotics}. 
In Section \ref{sec:modification_practical} we recommend some judicious modifications of the hypotheses to suit practical implementation, and in Sections
\ref{sec:simstudy_ms} and \ref{sec:simstudy_vs} we provide details on two sets of simulation experiments with small samples involving Poisson and geometric 
linear and Gaussian process regression for relevant link functions, the second set also including in addition the problem of variable selection involving two covariates. 
Non-misspecified and misspecified situations are addressed in both the simulation experiments. 
Finally, in Section \ref{sec:conclusion}, we summarize our contributions and discuss selection of inverse models in the context of two palaeoclimate reconstruction problems,
recasting our previous results on inverse model assessment in the current multiple testing context.

\section{Distinction between forward and inverse regression problems}
\label{sec:prelims}
Here we essentially follow the discussion provided in \ctn{Chat20a}.
%Let us first consider the forward regression setup.
\subsection{Forward regression problem}
\label{subsec:forward}
For $i=1,\ldots,n$, let observed response $y_i$ be related to observed covariate $x_i$ through
\begin{equation}
	y_1\sim f(\cdot|\theta,x_1)~\mbox{and}~y_i\sim f(\cdot|\theta,x_i,\bY^{(i-1)})~\mbox{for}~i=2,\ldots,n,
	\label{eq:forward0}
\end{equation}
where for $i=2,\ldots,n$, $\bY^{(i)}=\{y_1,\ldots,y_i\}$ and $f(\cdot|\theta,x_1)$, $f(\cdot|\theta,x_i,\bY^{(i-1)})$ are known densities depending upon 
(a set of) parameters $\theta\in\Theta$, where $\Theta$ is the parameter space, which may be infinite-dimensional. 
For the sake of generality, we shall consider $\theta=(\eta,\xi)$, where $\eta$ is a function of the covariates, which we more explicitly denote as $\eta(x)$. 
The covariate $x\in\mathcal X$, $\mathcal X$ being the space of covariates. The part $\xi$ of $\theta$ will be assumed to consist of other parameters, such as the unknown
error variance. For Bayesian forward regression problems, some prior needs to be assigned on the parameter space $\Theta$.
For notational convenience, we shall denote $f(\cdot|\theta,x_1)$ by $f(\cdot|\theta,x_1,\bY^{(0)})$, so that we can represent (\ref{eq:forward0}) more conveniently
as 
\begin{equation}
	y_i\sim f(\cdot|\theta,x_i,\bY^{(i-1)})~\mbox{for}~i=1,\ldots,n.
	\label{eq:forward1}
\end{equation}
%We assume that given $\theta$, $y_i$ are conditionally independent.

\subsubsection{Examples of the forward regression setup}
\label{subsubsec:model_examples_forward}
\begin{itemize}
%\item[(i)] $y_i=\alpha+\beta x_i+\epsilon_i$, where $\theta=(\alpha,\beta)$ and $\epsilon_i$ are $iid$ Gaussian errors.
\item[(i)] $y_{i}\sim Bernoulli(p_i)$, where $p_i=H\left(\eta(x_i)\right)$, where $H$ is some appropriate link function and $\eta$ is some 
function with known or unknown form. For known, suitably parameterized form, the model is parametric. If the form of $\eta$ is unknown, 
one may model it by a Gaussian process, assuming adequate smoothness of the function.
\item[(ii)] $y_{i}\sim Poisson(\lambda_i)$, where $\lambda_i=H\left(\eta(x_i)\right)$, where $H$ is some appropriate link function and $\eta$ is 
some function with known (parametric) or unknown (nonparametric) form. Again, in case of unknown form of $\eta$, 
the Gaussian process can be used as a suitable model under sufficient smoothness
assumptions.
\item[(iii)] $y_{i}=\eta(x_i)+\epsilon_{i}$, where $\eta$ is a parametric or nonparametric function and
$\epsilon_{i}$ are $iid$ Gaussian errors. In particular, $\eta(x_i)$ may be a linear regression function, that is, $\eta(x_i)=\beta'x_i$, where
$\beta$ is a vector of unknown parameters.
Non-linear forms of $\eta$ are also permitted. 
Also, $\eta$ may be a reasonably smooth function of unknown form, modeled by some appropriate Gaussian process.		
\end{itemize}

\subsection{Inverse regression problem: first setup}
\label{subsec:inverse}

%The distribution $f_\theta$, parameter $\theta$, the parameter and the covariate space remain the same as in the forward regression setup. 
%Unlike in Bayesian forward regression problems where a prior needs to be assigned only to the unknown parameter $\theta$, a prior is also required for
%$\tilde x$, the unknown covariate observation associated with known response $\tilde y$, say. Given the entire dataset and $\tilde y$, the problem
%in inverse regression problems is to predict $\tilde x$. Hence, in the Bayesian inverse setup, a prior on $\tilde x$ is necessary.
%In (\ref{model}), $f_\theta$ is a known distribution depending upon 
%(a set of) parameters $\theta\in\Theta$, where $\Theta$ is the parameter space, which may be infinite-dimensional. 
%For the same of generality, we shall consider $\theta=(\eta,\xi)$, where $\eta$ is a function of the covariates, which we more explicitly denote as $\eta(x)$, 
%where $x\in\mathcal X$, $\mathcal X$ being the space of covariates. The part $\xi$ of $\eta$ will be assumed to consist of other parameters, such as the unknown
%error variance.

In inverse regression, the basic premise remains the same as in forward regression detailed in Section \ref{subsec:forward}. In other words,
the distribution $f(\cdot|\theta,x_i,\bY^{(i-1)})$, parameter $\theta$, the parameter and the covariate space remain the same as in the forward regression setup. 
However, unlike in Bayesian forward regression problems where a prior needs to be assigned only to the unknown parameter $\theta$, a prior is also required for
$\tilde x$, the unknown covariate observation associated with known response $\tilde y$, say. Given the entire dataset and $\tilde y$, the problem
in inverse regression is to predict $\tilde x$. Hence, in the Bayesian inverse setup, a prior on $\tilde x$ is necessary. Given model $\mathcal M$
and the corresponding parameters $\theta$, we denote such prior
by $\pi(\tilde x|\theta,\mathcal M)$. 
%For Bayesian cross-validation in inverse problems it is pertinent to successively leave out $(y_i,x_i)$; $i=1,\ldots,n$,
%and compute the posterior predictive distribution $\pi(\tilde x_i|\bY_n,\bX_{n,-i})$, from $y_i$ and the rest of the data $(\bY_{n,-i},\bX_{n,-i})$
%(see \ctn{Bhatta07}).
%But these posteriors are not useful for Bayes of pseudo-Bayes factors even for inverse regression setups. 
%%although the prior $\pi(\tilde x_i|\theta,\mathcal M)$
%%plays a significant role in inverse model comparison with Bayes of pesudo-Bayes factors. 
%The reason is that the Bayes factor for inverse regression is still the ratio of posterior odds and prior odds associated with the competing models, which
%as usual translates to the ratio of the marginal densities of the data under the two competing models. The marginal densities depend upon the prior 
%for $(\theta,\tilde x)$, however, under the competing models. The pseudo-Bayes factor for inverse models is then the ratio of products of the cross-validation
%posteriors of $y_i$, where $\theta$ and $\tilde x_i$ are marginalized out. Details of such inverse cross-validation posteriors and the definition of
%pseudo-Bayes factors for inverse regression are given below.

\subsection{Inverse regression problem: second setup}
\label{subsec:inverse2}
In the inverse regression context, we consider another setup under which \ctn{Chat20} establish consistency of the inverse cross-validation posteriors 
of $\tilde x_i$. Here we consider experiments with covariate observations $x_1, x_2, \ldots, x_n $ along with responses 
$\bY_{nm}=\{y_{ij} :i=1,\ldots,n, j=1,\ldots,m\}$.
In other words, the experiment considered here will allow us to have $m$ samples of responses $\by_i=\{y_{i1}, y_{i2}, \ldots, y_{im}\}$ against each covariate observation
$x_{i}$, for $i=1, 2,\ldots, n$. Again, both $x_i$ and $y_{ij}$ are allowed to be multidimensional. Let $\bY_{nm,-i}=\bY_{nm}\backslash\{\by_i\}$.
%We shall investigate the large sample scenario where both $m, n\rightarrow \infty$. 
%We will denote the covariate space along with assumption of 
%compactness (Assumption \ref{cov1}) by  $\mathfrak{X}$  and response space  by $\mathfrak{Y} \subset \Re$.  

For $i=1,\ldots,n$ %and $j=1,\ldots,m$, 
consider the following general model setup: %as \eqref{model} for $i \in \{1, 2, \cdots, n\}, j \in \{1, 2, \cdots, m\}$.
conditionally on $\theta$, $x_i$ and $\bY^{(i-1)}_j=\{y_{1j},\ldots,y_{i-1,j}\}$,
\begin{eqnarray}\label{model}
\begin{aligned}
	& y_{ij} \sim f\left(\cdot|\theta,x_i,\bY^{(i-1)}_j\right);~j=1,\ldots,m, % f_{\theta}\left(x_i\right), 
%& X_i\sim Q.
\end{aligned}
\end{eqnarray}
%and that for every $i\geq 1$, $y_{i1},\ldots,y_{im}$ are conditionally independent, given $\theta$ and $x_i$. 
independently, where $f(\cdot|\theta,x_1,\bY^{(0)})=f(\cdot|\theta,x_1)$ as before.

%\subsubsection{Examples of the inverse regression setup}
%\label{subsubsec:model_examples_inverse}
%For the inverse regression setup, the examples in Section \ref{subsubsec:model_examples_forward} can be generalized as follows:
%\begin{itemize}
%%	\item[(i)] $y_{ij}=\alpha+\beta x_i+\epsilon_{ij}$, where $\theta=(\alpha,\beta)$ and $\epsilon_{ij}$ are $iid$ Gaussian errors.
%\item[(i)] $y_{ij}\sim Bernoulli(p_i)$, where $p_i=H\left(\eta(x_i)\right)$, where $H$ is some appropriate link function and $\eta$ is some 
%function with known or unknown form. For known, suitably parameterized form, the model is parametric. If the form of $\eta$ is unknown, 
%one may model it by a Gaussian process, assuming adequate smoothness of the function.
%\item[(ii)] $y_{ij}\sim Poisson(\lambda_i)$, where $\lambda_i=H\left(\eta(x_i)\right)$, where $H$ is some appropriate link function and $\eta$ is 
%some function with known (parametric) or unknown (nonparametric) form. Again, in case of unknown form of $\eta$, 
%the Gaussian process can be used as a suitable model under sufficient smoothness
%assumptions.
%\item[(iii)] $y_{ij}=\eta(x_i)+\epsilon_{ij}$, where $\eta$ is a parametric or nonparametric function and
%$\epsilon_{ij}$ are $iid$ Gaussian errors. In particular, $\eta(x_i)$ may be a linear regression function, that is, $\eta(x_i)=\beta'x_i$, where
%$\beta$ is a vector of unknown parameters.
%Non-linear forms of $\eta$ are also permitted. 
%Also, $\eta$ may be a reasonably smooth function of unknown form, modeled by some appropriate Gaussian process.		
%\end{itemize}

\subsubsection{Prior for $\tilde x_i$}
\label{subsec:prior}
Following \ctn{Chat20}, we consider the following prior for $\tilde x_i$: given $\theta$,
\begin{equation}
	\tilde x_i\sim U\left(B_{im}(\theta)\right),
\label{eq:prior_x}
\end{equation}
the uniform distribution on 
\begin{equation}
	B_{im}(\theta)=\left(\left\{x:H\left(\eta(x)\right)\in \left[\bar y_i-\frac{cs_i}{\sqrt{m}},\bar y_i+\frac{cs_i}{\sqrt{m}}\right]\right\}\right),
\label{eq:set1}
\end{equation}
where $H$ is some suitable transformation of $\eta(x)$.
In (\ref{eq:set1}), $\bar y_i=\frac{1}{m}\sum_{j=1}^my_{ij}$ and $s^2_i=\frac{1}{m-1}\sum_{j=1}^m(y_{ij}-\bar y_i)^2$, and $c\geq 1$ is some constant.
We denote this prior by $\pi(\tilde x_i|\eta)$. \ctn{Chat20} show that the density or any probability 
associated with $\pi(\tilde x_i|\eta)$ is continuous with respect to $\eta$.
Quite importantly, the prior form (\ref{eq:prior_x}) leads to cross-validation posteriors that are consistent at $x_i$; see \ctn{Chat20}. 

\subsubsection{Examples of the prior}
\label{subsubsec:illustrations_prior}
\begin{itemize}
	\item[(i)] $y_{ij}\sim Poisson(\theta x_i)$, where $\theta>0$ and $x_i>0$ for all $i$. Here, under the prior $\pi(\tilde x_i|\theta)$, 
		$\tilde x_i$ has uniform distribution on the set 
		$B_{im}(\theta)=\left\{x>0:\frac{\bar y_i-\frac{cs_i}{\sqrt{m}}}{\theta}\leq x\leq \frac{\bar y_i+\frac{cs_i}{\sqrt{m}}}{\theta}\right\}$.
	\item[(ii)] $y_{ij}\sim Poisson(\lambda_i)$, where $\lambda_i=\lambda(x_i)$, with $\lambda(x)=H(\eta(x))$. Here $H$ is a known, one-to-one, 
		continuously differentiable function and $\eta(\cdot)$ is an unknown function modeled by Gaussian process.
		Here, the prior for $\tilde x_i$ is the uniform distribution on $$B_{im}(\eta)=\left\{x:\eta(x)\in 
		H^{-1}\left\{\left[\bar y_i-\frac{cs_i}{\sqrt{m}},\bar y_i+\frac{cs_i}{\sqrt{m}}\right]\right\}\right\}.$$
	\item[(iii)] $y_{ij}\sim Bernoulli(p_i)$, where $p_i=\lambda(x_i)$, with $\lambda(x)=H(\eta(x))$. Here $H$ is a known, increasing,
		continuously differentiable, cumulative distribution function and $\eta(\cdot)$ is an unknown function modeled by some appropriate Gaussian process.
		Here, the prior for $\tilde x_i$ is the uniform distribution on $B_{im}(\eta)=\left\{x:\eta(x)\in 
		H^{-1}\left\{\left[\bar y_i-\frac{cs_i}{\sqrt{m}},\bar y_i+\frac{cs_i}{\sqrt{m}}\right]\right\}\right\}$.
	\item[(iv)] $y_{ij}=\eta(x_i)+\epsilon_{ij}$, where $\eta(\cdot)$ is an unknown function modeled by some appropriate Gaussian process,
		and $\epsilon_{ij}$ are $iid$ zero-mean Gaussian noise with variance $\sigma^2$. 
		Here, the prior for $\tilde x_i$ is the uniform distribution on $B_{im}(\eta)=\left\{x:\eta(x)\in 
		\left[\bar y_i-\frac{cs_i}{\sqrt{m}},\bar y_i+\frac{cs_i}{\sqrt{m}}\right]\right\}$.
		If $\eta(x_i)=\alpha+\beta x_i$, then the prior for $\tilde x_i$ is the uniform distribution on $[a,b]$, where 
		%$\left[\frac{\bar y_i-\alpha}{\beta}-\frac{cs_i}{\sqrt{m}},\frac{\bar y_i-\alpha}{\beta}+\frac{cs_i}{\sqrt{m}}\right]$.
		$a=\min\left\{\frac{\bar y_i-\frac{cs_i}{\sqrt{m}}-\alpha}{\beta},\frac{\bar y_i+\frac{cs_i}{\sqrt{m}}-\alpha}{\beta}\right\}$
		and $b=\max\left\{\frac{\bar y_i-\frac{cs_i}{\sqrt{m}}-\alpha}{\beta},\frac{\bar y_i+\frac{cs_i}{\sqrt{m}}-\alpha}{\beta}\right\}$.
\end{itemize}
Further examples of the prior in various other inverse regression models are provided in \ctn{Chat20a}; see also Sections \ref{sec:simstudy_ms} and \ref{sec:simstudy_vs}.
In this article, we shall throughout assume that the space of covariates $\mathcal X$ is compact.

\section{A multiple testing framework for model selection in inverse regression problems}
\label{sec:mult_inv}
Let us consider models $\mathcal M_k$; $k=1,\ldots,K$, from among which the best model needs to be selected respecting the inverse perspective. 
%Here $K>1$; in fact, we allow $K=\infty$ as well. 
In this article, we assume that $1< K<\infty$.
We allow the provision that the true, data-generating model is not contained in the set of models being considered.
For $k=1,\ldots,K$, let $\theta_k$ and $\Theta_k$ denote the parameter set and the parameter space associated with model $\mathcal M_k$. 
Let $\pi(\theta_k|\mathcal M_k)$ denote the prior for $\theta_k$ under model $\mathcal M_k$.

For our multiple testing treatise, we shall consider the second inverse regression setup detailed in Section \ref{subsec:inverse2}. As such, for
$n>1$ and $m>1$, let $\bY_{nm}$ be generated from the marginal distribution of $\mathcal M_0$, the true model having parameters $\theta_0$ with prior 
$\pi(\theta_0|\mathcal M_0)$ on parameter space $\Theta_0$. Note that $\pi(\theta_0|\mathcal M_0)$ may even be the point mass on some element of $\Theta_0$.
The dimensions of the parameter spaces $\Theta_0,\Theta_1,\ldots,\Theta_K$ may all be different. We shall consider the consistent prior for $\tilde x_i$ detailed
in Section \ref{subsec:prior}.

%Letting $$f(\bY_n|\theta_k,\mathcal M_k)=\int_{\mathcal X^n}f(\bY_n|\tilde\bX_n,\theta_k,\mathcal M_k)d\pi(\tilde \bX_n|\theta_k,\mathcal M_k),$$ 
Now, for $k=1,\ldots,K$, let $f(\bY_{nm}|\bX_n,\theta_k,\mathcal M_k)$ denote the density of $\bY_{nm}$ under model $\mathcal M_k$. 
We combine the competing models in the following mixture form:
\begin{equation}
	f(\bY_{nm}|\bX_n,\theta)=\sum_{k=1}^Kp_kf(\bY_{nm}|\bX_n,\theta_k,\mathcal M_k),
	\label{eq:mix1}
\end{equation}
where $\theta=(\theta_1,\ldots,\theta_K)$, $0\leq p_k\leq 1$, for $k=1,\ldots,K$ and $\sum_{k=1}^Kp_k=1$.
Letting $\zeta$ denote the allocation variable (model index), with $P(\zeta=k)=p_k$, note that $f(\bY_{nm}|\bX_n,\theta,\zeta=k)=f(\bY_{nm}|\bX_n,\theta_k,\mathcal M_k)$. 
Now let $\tilde\Theta_k$ be a proper subset of $\Theta_k$ assumed to contain the minimizer of the KL-divergence from the true model $\mathcal M_0$.  
%The true model $\mathcal M_0$ may be of the form $f(\bY_n|\bX_n,\theta_0,\mathcal M_0)$ or 
%$$f(\bY_n|\theta_0,\mathcal M_0)=\int_{\mathcal X^n}f(\bY_n|\tilde\bX_n,\theta_0,\mathcal M_0)d\pi(\tilde \bX_n|\theta_0,\mathcal M_0).$$

Let $\pi(\tilde x_i|\theta_k,\mathcal M_k)$ be the prior for $\tilde x_i$ given $\theta_k$, under $\mathcal M_k$.
This yields the familiar (see, for example, \ctn{Bhatta07}, \ctn{Chat20}) inverse cross-validation posterior for $\tilde x_i$ given $\bX_{n,-i}$ and $\bY_{nm}$ given by
\begin{equation*}
	\pi(\tilde x_i|\bX_{n,-i},\bY_{nm},\mathcal M_k)=\int_{\Theta_k}\pi(\tilde x_i|\theta_k,\by_i,\mathcal M_k)d\pi(\theta_k|\bX_{n,-i},\bY_{nm}).
	%\label{eq:cv_post2}
\end{equation*}
However, if $\theta_k$ is restricted to $\tilde\Theta_k$, then we obtain the following $\tilde\Theta_k$-restricted
inverse cross-validation posterior for $\tilde x_i$ given $\bX_{n,-i}$ and $\bY_n$:
\begin{equation}
	\pi(\tilde x_i|\bX_{n,-i},\bY_{nm},\mathcal M_k,\tilde\Theta_k)
	=\frac{\int_{\tilde\Theta_k}\pi(\tilde x_i|\theta_k,\by_i,\mathcal M_k)d\pi(\theta_k|\bX_{n,-i},\bY_{nm})}{\pi(\tilde\Theta_k|\bX_{n,-i},\bY_{nm})}.
	\label{eq:cv_post2}
\end{equation}
In the misspecified situation, $\theta_0\notin\Theta_{k}$, and $\tilde\theta_{k}$ is the minimizer of the limiting KL-divergence rate from $\mathcal M_0$.
Thus, in the case of misspecification of $\theta_k$, $B_{im}(\tilde\theta_{k})\stackrel{a.s.}{\longrightarrow}\{x^*_{ik}\}$ 
as $m\rightarrow\infty$, for some non-random $x^*_{ik}~(\neq x_i)$, depending upon model $\mathcal M_k$. 
In other words, the prior distribution of $\tilde x_i$ given $\tilde\theta_{k}$ and $\by_i$ 
concentrates around $x^*_{ik}$, as $m\rightarrow\infty$. In Theorem \ref{theorem:inv_cons} we show that the cross-validation posterior of 
$\tilde x_i$ also concentrates around $x^*_{ik}$.
Note that $x^*_{ik}$ depends upon both $\tilde\theta_{k}$ and $\theta_0$, apart from $x_i$ (and perhaps $x_j$ for some $j\neq i$).

For any $n$-dimensional vector $\bv_n=(v_1,\ldots,v_n)$, and for some $c>0$, define
\begin{align}
	T^{(k)}_1(\bv_n) &=\frac{1}{n} \sum_{i=1}^n\frac{\left|v_i - E(\tilde x_i|\bX_{n,-i},\bY_{nm},\mathcal M_k,\tilde\Theta_k)\right|}
	{\sqrt{Var(\tilde x_i|\bX_{n,-i},\bY_{nm},\mathcal M_k,\tilde\Theta_k)+c}}.\label{eq:T1}
	%T_1(\tilde\bX_n) &=\frac{1}{n} \sum_{i=1}^n\frac{(\tilde x_i - E_{\pi}(\tilde x_i))^2}{V_{\pi}(\tilde x_i)+c},\label{eq:T1_1}\\
	%T_1(\bX_n) &=\frac{1}{n} \sum_{i=1}^n\frac{(x_i - E_{\pi}(\tilde x_i))^2}{V_{\pi}(\tilde x_i)+c},\label{eq:T1_2}\\
	%T_1(\bX^*_n) &=\frac{1}{n} \sum_{i=1}^n\frac{(x^*_i - E_{\pi}(\tilde x_i))^2}{V_{\pi}(\tilde x_i)+c}.\label{eq:T1_3}
\end{align}
Similarly, let
\begin{align}
	T^{(k)}_2(\bv_n) &=\frac{1}{n} \sum_{i=1}^n\frac{(v_i - E(\tilde x_i|\bX_{n,-i},\bY_{nm},\mathcal M_k,\tilde\Theta_k))^2}
	{Var(\tilde x_i|\bX_{n,-i},\bY_{nm},\mathcal M_k,\tilde\Theta_k)+c}.\label{eq:T2}
	%T_2(\tilde\bX_n) &=\frac{1}{n} \sum_{i=1}^n\frac{\left|\tilde x_i - E_{\pi}(\tilde x_i))\right|}{\sqrt{V_{\pi}(\tilde x_i)+c}},\label{eq:T2_1}\\
	%T_2(\bX_n) &=\frac{1}{n} \sum_{i=1}^n\frac{\left|x_i - E_{\pi}(\tilde x_i)\right|}{\sqrt{V_{\pi}(\tilde x_i)+c}},\label{eq:T2_2}\\
	%T_2(\bX^*_n) &=\frac{1}{n} \sum_{i=1}^n\frac{\left|x^*_i - E_{\pi}(\tilde x_i)\right|}{\sqrt{V_{\pi}(\tilde x_i)+c}}.\label{eq:T2_3}
\end{align}
In (\ref{eq:T1}) and (\ref{eq:T2}), $\tilde x_i$ has the cross-validation posterior distribution (\ref{eq:cv_post2}), for $i=1,\ldots,n$.
The positive constant $c$ is not only needed for asymptotics, it plays the role of maintaining stability of the discrepancy measures when 
$Var(\tilde x_i|\bX_{n,-i},\bY_{nm},\mathcal M_k,\tilde\Theta_k)$ is close to zero for some $i\geq 1$.
Various other measures of discrepancy can be defined (see \ctn{Bhattacharya13} for a discussion on such discrepancy measures; see also \ctn{Sabya13}), 
but for brevity we focus on these two measures in this paper.

For a given discrepancy measure $T^{(k)}$, 
let $[\tilde\ell_{knm},\tilde u_{knm}]$ denote the $100(1-\alpha)\%$ credible interval for the posterior distribution of $T^{(k)}(\tilde\bX_n)$ for any desired $\alpha\in (0,1)$.
In Theorem \ref{theorem:consistency_T} we show that for any $\varepsilon>0$, the posterior probability of the event
$$\left\{T^{(k)}(\tilde \bX_n)-T^{(k)}(\bX_n)\in [\tilde\ell_{knm}-a_{k}-\varepsilon,\tilde u_{knm}-a_k+\varepsilon]\right\}$$ tends to one almost surely as $m\rightarrow\infty$
and $n\rightarrow\infty$. Here $a_k$ are positive constants reflecting misspecification. If there is no misspecification, then $a_k=0$.

With the above notions and ideas it seems reasonable to formulate the following multiple testing problem for inverse model selection. 
For given $\varepsilon>0$ and $\eta>0$, and given discrepancy measure $T^{(k)}$
associated with model $\mathcal M_k$,
for $k=1,\ldots,K$, consider testing
\begin{equation*}
	H_{0k}:p_k>1-\eta,\theta_k\in\tilde\Theta_k,
	%\frac{\left|T^{(k)}(\tilde \bX_n)-T^{(k)}(\bX_n)\right|}{\sqrt{Var\left(T^{(k)}(\tilde \bX_n)|\bX_n,\bY_{nm},\mathcal M_k,\tilde\Theta_k\right)}}
	T^{(k)}(\tilde \bX_n)-T^{(k)}(\bX_n) \in [\tilde\ell_{knm}-a_k-\varepsilon, \tilde u_{knm}-a_k+\varepsilon]
	%\label{eq:H0_1}
\end{equation*}
versus
\begin{align*}
	&H_{1k}:\left\{p_k\leq 1-\eta\right\}\bigcup\left\{p_k>1-\eta,\theta_k\in\tilde\Theta^c_k\right\}\notag\\
	&\qquad\qquad\bigcup\left\{p_k>1-\eta,\theta_k\in\tilde\Theta_k,
	%\frac{\left|T^{(k)}(\tilde \bX_n)-T^{(k)}(\bX_n)\right|}{\sqrt{Var\left(T^{(k)}(\tilde \bX_n)|\bX_n,\bY_{nm},\mathcal M_k,\tilde\Theta_k\right)}}
	%\in (a_k-\varepsilon,a_k+\varepsilon)^c\right\}.
	T^{(k)}(\tilde \bX_n)-T^{(k)}(\bX_n) \in [\tilde\ell_{knm}-a_k-\varepsilon, \tilde u_{knm}-a_k+\varepsilon]^c\right\}.
	%\label{eq:H1_1}
\end{align*}
The positive constants $a_k$ in the hypotheses should be perceived as analogous to $a_{1k}$ and $a_{2k}$ in (\ref{eq:T1_bound}) and (\ref{eq:T2_bound}).

However, the above multiple testing formulation depends upon the choice of $\eta$. More importantly, even though the posterior probability of $\zeta=\tilde k$ goes
to $1$ asymptotically for the best model $\mathcal M_{\tilde k}$, that of $\left\{p_k>1-\eta\right\}$, for any $\eta>0$, does not tend to one for any prior on $(p_1,\ldots,p_K)$.
For example, for a Dirichlet prior with parameters $(\alpha_1,\ldots,\alpha_K)$, where $\alpha_k>0$ for $k=1,\ldots,K$, the posterior distribution
of $(p_1,\ldots,p_K)$ given $\zeta$, the other parameters and the data, is Dirichlet with parameters 
$(\alpha_1+I(\zeta=1),\ldots,\alpha_K+I(\zeta=K))$, where for any $k$, $I(\zeta=k)=1$ if $\zeta=k$ and zero otherwise. Thus, even if $\zeta=\tilde k$ with posterior
probability tending to one, asymptotically the posterior distribution of $p_{\tilde k}$ does not converge to one. It is thus necessary to modify the above multiple testing
formulation, replacing the statements involving $p_k$ with those involving $\zeta$. Specifically, we re-write the hypotheses as follows:
\begin{equation}
	H_{0k}:\zeta=k,\theta_k\in\tilde\Theta_k,
	%\frac{\left|T^{(k)}(\tilde \bX_n)-T^{(k)}(\bX_n)\right|}{\sqrt{Var\left(T^{(k)}(\tilde \bX_n)|\bX_n,\bY_{nm},\mathcal M_k,\tilde\Theta_k\right)}}
	%\in (a_k-\varepsilon, a_k+\varepsilon)
	T^{(k)}(\tilde \bX_n)-T^{(k)}(\bX_n) \in [\tilde\ell_{knm}-a_k-\varepsilon, \tilde u_{knm}-a_k+\varepsilon]
	\label{eq:H0}
\end{equation}
versus
\begin{align}
	&H_{1k}:\left\{\zeta\neq k\right\}\bigcup\left\{\zeta=k,\theta_k\in\tilde\Theta^c_k\right\}\notag\\
	&\qquad\qquad\bigcup\left\{\zeta=k,\theta_k\in\tilde\Theta_k,
	%\frac{\left|T^{(k)}(\tilde \bX_n)-T^{(k)}(\bX_n)\right|}{\sqrt{Var\left(T^{(k)}(\tilde \bX_n)|\bX_n,\bY_{nm},\mathcal M_k,\tilde\Theta_k\right)}}
	%\in (a_k-\varepsilon,a_k+\varepsilon)^c\right\}.
	T^{(k)}(\tilde \bX_n)-T^{(k)}(\bX_n) \in [\tilde\ell_{knm}-a_k-\varepsilon, \tilde u_{knm}-a_k+\varepsilon]^c\right\}.
	\label{eq:H1}
\end{align}
Henceforth, unless stated otherwise, we shall refer to (\ref{eq:H0}) and (\ref{eq:H1}) for our multiple testing purpose.

\subsection{Further discussion of the multiple testing formulation} 
\label{subsec:key}
%The idea behind the multiple testing formulation (\ref{eq:H0}) and (\ref{eq:H1}) is as follows. 
To select the best model from an inverse perspective 
we first need to choose a model $f(\bY_{nm}|\bX_n,\theta_{\tilde k},\mathcal M_{\tilde k})$ indexed by $\zeta=\tilde k$ 
which has high marginal posterior probability. But this is not enough
as the inverse context is not reflected in this selection. Indeed, such a selection is the same as in the forward context.
%the only difference being that the competing forward models are of the form $f(\bY_n|\bX_n,\theta_k,\mathcal M_k)$, where $\bX_n$ are retained.

Thus, in addition to selecting such a $\tilde k$, we demand that for such model 
\begin{equation}
	%\frac{\left|T^{(k)}(\tilde \bX_n)-T^{(k)}(\bX_n)\right|}{\sqrt{Var\left(T^{(k)}(\tilde \bX_n)|\bX_n,\bY_{nm},\mathcal M_k,\tilde\Theta_k\right)}}
	%\in (a_k-\varepsilon, a_k+\varepsilon).
	T^{(k)}(\tilde \bX_n)-T^{(k)}(\bX_n) \in [\tilde\ell_{knm}-a_k-\varepsilon, \tilde u_{knm}-a_k+\varepsilon].
	\label{eq:inv_test1}
\end{equation}
%holds in the same way as proposed in \ctn{Bhattacharya13}. 
This reflects the inverse perspective.
We further demand that this holds for $\tilde\bX_n$ associated with some region $\tilde\Theta_{\tilde k}$ 
of the parameter space that contains the minimizer
of the KL-divergence of $f(\bY_{nm}|\bX_n,\theta_{\tilde k},\mathcal M_{\tilde k})$ from the true model. 
The reason for this is that $\tilde\Theta_{\tilde k}$ 
is the region that has the highest 
posterior probability, at least asymptotically, which we shall subsequently establish.
Moreover, it follows from \ctn{Chat20} that $\pi(\theta_k|\bX_n,\bY_{nm},\mathcal M_k)$ and $\pi(\theta_k|\bX_{n,-i},\bY_{nm},\mathcal M_k)$ 
are asymptotically the same for any $i\geq 1$, for any $m\geq 1$.
Hence the event (\ref{eq:inv_test1}) associated with $\tilde\Theta_{\tilde k}$ for $k=\tilde k$, 
is expected to be reliable. 

We shall also show that asymptotically the posterior probability of the best model, $\zeta=\tilde k$, tends to $1$ almost surely.
As already mentioned, here the notion the best model is with respect to minimization of the minimum KL-divergence rate from the true model. 
We shall show that for this $\tilde k$, 
the posterior probability of $H_{0\tilde k}$ goes to $1$ asymptotically, for any $\varepsilon>0$ in (\ref{eq:inv_test1}).
%Here note that any $\varepsilon>0$ need not lead to full posterior probability of $H_{0\tilde k}$ asymptotically. This is because the best model need not be the true model
%and so the posterior probability of (\ref{eq:inv_test1}) need not tend to zero for all $\varepsilon>0$. In fact, unless we consider the second inverse setup
%where $m$ observations are available corresponding to each covariate value, this does not hold good even when the best model is the true model.
That is, asymptotically, only one inverse model, namely, the best inverse model satisfying the conditions of $H_{0\tilde k}$, will be selected.

It is useful to remark here that the KL-divergence rate referred to above is completely in the forward sense, where all the $x_i$; $i\geq 1$, are assumed to be known.
Hence, the above arguments and our subsequent theoretical underpinnings show that the asymptotic theory is dominated by the forward perspective. 
In fact, any consistent prior for $\tilde x_i$ would asymptotically lead to the best forward model.    
However, the above can not be guaranteed in any non-asymptotic sense. The model $\mathcal M_{\tilde k}$ with high posterior probability of $\{\zeta=\tilde k\}$
may have low posterior probability of $T^{(k)}(\tilde \bX_n)-T^{(k)}(\bX_n) \in [\tilde\ell_{knm}-a_k-\varepsilon, \tilde u_{knm}-a_k+\varepsilon]$, which may
result in overall lower posterior probability of $H_{0\tilde k}$ compared to $H_{0k}$ for several $k\neq \tilde k$. 
In such situations, $\mathcal M_{\tilde k}$ will not be the best choice non-asymptotically.
Thus, the inverse perspective is particularly important in realistic, non-asymptotic situations.
An appropriate Bayesian multiple testing procedure is expected to yield the best possible
inference regarding inverse model selection in both asymptotic and non-asymptotic situations, which we now devise. 
%In this regard, we shall formulate a Bayesian multiple testing method and study its consistency properties and asymptotic
%theory of the error rates.

\subsection{The Bayesian multiple testing procedure} 
\label{subsec:muller}
%Let $\bX_n=\{X_1,\ldots,X_n\}$ denote the available data set. Suppose the data is modelled by the family of distributions $P_{\bX_n|\btheta}$ (which may also be non-parametric). 
%For $M>1$, let us denote by $\bTheta=\Theta_1\times\cdots\times\Theta_M$ 
%the relevant parameter space associated with $\btheta=(\theta_1,\ldots,\theta_M)$, where we allow $M$ to be infinity as well. 
%Let $\postp(\cdot)$ and $\pexp(\cdot)$ denote the posterior distribution and expectation respectively of $\btheta$ given $\bX_n$ and let $P_{\bX_n}(\cdot)$ and $E_{\bX_n} (\cdot)$ denote the marginal distribution and expectation of $\bX_n$ respectively. Let us consider the problem of testing $m$ hypotheses simultaneously corresponding to the actual parameters of interest, where
%$1<m\leq M$. In this work, however, we assume $m$ to be finite. %The case of infinite $m$ will be communicated elsewhere.

%Without loss of generality, let us consider testing the parameters associated with $\Theta_i$; $i=1,\ldots,m$,
%formalized as:
%$$ H_{0i}:\theta_i \in \Theta_{0i}  \hbox{ versus } H_{1i} : \theta_i \in \Theta_{1i},$$ 
%where $\Theta_{0i} \bigcap \Theta_{1i}=\emptyset \mbox{ and } \Theta_{0i} \bigcup \Theta_{1i} 
%= \Theta_{i},\mbox{ for $i=1,\cdots,m$}.$

\ctn{Chandra19} proposed a novel Bayesian non-marginal testing procedure for testing general dependent hypotheses. We first briefly discuss their method and then
consider a special case of their idea to be applied to inverse model selection context.

Let
\begin{align*}
d_k=&\begin{cases}
1&\text{if the $k$-th hypothesis is rejected;}\\
0&\text{otherwise;}
\end{cases}\\
r_k=&\begin{cases}
1&\text{if $H_{1k}$ is true;}\\
0&\text{if $H_{0k}$ is true.} 
\end{cases}
\end{align*}

%In many real life situations, dependent prior structure is envisaged on the parameter space based on available domain knowledge. For example in spatial statistics, Gaussian process prior is often considered. In fMRI data, Gaussian Markov random field prior is a common prior. In such cases, the additional information on the parameters are incorporated in the model through the prior distribution. Various applications in recent times in fields as diverse as spatio-temporal statistics, neurosciences, biological sciences, engineering, environmental and ecological sciences, astrostatistics, psychometrics, demography, geostatistics, 
%reliability engineering, statistical signal processing, statistical physics, finance, actuarial science, to name only a few, consider Bayesian models with dependent prior structures. The basic idea behind the new multiple testing methodology is to incorporate such information, when available, in the testing procedure to obtain improved decision rule. This principle is in accordance with the traditional Bayesian philosophy
%that when prior information is available, inference can be enhanced. 
%dependent prior is considered based on domain knowledge. Our proposal is to exploit this additional information in the multiple testing methodology to enhance inference. 

Let $G_k$ be the set of hypotheses (including hypothesis $k$) where the parameters are %a priori
dependent on the $k$-th hypothesis. In the new procedure, the decision of each hypothesis is penalized by incorrect decisions regarding other dependent parameters. Thus a compound criterion where all the decisions in $G_k$ deterministically depends upon each other. 
Define the following quantity
\begin{equation}
z_k=\begin{cases}
1&\mbox{if $H_{d_j,j}$ is true for all $j\in G_k\setminus\{k\}$;}\\
0&\mbox{otherwise.}
\end{cases}\label{eq:z}
\end{equation}
If, for any $k\in\{1,\ldots,K\}$, $G_k=\{k\}$, a singleton, then we define $z_k=1$.
The notion of true positives $(TP)$ are modified as the following
%\ctn{chandra2017} maximize the posterior expectation of the number of true positives
\begin{equation}
TP=\sum_{k=1}^Kd_kr_kz_k,
\label{eq:tp}
\end{equation}
The posterior expectation of $TP$ is maximized subject to controlling the posterior expectation of the error term
\begin{equation}
E=\sum_{k=1}^Kd_k(1-r_kz_k).
\label{eq:e}
\end{equation}
%which is actually the posterior mean of the sum of three error terms $E_1=\sum_{i=1}^md_i(1-r_i)z_i$, $E_2=\sum_{i=1}^md_i(1-r_i)(1-z_i)$ and $E_3=\sum_{i=1}^md_ir_i(1-z_i)$. For detailed discussion regarding these, see \ctn{chandra2017}.
It follows that the decision configuration can be obtained by minimizing the function
\begin{align}
	\xi(\bd)&=-\sum_{k=1}^Kd_kE(r_kz_k|\bX_n,\bY_{nm})+\lambda_{nm}\sum_{k=1}^Kd_k E\left[ (1-r_kz_k)|\bX_n,\bY_{nm}\right]\notag\\
	&= -(1+\lambda_{nm})\sum_{k=1}^Kd_k\left(w_{knm}(\bd)-\frac{\lambda_{nm}}{1+\lambda_{nm}}\right),\notag
\end{align}
with respect to all possible decision configurations of the form $\bd=\{d_1,\ldots,d_K\}$, where
$\lambda_{nm}>0$,
and
\begin{equation*}
	w_{knm}(\bd)=E(r_kz_k|\bX_n,\bY_{nm})= \pi\left(H_{1k}\cap\left\{\cap_{j\neq k,j\in G_k}H_{d_j,j}\right\}\big | \bX_n,\bY_{nm}\right)
%\label{eq:w}
\end{equation*}
is the posterior probability of the decision configuration $\{d_1,\ldots,d_{k-1},1,d_{k+1},\ldots,d_K\}$
being correct.
Letting $\beta_{nm}=\lambda_{nm}/(1+\lambda_{nm})$, one can equivalently maximize
\begin{equation}
	f_{\beta_{nm}}(\bd)=\sum_{k=1}^K d_k\left(w_{knm}(\bd)-\beta_{nm}\right)\label{eq:beta1}
\end{equation}
with respect to $\bd$ and obtain the optimal decision configuration.

\begin{definition}
	Let $\mathbb D$ be the set of all $m$-dimensional binary vectors denoting all possible decision configurations. Define $$\widehat{\bd}=\argmax_{\bd\in\mathbb{D}} f_\beta(\bd)$$ where $0<\beta<1$. Then $\widehat{\bd}$ is the \textit{optimal decision configuration} obtained as the solution of the non-marginal multiple testing method.
	\label{def:nmd}
\end{definition}

Note that in the definitions of both $TP$ and $E$, $d_i$ is penalized by incorrect decisions in the same group. 
This forces the decisions to be jointly taken also adjudging other dependent parameters. 

\subsection{Specialization of the general multiple testing procedure to inverse model selection problems}
\label{subsec:specialization}
In our inverse model selection problem note that since the models $\mathcal M_k$; $k=1,\ldots,K$, are independent, so are $\tilde \bX_n$ associated with the different models. 
Thus, the hypotheses are dependent only through the relation $\sum_{k=1}^KI(\zeta=k)=1$. As we shall show, the posterior probability of the event
$\{\zeta=\tilde k\}$ converges to one {\it a posteriori} as the sample size tends
to infinity, irrespective of any other dependence among $(I(\zeta=1),\ldots,I(\zeta=K))$ induced through $(p_1,\ldots,p_K)$. 
Hence, there is not enough reason to consider the hypotheses as dependent.
%and construct the groups $G_k$ accordingly consisting of the hypotheses dependent upon the $k$-th hypothesis. 
Thus, for our purpose, we simply set $G_k=\{k\}$.
Consequently, (\ref{eq:beta1}) in our case reduces to 
\begin{equation}
	f_{\beta_{nm}}(\bd)=\sum_{k=1}^K d_k\left(v_{knm}-\beta_{nm}\right),\label{eq:beta2}
\end{equation}
where
\begin{equation*}
	v_{knm}=E(r_k|\bX_n,\bY_{nm})= \pi\left(H_{1k}|\bX_n,\bY_{nm}\right).
%\label{eq:w}
\end{equation*}
In this case, the optimal decision configuration $\widehat\bd$ is given by the following: for $k=1,\ldots,K$,
\begin{equation}
\widehat d_k=\begin{cases}
	1&\text{if $v_{knm}>\beta_{nm}$;}\\
0&\text{otherwise.} 
\end{cases}
	\label{eq:optimal_decision}
\end{equation}
Hence, although our formulation of the multiple hypothesis test for inverse model selection is novel, the Bayesian procedure for testing parallels
that of \ctn{muller04} (see also \ctn{Guindani09}), which is a special case of the general procedure proposed in \ctn{Chandra19}.

\subsection{Error measures in multiple testing}
\label{subsec:Bayesian_errors}

\ctn{storey03} advocated \textit{positive False Discovery Rate} $(pFDR)$ as a measure of Type-I error in multiple testing. 
Let $\delta(\bd|\bX_n,\bY_{nm})$ be the probability of choosing $\bd$ as the optimal decision configuration given data 
$(\bX_n,\bY_{nm})$ when a given multiple testing method is employed. Then $pFDR$ is defined as:
\begin{equation}
	pFDR_{nm}=E_{\bY_{nm}|\bX_n} \left[ \sum_{\bd\in\mathbb{D}}  
	\frac{\sum_{k=1}^{K}d_k(1-r_k)}{\sum_{k=1}^{K}d_i}\delta(\bd|\bX_n,\bY_{nm})\bigg{|}\delta(\bd=\mathbf{0}|\bX_n,\bY_{nm})=0 \right].
\label{eq:pfdr}
\end{equation}

Analogous to Type-II error, the \textit{positive False Non-discovery Rate} $(pFNR)$ is defined as
\begin{align}
	pFNR_{nm}= E_{\bY_{nm}|\bX_n}\left[\sum_{\bd\in\mathbb D} \frac{\sum_{k=1}^K(1-d_k)r_k} {\sum_{k=1}^K(1-d_k)} \delta\left(\bd|\bX_n,\bY_{nm}\right)
	\bigg | \delta\left(\bd=\bone|\bX_n,\bY_{nm}\right)=0\right].
\label{eq:pFNR}
\end{align}

Under prior $\pi(\cdot)$, \ctn{SanatGhosh08} defined posterior $FDR$ and $FNR$. The measures are given as following:
\begin{align}
	posterior~FDR_{nm}
	&= E\left[\sum_{\bd\in\mathbb D}\frac{\sum_{k=1}^Kd_k(1-r_k)}{\sum_{k=1}^Kd_k \vee 1}\delta\left(\bd|\bX_n,\bY_{nm}\right)\bigg |\bX_n,\bY_{nm} \right]\\
	&= \sum_{\bd\in\mathbb{D}} 
	\frac{\sum_{k=1}^{K}d_k(1-v_{knm})}{\sum_{k=1}^{K}d_k \vee 1}\delta(\bd|\bX_n,\bY_{nm}); \label{eq:pBFDR}\\
	posterior~FNR_{nm}
	&= E\left[\sum_{\bd\in\mathbb D} \frac{\sum_{k=1}^K(1-d_k)r_k} {\sum_{k=1}^K(1-d_k)\vee 1} \delta\left(\bd|\bX_n,\bY_{nm}\right)\bigg |\bX_n,\bY_{nm}\right]\\
	&=\sum_{\bd\in\mathbb D}
	\frac{\sum_{k=1}^{K}(1-d_k)v_{knm}}{\sum_{k=1}^{K}(1-d_k)\vee 1}\delta(\bd|\bX_n,\bY_{nm}).\label{eq:pBFNR}
\end{align}
Also under any non-randomized decision rule, $\delta(\bd|\bX_n,\bY_{nm})$ is either 1 or 0 depending on data $(\bX_n,\bY_{nm})$. 
Given $(\bX_n,\bY_{nm})$, we denote these error measures conditional on the data by conditional $FDR$ ($cFDR_{nm}$) and conditional $FNR$ ($cFNR_{nm}$) respectively.

The positive Bayesian $FDR$ ($pBFDR_{nm}$) and $FNR$ ($pBFNR_{nm}$) are the expectations of $cFDR_{nm}$ and $cFNR_{nm}$ respectively,
with respect to the distribution of $\bY_{nm}$ given $\bX_n$.

For our Bayesian purpose, we shall consider the Bayesian measures $cFDR_{nm}$, $pBFDR_{nm}$, $cFNR_{nm}$ and $pBFNR_{nm}$, 
and investigate their asymptotic properties. \ctn{Chandra19} and \ctn{Chandra20} particularly recommend $cFDR_{nm}$ and $cFNR_{nm}$,
since they are conditioned on the observed data $(\bX_n,\bY_{nm})$ and hence qualify as {\it bona fide} Bayesian measures.

Let us now proceed towards development of the asymptotic theory for our proposed multiple testing strategy. The issue of misspecification will play a crucial role
in this context.
Suppose that the true data-generating parameter $\theta_0$ is not contained in $\Theta$, the parameter space considered.
This is a case of misspecification that we must incorporate in our asymptotic theory. Indeed, we shall build a general asymptotic framework that allows for 
possibly infinite-dimensional parameters, dependent data as well as misspecification. In this regard, the approach presented in \ctn{Shalizi09} seems to be very appropriate.
Before proceeding further, we first provide a brief overview of this approach, which we conveniently exploit for our purpose.

\section{A brief overview of Shalizi's approach to posterior convergence}
\label{sec:shalizi_briefing}
Let $\bY_n=\{Y_1,\ldots,Y_n\}$, and let $f_{\theta}(\bY_n)$ and $f_{\theta_0}(\bY_n)$ denote the observed and the true likelihoods respectively, under the given value of the parameter $\theta$
and the true parameter $\theta_0$. We assume that $\theta\in\Theta$, where $\Theta$ is the (often infinite-dimensional) parameter space. However, we {\it do not} 
assume that $\theta_0\in\Theta$, thus allowing misspecification.
%note that
%\begin{align}
%f_{\theta}(\bY_n)&=\frac{1}{\left(\sigma\sqrt{2\pi}\right)^n}\exp\left\{-\frac{1}{2\sigma^2}\sum_{i=1}^n(Y_i-\eta(\bx_i))^2\right\};\label{eq:like1}\\
%f_{\theta_0}(\bY_n)&=\frac{1}{\left(\sigma_0\sqrt{2\pi}\right)^n}\exp\left\{-\frac{1}{2\sigma^2_0}\sum_{i=1}^n(Y_i-\eta_0(\bx_i))^2\right\}.\label{eq:true_like1}
%\end{align}
The key ingredient associated with Shalizi's approach to proving convergence of the posterior distribution of $\theta$ is to show that the 
asymptotic equipartition property holds.
To elucidate, let us consider the following likelihood ratio:
\begin{equation*}
R_n(\theta)=\frac{f_{\theta}(\bY_n)}{f_{\theta_0}(\bY_n)}.
%\label{eq:R_n}
\end{equation*}
Then, to say that for each $\theta\in\Theta$, the generalized or relative asymptotic equipartition property holds, we mean
\begin{equation}
\underset{n\rightarrow\infty}{\lim}~\frac{1}{n}\log R_n(\theta)=-h(\theta),
\label{eq:equipartition}
\end{equation}
almost surely, where
$h(\theta)$ is the KL-divergence rate given by
\begin{equation}
h(\theta)=\underset{n\rightarrow\infty}{\lim}~\frac{1}{n}E_{\theta_0}\left(\log\frac{f_{\theta_0}(\bY_n)}{f_{\theta}(\bY_n)}\right),
\label{eq:S3}
\end{equation}
provided that it exists (possibly being infinite), where $E_{\theta_0}$ denotes expectation with respect to the true model.
Let
\begin{align}
h\left(A\right)&=\underset{\theta\in A}{\mbox{ess~inf}}~h(\theta);\label{eq:h2}\\
J(\theta)&=h(\theta)-h(\Theta);\label{eq:J}\\
J(A)&=\underset{\theta\in A}{\mbox{ess~inf}}~J(\theta).\label{eq:J2}
\end{align}
Thus, $h(A)$ can be roughly interpreted as the minimum KL-divergence between the postulated and the true model over the set $A$. If $h(\Theta)>0$, this indicates
model misspecification. 
%However, as we shall show, model misspecification need not always imply that $h(\Theta)>0$. 
For $A\subset\Theta$, $h(A)>h(\Theta)$, so that $J(A)>0$.

As regards the prior, it is required to construct an appropriate sequence of sieves $\mathcal G_n$ such that $\mathcal G_n\rightarrow\Theta$ and $\pi(\mathcal G^c_n)\leq\alpha\exp(-\beta n)$,
for some $\alpha>0$. 

With the above notions, verification of (\ref{eq:equipartition}) along with several other technical conditions ensure that for any $A\subseteq\Theta$ such that $\pi(A)>0$, 
\begin{equation}
\underset{n\rightarrow\infty}{\lim}~\pi(A|\bY_n)=0,
\label{eq:post_conv1}
\end{equation}
almost surely, provided that $h(A)>h(\Theta)$.

The seven assumptions of Shalizi leading to the above result, which we denote as (S1)--(S7), are provided in Appendix \ref{subsec:assumptions_shalizi}.
In what follows, we denote almost sure and in probability convergence by ``$\stackrel{a.s.}{\longrightarrow}$" and ``$\stackrel{P}{\longrightarrow}$", respectively, 
almost sure equality by ``$\stackrel{a.s.}{=}$" and weak convergence
by ``$\stackrel{w}{\longrightarrow}$".

\section{Asymptotic properties of the posterior probabilities of the alternative hypotheses}
\label{sec:asymp_v}

\subsection{Posterior convergence to the best model}
\label{subsec:conv_best_model}
\begin{theorem}
	\label{theorem:bf1}
	Assume that for $k=1,\ldots,K$, $\mathcal M_k$ satisfies conditions (S1)--(S6) of Shalizi, and that the competing models as well as the true model
	have densities with respect to some common $\sigma$-finite measure.
	Also assume that the posterior associated with $\mathcal M_k$ is dominated by the prior, which is again absolutely continuous
	with respect to some appropriate $\sigma$-finite measure, and that the priors satisfy $\pi(\theta_k|\mathcal M_k)>0$ for all
	$\theta_k\in\Theta_k$. Let $h_{\tilde k}\left(\Theta_{\tilde k}\right)=\min\{h_k\left(\Theta_k\right):k=1,\ldots,K\}$. 
	Then for any $m\geq 1$,  
\begin{equation}
	\underset{n\rightarrow\infty}{\lim}~\pi(\zeta=k|\bX_n,\bY_{nm})\stackrel{a.s.}{=}\begin{cases} 1 & \mbox{if}~k=\tilde k\\
		0 & \mbox{if}~k\neq\tilde k.
	\end{cases}
	\label{eq:post_p}
\end{equation}
\end{theorem}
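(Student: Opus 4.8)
The plan is to reduce the statement to the consistency of Bayesian model selection and then to read off the exponential decay rates of the competing marginal likelihoods from Shalizi's framework. First I would make the posterior probability of the allocation variable explicit. Writing the prior as a product of an independent prior $\pi(p)$ on the weights $(p_1,\ldots,p_K)$ and the within-model priors $\pi(\theta_k|\mathcal M_k)$, and integrating out $p$ together with all the $\theta_j$, one obtains
\[
	\pi(\zeta=k|\bX_n,\bY_{nm})=\frac{E_\pi[p_k]\,m_k(\bY_{nm})}{\sum_{j=1}^K E_\pi[p_j]\,m_j(\bY_{nm})},\qquad
	m_k(\bY_{nm})=\int_{\Theta_k}f(\bY_{nm}|\bX_n,\theta_k,\mathcal M_k)\,\pi(\theta_k|\mathcal M_k)\,d\theta_k.
\]
Since $0<E_\pi[p_k]<\infty$ for every $k$ under the usual choices (e.g.\ a Dirichlet prior with positive parameters), it suffices to show that $m_k(\bY_{nm})/m_{\tilde k}(\bY_{nm})\stackrel{a.s.}{\longrightarrow}0$ for each $k\neq\tilde k$; the asserted dichotomy then follows from the identity $\pi(\zeta=\tilde k|\bX_n,\bY_{nm})=\big(1+\sum_{k\neq\tilde k}(E_\pi[p_k]/E_\pi[p_{\tilde k}])\,m_k/m_{\tilde k}\big)^{-1}$.

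Second I would identify the exponential rate of each marginal likelihood. Dividing by the true likelihood gives $m_k(\bY_{nm})/f_{\theta_0}(\bY_{nm})=\int_{\Theta_k}R_{n,k}(\theta_k)\,\pi(\theta_k|\mathcal M_k)\,d\theta_k$ with $R_{n,k}(\theta_k)=f_{\theta_k}(\bY_{nm})/f_{\theta_0}(\bY_{nm})$, where for the fixed $m\geq1$ the natural ``observation'' is the block $\by_i=(y_{i1},\ldots,y_{im})$, so that the effective sample size is $n$ and the rate $h_k$ already absorbs the $m$ replicates. Under (S1)--(S6) together with the stated domination, absolute-continuity and prior-positivity hypotheses, the asymptotic equipartition property (\ref{eq:equipartition}) and the integrated-likelihood control that underlies (\ref{eq:post_conv1}) yield
\[
	\frac{1}{n}\log\frac{m_k(\bY_{nm})}{f_{\theta_0}(\bY_{nm})}\stackrel{a.s.}{\longrightarrow}-h_k(\Theta_k).
\]
The lower bound here comes from restricting the integral to a small KL-neighbourhood of the essential minimizer of $h_k$, where $\pi(\theta_k|\mathcal M_k)>0$ supplies positive mass and (\ref{eq:equipartition}) forces the rate $-h_k(\Theta_k)$; the matching upper bound is the delicate ingredient, requiring the tail control in Shalizi's development to preclude regions of small prior mass but large likelihood ratio from inflating the integral.

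Subtracting the two rates gives, for every $k\neq\tilde k$,
\[
	\frac{1}{n}\log\frac{m_k(\bY_{nm})}{m_{\tilde k}(\bY_{nm})}\stackrel{a.s.}{\longrightarrow}-\big(h_k(\Theta_k)-h_{\tilde k}(\Theta_{\tilde k})\big)<0,
\]
where strict negativity is exactly the hypothesis that $\tilde k$ uniquely attains $\min_k h_k(\Theta_k)$. Hence $m_k/m_{\tilde k}\to0$ almost surely (indeed exponentially fast in $n$), and the representation of the first paragraph forces $\pi(\zeta=\tilde k|\bX_n,\bY_{nm})\to1$ and $\pi(\zeta=k|\bX_n,\bY_{nm})\to0$ for $k\neq\tilde k$, for every fixed $m\geq1$. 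The step I expect to be the main obstacle is the upper bound $\limsup_n\frac1n\log\{m_k(\bY_{nm})/f_{\theta_0}(\bY_{nm})\}\le-h_k(\Theta_k)$ for the non-optimal models: the lower bound and the bookkeeping are routine, whereas ruling out spurious tail contributions to the integrated likelihood is precisely where the full strength of Shalizi's conditions (the analogue of the sieve/tail hypothesis underlying (\ref{eq:post_conv1})) must be invoked.
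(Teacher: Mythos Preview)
Your argument is correct and follows essentially the same route as the paper: both reduce the claim to the almost sure convergence $\frac{1}{n}\log BF^{(nm)}(\mathcal M_k,\mathcal M_{\tilde k})\to-(h_k(\Theta_k)-h_{\tilde k}(\Theta_{\tilde k}))$, which the paper obtains by citing Theorem~2 of \ctn{Chatterjee18} (the result whose proof you sketch). The only cosmetic difference is that you marginalize the weights $p$ at the outset to obtain the formula with $E_\pi[p_k]$, whereas the paper works conditionally on $(p_1,\ldots,p_K)$ via $\pi(\zeta=k|\bX_n,\bY_{nm},p_1,\ldots,p_K)=p_k BF^{(nm)}(\mathcal M_k,\mathcal M_{\tilde k})\big/\sum_\ell p_\ell BF^{(nm)}(\mathcal M_\ell,\mathcal M_{\tilde k})$ and then passes to the unconditional limit by bounded convergence (uniform integrability) over the posterior of $p$.
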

\begin{proof}
	For any $k_1,k_2\in\{1,\ldots,K\}$, let $BF^{(nm)}(\mathcal M_{k_1},\mathcal M_{k_2})$ denote the Bayes factor of model $\mathcal M_{k_1}$ against
	model $\mathcal M_{k_2}$. Then as a direct consequence of Theorem 2 of \ctn{Chatterjee18}, the following holds for any $m\geq 1$:
	\begin{equation}
		\frac{1}{n}\log BF^{(nm)}(\mathcal M_k,\mathcal M_0)\rightarrow -h_k\left(\Theta_k\right),~\mbox{as}~n\rightarrow\infty,
		\label{eq:bf1}
	\end{equation}
	almost surely with respect to the true model $\mathcal M_0$. In the above, $h_k\left(\Theta_k\right)$ corresponds to (\ref{eq:equipartition}), 
	(\ref{eq:S3}) and (\ref{eq:h2}) for model $\mathcal M_k$ with parameter space $\Theta_k$.

Now, since $h_{\tilde k}\left(\Theta_{\tilde k}\right)=\min\{h_k\left(\Theta_k\right):k=1,\ldots,K\}$, it follows from (\ref{eq:bf1}) that as $n\rightarrow\infty$,
for any $m\geq 1$,
\begin{equation*}
	\frac{1}{n}\log BF^{(nm)}(\mathcal M_k,\mathcal M_{\tilde k})\rightarrow -\left[h_k\left(\Theta_k\right)-h_{\tilde k}\left(\Theta_{\tilde k}\right)\right],
\end{equation*}
so that as $n\rightarrow\infty$, for any $m\geq 1$,
\begin{equation}
	BF^{(nm)}(\mathcal M_k,\mathcal M_{\tilde k})=\begin{cases} 1 & \mbox{if}~k=\tilde k\\
		\stackrel{a.s.}{\longrightarrow} 0, & \mbox{if}~k\neq\tilde k.
	\end{cases}
	\label{eq:bf2}
\end{equation}
Now note that (see, for example, \ctn{Liang08})
\begin{equation}
	\pi(\zeta=k|\bX_n,\bY_{nm},p_1,\ldots,p_K)
	=\frac{p_kBF^{(nm)}(\mathcal M_k,\mathcal M_{\tilde k})}{\sum_{\ell=1}^Kp_\ell BF^{(nm)}(\mathcal M_{\ell},\mathcal M_{\tilde k})}.
	\label{eq:post_zeta}
\end{equation}
	Hence it follows by applying (\ref{eq:bf2}) to (\ref{eq:post_zeta}) that the following holds:
\begin{equation}
	\underset{n\rightarrow\infty}{\lim}~\pi(\zeta=k|\bX_n,\bY_{nm},p_1,\ldots,p_K)\stackrel{a.s.}{=}\begin{cases} 1 & \mbox{if}~k=\tilde k\\
		0 & \mbox{if}~k\neq\tilde k.
	\end{cases}
	\label{eq:post_zeta2}
\end{equation}
	Now note that $\pi(\zeta=k|\bX_n,\bY_{nm})=E\left[\pi(\zeta=k|\bX_n,\bY_{nm},p_1,\ldots,p_K)\right]$, the expectation being over the posterior
	distribution of $(p_1,\ldots,p_K)$ given $\bX_n$ and $\bY_{nm}$. Since $\pi(\zeta=k|\bX_n,\bY_{nm},p_1,\ldots,p_K)\leq 1$ almost surely, it follows
	by uniform integrability and (\ref{eq:post_zeta2}), that 
\begin{equation*}
	\underset{n\rightarrow\infty}{\lim}~\pi(\zeta=k|\bX_n,\bY_{nm})=E\left[\pi(\zeta=k|\bX_n,\bY_{nm},p_1,\ldots,p_K)\right]
	\stackrel{a.s.}{=}\begin{cases} 1 & \mbox{if}~k=\tilde k\\
		0 & \mbox{if}~k\neq\tilde k.
	\end{cases}
%	\label{eq:post_zeta3}
\end{equation*}
\end{proof}

\subsection{Convergence of the cross-validation posteriors of $\tilde x_i$}
\label{subsec:conv_cv_posteriors}

\begin{theorem}
	\label{theorem:inv_cons}
	For model $\mathcal M_{k}$ assume conditions (S1)--(S7) of Shalizi, and let the infimum of 
	$h_{k}(\theta_{k})$ over $\Theta_{\tilde k}$ be attained at $\tilde\theta_{k}\in\tilde\Theta_{k}$, 
	where $\tilde\theta_{k}\neq\theta_0$.
	Also assume that $\Theta_{k}$ and $\Theta_0$ are complete separable metric spaces.
	%for $i\geq 1$, $f(y_i|\theta,x_i,\bY^{(i-1)},\mathcal M_k)$ and $f(y_i|\theta,x_i,\bY^{(i-1)},\mathcal M_0)$ are bounded and continuous in $\theta$. 
	Then, with the prior (\ref{eq:prior_x}), under further assumptions that $\pi(\tilde x_i|\theta_{k},\by_i,\mathcal M_k)$ is contiuous in $\theta_{k}$,
	$f(\by_i|\tilde\theta_{k},\tilde x_i,\mathcal M_k)$ is continuous in $\tilde x_i$, for $i\geq 1$ and $\tilde\eta_k$ is a one-to-one function, the following holds:
	%Assumptions \ref{as:as1} -- \ref{as:as3}, for $i\geq 1$, 
\begin{equation}
\underset{m\rightarrow\infty}{\lim}\underset{n\rightarrow\infty}{\lim}~\pi(\tilde x_i\in V^c_{ik}|\bX_{n,-i},\bY_{nm},\mathcal M_k,\tilde\Theta_k)=0,~\mbox{almost surely},
\end{equation}
for any neighborhood $V_{ik}$ of $x^*_{ik}$.
\end{theorem}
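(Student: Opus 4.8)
\emph{Plan.} The two limits are nested as $\lim_{m\to\infty}\lim_{n\to\infty}$, so the strategy is to dispose of $n$ first (with $m$ and $i$ held fixed) and only then let $m\to\infty$. Writing out the $\tilde\Theta_k$-restricted cross-validation posterior (\ref{eq:cv_post2}) on the event $\{\tilde x_i\in V^c_{ik}\}$ gives
\begin{equation*}
\pi(\tilde x_i\in V^c_{ik}|\bX_{n,-i},\bY_{nm},\mathcal M_k,\tilde\Theta_k)
=\frac{\int_{\tilde\Theta_k}\pi(\tilde x_i\in V^c_{ik}|\theta_k,\by_i,\mathcal M_k)\,d\pi(\theta_k|\bX_{n,-i},\bY_{nm})}{\pi(\tilde\Theta_k|\bX_{n,-i},\bY_{nm})},
\end{equation*}
which is just the prior probability $\pi(\tilde x_i\in V^c_{ik}|\theta_k,\by_i,\mathcal M_k)$ averaged over the posterior of $\theta_k$ renormalized to $\tilde\Theta_k$. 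The plan for the inner limit is to show that this renormalized posterior concentrates at $\tilde\theta_k$, then pass the limit through the average by continuity; the outer limit then reduces to the already-recorded shrinkage $B_{im}(\tilde\theta_k)\stackrel{a.s.}{\longrightarrow}\{x^*_{ik}\}$.

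\emph{Inner limit ($n\to\infty$).} First I would invoke the fact, quoted from \ctn{Chat20} in Section \ref{subsec:key}, that for any $i\geq 1$ and $m\geq 1$ the leave-one-out posterior $\pi(\theta_k|\bX_{n,-i},\bY_{nm})$ is asymptotically identical to the full-data posterior, so Shalizi's machinery applies verbatim. Working with the prior restricted to $\tilde\Theta_k$ (conditions (S1)--(S7) carry over to this sub-model, with the sieves intersected with $\tilde\Theta_k$), the relevant essential infimum of the KL-rate is $h_k(\tilde\Theta_k)=h_k(\tilde\theta_k)$. For any open neighborhood $W$ of $\tilde\theta_k$, applying the posterior-convergence conclusion (\ref{eq:post_conv1}) to $A=\tilde\Theta_k\setminus W$ — for which $h_k(A)>h_k(\tilde\Theta_k)$ by the assumed attainment of the minimizer — shows that the renormalized posterior mass on $A$ tends to zero almost surely. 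Splitting the averaging integral over $W$ and $A$, bounding the integrand by $1$ on $A$, and using the assumed continuity of $\theta_k\mapsto\pi(\tilde x_i\in V^c_{ik}|\theta_k,\by_i,\mathcal M_k)$ on the arbitrarily small $W$, I obtain
\begin{equation*}
\lim_{n\to\infty}\pi(\tilde x_i\in V^c_{ik}|\bX_{n,-i},\bY_{nm},\mathcal M_k,\tilde\Theta_k)
\stackrel{a.s.}{=}\pi(\tilde x_i\in V^c_{ik}|\tilde\theta_k,\by_i,\mathcal M_k).
\end{equation*}

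\emph{Outer limit ($m\to\infty$).} The surviving quantity is the probability assigned to $V^c_{ik}$ by the uniform prior (\ref{eq:prior_x}) on the set $B_{im}(\tilde\theta_k)$ of (\ref{eq:set1}). As recorded before the statement, $B_{im}(\tilde\theta_k)\stackrel{a.s.}{\longrightarrow}\{x^*_{ik}\}$: indeed $\bar y_i$ converges to its limiting mean, $cs_i/\sqrt m\to 0$, and the one-to-one-ness of $\tilde\eta_k$ together with continuity of $f(\by_i|\tilde\theta_k,\cdot,\mathcal M_k)$ forces the inverse image of the shrinking interval to collapse to the single point $x^*_{ik}$. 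Hence, for any fixed neighborhood $V_{ik}$ of $x^*_{ik}$, $B_{im}(\tilde\theta_k)\subseteq V_{ik}$ for all large $m$, so that $\pi(\tilde x_i\in V^c_{ik}|\tilde\theta_k,\by_i,\mathcal M_k)=0$ eventually; taking $m\to\infty$ yields $0$. Intersecting the two probability-one sets completes the argument.

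\emph{Main obstacle.} The delicate step is the inner limit. Everything hinges on upgrading Shalizi's statement — which only annihilates posterior mass on sets $A$ with $h_k(A)>h_k(\tilde\Theta_k)$ — into genuine concentration at the \emph{point} $\tilde\theta_k$; this needs the separation property $h_k(\tilde\Theta_k\setminus W)>h_k(\tilde\theta_k)$ for every neighborhood $W$, which is exactly where the hypothesis that the infimum is attained at $\tilde\theta_k$ (and the completeness/separability of $\Theta_k$) becomes essential. The second subtlety is interchanging the $n\to\infty$ limit with the $\theta_k$-average, which is precisely what the assumed continuity of $\pi(\tilde x_i|\theta_k,\by_i,\mathcal M_k)$ in $\theta_k$ and the boundedness of the integrand are there to license. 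The remaining ingredients — the reduction of the leave-one-out posterior to the full-data posterior and the shrinkage of $B_{im}(\tilde\theta_k)$ — are quoted from the text or routine.
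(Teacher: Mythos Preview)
Your proposal is correct and follows essentially the same route as the paper: Shalizi's result gives posterior concentration at $\tilde\theta_k$, continuity of $\theta_k\mapsto\pi(\tilde x_i\in V^c_{ik}\mid\theta_k,\by_i,\mathcal M_k)$ passes the $n\to\infty$ limit through the $\theta_k$-average, and the $m\to\infty$ limit follows from the shrinkage $B_{im}(\tilde\theta_k)\stackrel{a.s.}{\longrightarrow}\{x^*_{ik}\}$. The only cosmetic difference is that the paper packages the inner limit as weak convergence $\pi(\cdot\mid\bX_{n,-i},\bY_{nm},\mathcal M_k)\stackrel{w}{\longrightarrow}\delta_{\tilde\theta_k}$ plus Portmanteau (invoking Scheffe to upgrade density-continuity to probability-continuity) rather than your direct neighborhood-splitting, and it handles the denominator $\pi(\tilde\Theta_k\mid\cdots)\to 1$ via the full posterior on $\Theta_k$ rather than by re-running Shalizi on the restricted prior.
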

\begin{proof}
By the hypotheses, (\ref{eq:post_conv1}) holds, from which it follows that for any $\epsilon>0$, and for any $m\geq 1$,
\begin{equation}
	\underset{n\rightarrow\infty}{\lim}~\pi(\mathbb N^c_{k,\epsilon}|\bX_{n,-i},\bY_{nm},\mathcal M_{k})=0, 
\label{eq:cons1}
\end{equation}
	where $\mathbb N_{k,\epsilon}=\left\{\theta_{k}:h_{k}(\theta_{k})\leq h_{k}\left(\Theta_{k}\right)+\epsilon\right\}$.

	Now, by hypothesis, the infimum of $h_{k}(\theta_{k})$ over $\Theta_{k}$ is attained at $\tilde\theta_{k}\in\Theta_{k}$, 
	where $\tilde\theta_{k}\neq\theta_0$. 
	Then by (\ref{eq:cons1}), the posterior of $\theta_{k}$ given $\bX_{n,-i}$ and $\bY_{nm}$, %given by (\ref{eq:post_forward2}),
	concentrates around $\tilde\theta_{k}$, the minimizer of the limiting KL-divergence rate from the true distribution. 
	Formally, given any neighborhood $U_k$ of $\tilde\theta_{k}$, the set $\mathbb N_{k,\epsilon}$ is contained in $U_k$ for sufficiently small $\epsilon$.
	It follows that for any neighborhood $U_k$ of $\tilde\theta_{k}$, $\pi(U_k|\bX_{n,-i},\bY_{nm},\mathcal M_{k})\rightarrow 1$, 
	almost surely, as $n\rightarrow\infty$.
	Since $\Theta_{k}$ is a complete, separable metric space, it follows that (see, for example, \ctn{Ghosh03}, \ctn{Ghosal17})
	\begin{equation}	
		\pi(\cdot|\bX_{n,-i},\bY_{nm},\mathcal M_{k})\stackrel{w}{\longrightarrow} \delta_{\tilde\theta_{k}}(\cdot),
		~\mbox{almost surely, as}~n\rightarrow\infty,~\mbox{for any}~m\geq 1.
		\label{eq:weak1}
	\end{equation}
	In the above, $\delta_{\tilde\theta_{k}}(\cdot)$ denotes point mass at $\tilde\theta_{k}$.
%where $``\stackrel{w}{\longrightarrow}"$ denotes weak convergence.
%	Then, due to (\ref{eq:weak1}) and the Portmanteau theorem, as $f(y_i|\theta,x_i,\bY^{(i-1)},\mathcal M)$ is bounded and continuous in $\theta$, it holds 
%	using (\ref{eq:post_forward1}), that
%	\begin{equation}
%		\pi(y_i|\bY_{n,-i},\bX_n,\mathcal M)\stackrel{a.s.}{\longrightarrow} f(y_i|\tilde\theta,x_i,\bY^{(i-1)},\mathcal M),~\mbox{as}~n\rightarrow\infty.
%		\label{eq:forward_cons1}
%	\end{equation}

	Now since $\tilde\Theta^c_{k}\subset\Theta_{k}$, $h_{k}\left(\tilde\Theta^c_{k}\right)>h_{k}\left(\Theta_{k}\right)$.
	Hence, from (\ref{eq:post_conv1}) it follows that for any $m\geq 1$, 
	\begin{equation}
		\pi\left(\theta_{k}\in\tilde\Theta^c_{k}|\bX_n,\bY_{nm},\mathcal M_k\right)\stackrel{a.s.}{\longrightarrow} 0,~\mbox{as}~n\rightarrow\infty.
		\label{eq:shalizi_conv1}
	\end{equation}
	Also note that since $\pi(\tilde x_i|\theta_{k},\by_i,\mathcal M_k)$ is continuous in $\theta_{k}$ by assumption, it follows by Scheffe's theorem
	that any probability associated with $\pi(\tilde x_i|\theta_{k},\by_i,\mathcal M_k)$ is continuous in $\theta_{k}$ (see Lemma 4.3 of \ctn{Chat20}). 
	Hence, for any neighborhood $V_{ik}$ of $x^*_{ik}$, the probability $\pi(\tilde x_i\in V^c_{ik}|\theta_{k},\by_i,\mathcal M_k)$ is continuous in $\theta_{k}$. 
%due to Lemma \ref{lemma:postprob_continuity}.
Moreover, since it is a probability, it is bounded. Hence, by the Portmanteau theorem, weak convergence of 
	$\pi\left(\theta_{k}|\bX_{n,-i},\bY_{nm},\mathcal M_{k}\right)$, and (\ref{eq:shalizi_conv1}) it holds almost surely that
\begin{align}
	\pi(\tilde x_i\in V^c_{ik}|\bX_{n,-i},\bY_{nm},\mathcal M_{k})
	&=\frac{\int_{\tilde\Theta_{k}}\pi(\tilde x_i\in V^c_i|\theta_{k},\by_i,\mathcal M_k)d\pi(\theta_{k}|\bX_{n,-i},\bY_{nm},\mathcal M_k)}
	{\pi\left(\tilde\Theta_{k}|\bX_{n,-i},\bY_{nm},\mathcal M_k\right)}\notag\\
	&\stackrel{a.s.}{\longrightarrow}
	\pi(\tilde x_i\in V^c_{ik}|\tilde\theta_{k},\by_i,\mathcal M_k),~\mbox{as}~n\rightarrow\infty,~\mbox{for any}~m\geq 1.\notag
%\label{eq:loo_cons1}
\end{align}
	That $\pi(\tilde x_i\in V^c_{ik}|\tilde\theta_{k},\by_i,\mathcal M_k)\stackrel{a.s.}{\longrightarrow}0$, as $m\rightarrow\infty$, follows in the same way
	as the proof of Theorem 2 of \ctn{Chat20} by replacing $\theta_0$ with $\tilde\theta_{k}$.
\end{proof}

\subsection{Posterior convergence of the discrepancy measures}
\label{subsec:conv_discrepancy}

\begin{theorem}
\label{theorem:consistency_T1}
%Assume conditions (S1)--(S7) of Shalizi, and the prior (\ref{eq:prior_x}). %Also let Assumptions \ref{as:as1} -- \ref{as:as3} hold, for $i\geq 1$,
%	Then, %with the prior (\ref{eq:prior_x}), 
%	under further assumptions that $\pi(\tilde x_i|\theta_{k},\by_i,\mathcal M_k)$ is contiuous in $\theta_{k}$,
%	$f(\by_i|\tilde\theta_{k},\tilde x_i,\mathcal M_k)$ is continuous in $\tilde x_i$, for $i\geq 1$ and $\tilde\eta_k$ is a one-to-one function, the following holds
%	for any $\varepsilon>0$:
%Define for some $\varepsilon>0$, the following: 
%$$T_1(\tilde\bX_n) =\frac{1}{n} \sum_{i=1}^n\frac{(\tilde x_i - E_{\pi}(\tilde x_i))^2}{V_{\pi}(\tilde x_i)+\varepsilon}$$
%and $$T_1(\bX_n) =\frac{1}{n} \sum_{i=1}^n\frac{(x_i - E_{\pi}(\tilde x_i))^2}{V_{\pi}(\tilde x_i)+\varepsilon}.$$
%Then 
	Under the conditions of Theorem \ref{theorem:inv_cons}, the following holds for any $\varepsilon>0$:
\begin{equation}	
	%\pi\left(\left|T^{(k)}(\tilde\bX_{n})-T^{(k)}(\bX^*_{nk})\right|>\varepsilon|\bX_n,\bY_{mn},\mathcal M_k,\tilde\Theta_k\right)\stackrel{a.s}{\longrightarrow}0,~\mbox{as}~m\rightarrow\infty,~n\rightarrow\infty,
	\pi\left(T^{(k)}(\tilde\bX_{n})>\varepsilon|\bX_n,\bY_{mn},\mathcal M_k,\tilde\Theta_k\right)\stackrel{a.s}{\longrightarrow}0,~\mbox{as}~m\rightarrow\infty,~n\rightarrow\infty,
	\label{eq:consistency_T1}
\end{equation}
	where $T^{(k)}=T^{(1)}_k$ or $T^{(2)}_k$.
%In the above, $``\stackrel{P}{\longrightarrow}"$ denotes convergence in probability.
\end{theorem}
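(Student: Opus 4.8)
The plan is to bound the posterior probability by a posterior expectation through Markov's inequality and then show this expectation vanishes in the iterated limit. Since $T^{(k)}(\tilde\bX_n)\geq 0$, for every $\varepsilon>0$,
\[
\pi\left(T^{(k)}(\tilde\bX_n)>\varepsilon \mid \bX_n,\bY_{nm},\mathcal M_k,\tilde\Theta_k\right)
\leq \frac{1}{\varepsilon}\,E\left[T^{(k)}(\tilde\bX_n)\mid \bX_n,\bY_{nm},\mathcal M_k,\tilde\Theta_k\right],
\]
so it suffices to prove the posterior expectation tends to zero almost surely with respect to $\mathcal M_0$. Writing $\mu_{ik}$ and $\sigma^2_{ik}$ for the cross-validation posterior mean and variance of $\tilde x_i$ under (\ref{eq:cv_post2}), by linearity of expectation the right-hand side separates over $i$.

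Next I would reduce both discrepancy measures to the single quantity $\frac{1}{n}\sum_{i=1}^n\sigma^2_{ik}$. For $T^{(k)}_2$ the posterior numerator is exactly the variance, so $E[T^{(k)}_2(\tilde\bX_n)\mid\cdot]=\frac{1}{n}\sum_{i=1}^n\frac{\sigma^2_{ik}}{\sigma^2_{ik}+c}\leq\frac{1}{c}\cdot\frac{1}{n}\sum_{i=1}^n\sigma^2_{ik}$. For $T^{(k)}_1$ I would use $E|\tilde x_i-\mu_{ik}|\leq\sigma_{ik}$ (Jensen), the lower bound $\sqrt{\sigma^2_{ik}+c}\geq\sqrt c$, and then Cauchy--Schwarz $\frac{1}{n}\sum_{i=1}^n\sigma_{ik}\leq(\frac{1}{n}\sum_{i=1}^n\sigma^2_{ik})^{1/2}$ to get $E[T^{(k)}_1(\tilde\bX_n)\mid\cdot]\leq c^{-1/2}(\frac{1}{n}\sum_{i=1}^n\sigma^2_{ik})^{1/2}$. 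Thus both cases follow once $\frac{1}{n}\sum_{i=1}^n\sigma^2_{ik}\stackrel{a.s.}{\to}0$ in the order $n\to\infty$ then $m\to\infty$; observe that the stabilizing constant $c>0$ is precisely what keeps these denominators bounded away from zero.

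To show $\sigma^2_{ik}\to 0$ I would apply the law of total variance, decomposing $\sigma^2_{ik}$ into the $\tilde\Theta_k$-restricted posterior mean of $Var(\tilde x_i\mid\theta_k,\by_i)$ plus the posterior variance of $E(\tilde x_i\mid\theta_k,\by_i)$. The weak convergence (\ref{eq:weak1}) of the posterior of $\theta_k$ to $\delta_{\tilde\theta_k}$ established in Theorem \ref{theorem:inv_cons} drives the second term to zero as $n\to\infty$ and sends the first to $Var(\tilde x_i\mid\tilde\theta_k,\by_i)$, the variance of the uniform law on $B_{im}(\tilde\theta_k)$; compactness of $\mathcal X$ converts weak convergence into convergence of these bounded moments. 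As $m\to\infty$ the response window in (\ref{eq:set1}) has width $2cs_i/\sqrt m$, and since $\tilde\eta_k$ is one-to-one with $H$ continuously differentiable, $B_{im}(\tilde\theta_k)$ shrinks to $\{x^*_{ik}\}$ at rate $O(1/\sqrt m)$, so this limiting variance vanishes.

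The main obstacle is passing from the per-index statement $\sigma^2_{ik}\to 0$ to convergence of the Cesàro average $\frac{1}{n}\sum_{i=1}^n\sigma^2_{ik}$, since the number of summands grows with $n$ and the limits are iterated. I would resolve this by upgrading the decay to be uniform in $i$: compactness of $\mathcal X$ yields a crude bound $\sigma^2_{ik}\leq[\mathrm{diam}(\mathcal X)]^2$, the posterior-concentration term is uniformly small once $n$ is large, and $\sup_i Var(\tilde x_i\mid\tilde\theta_k,\by_i)=O(1/m)$ because $\sup_i s^2_i$ is almost surely bounded for large $m$ (the $s^2_i$ converging to the true conditional variances over the compact covariate space) while the pullback through $H\circ\tilde\eta_k$ has a uniformly bounded inverse modulus of continuity. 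With $\sup_i\sigma^2_{ik}\to 0$ in the double limit, the average is dominated by its supremum and hence vanishes, completing the argument for both $T^{(k)}_1$ and $T^{(k)}_2$.
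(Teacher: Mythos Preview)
Your argument is correct and follows a genuinely different route from the paper's. The paper proceeds directly in probability: from Theorem \ref{theorem:inv_cons} it gets $\tilde x_i\stackrel{P}{\to}x^*_{ik}$ under the cross-validation posterior, then deduces $\mu_{ik}\to x^*_{ik}$ and $\sigma^2_{ik}\to 0$ via uniform integrability (compactness of $\mathcal X$), concludes that each standardized summand tends to zero in probability, and finally invokes Theorem~7.15 of Schervish (1995) to pass from term-wise convergence to convergence of the Ces\`aro average. You instead go through Markov's inequality and posterior moments, reducing everything to control of $\frac{1}{n}\sum_i\sigma^2_{ik}$, and you obtain $\sigma^2_{ik}\to 0$ by the law of total variance combined with weak convergence of the $\theta_k$-posterior rather than by uniform integrability of $\tilde x_i$ itself.

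What each approach buys: the paper's proof is shorter once the Schervish reference is accepted, and it never needs an explicit rate or uniformity statement---the Ces\`aro step is outsourced. Your route is more self-contained and quantitative (the bounds $c^{-1}\frac{1}{n}\sum_i\sigma^2_{ik}$ and $c^{-1/2}(\frac{1}{n}\sum_i\sigma^2_{ik})^{1/2}$ are explicit), and you confront the growing-index averaging issue head-on via uniformity in $i$. That uniformity discussion, while sketched, is the honest crux that the paper's citation hides; your argument that the leave-one-out posteriors concentrate uniformly and that $\sup_i\mathrm{diam}\,B_{im}(\tilde\theta_k)=O(m^{-1/2})$ would benefit from a line or two more justification, but the ingredients (compactness of $\mathcal X$, the paper's remark that $\pi(\theta_k|\bX_n,\bY_{nm},\mathcal M_k)$ and $\pi(\theta_k|\bX_{n,-i},\bY_{nm},\mathcal M_k)$ are asymptotically equivalent, and the explicit $m^{-1/2}$ window width in (\ref{eq:set1})) are all available.
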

\begin{proof}
	For $i\geq 1$, Theorem \ref{theorem:inv_cons} implies almost sure weak convergence of the $i$-th cross-validation posterior of $\tilde x_i$ 
	for model $\mathcal M_k$ to $\delta_{x^*_{ik}}$,
as $m\rightarrow\infty$ and $n\rightarrow\infty$. This is equivalent to convergence in (cross-validation posterior) distribution of $\tilde x_i$ 
	to the degenerate quantity $x^*_{ik}$, almost surely. Degeneracy guarantees that this is equivalent to convergence in probability, almost surely.
	In other words, with respect to the cross-validation posterior distribution of $\tilde x_i$ for model $\mathcal M_k$, 
	almost surely, as $m\rightarrow\infty$, $n\rightarrow\infty$,
	\begin{equation}
		\tilde x_i\stackrel{P}{\longrightarrow}x^*_{ik}.
		\label{eq:P1}
	\end{equation}
	Now note that $T^{(k)}(\tilde\bX_n)$ is an average of $n$ terms, the $i$-th term being 
	$\frac{\left|\tilde x_i-E\left(\tilde x_i|\bX_n,\bY_{mn},\mathcal M_k,\tilde\Theta_k\right)\right|}
	{\sqrt{Var\left(\tilde x_i|\bX_n,\bY_{mn},\mathcal M_k,\tilde\Theta_k\right)+c}}$ or its square.
	Since $\tilde x_i\in\mathcal X$ for $i\geq 1$ and $\mathcal X$ is compact, (\ref{eq:P1}) and uniform integrability entails that
	\begin{align}
		&\underset{m\rightarrow\infty}{\lim}\underset{n\rightarrow\infty}{\lim}~	
		E\left(\tilde x_i|\bX_n,\bY_{mn},\mathcal M_k,\tilde\Theta_k\right)\stackrel{a.s.}{=}x^*_{ik};\label{eq:E_P1}\\
		&\underset{m\rightarrow\infty}{\lim}\underset{n\rightarrow\infty}{\lim}~	
		Var\left(\tilde x_i|\bX_n,\bY_{mn},\mathcal M_k,\tilde\Theta_k\right)\stackrel{a.s.}{=}0.\label{eq:V_P1}
	\end{align}
	It follows from (\ref{eq:E_P1}) and (\ref{eq:V_P1}) that with respect to the cross-validation posterior distribution of $\tilde x_i$ for model $\mathcal M_k$,
	almost surely, as $m\rightarrow\infty$, $n\rightarrow\infty$,
	\begin{equation}
	\frac{\left|\tilde x_i-E\left(\tilde x_i|\bX_n,\bY_{mn},\mathcal M_k,\tilde\Theta_k\right)\right|}
	{\sqrt{Var\left(\tilde x_i|\bX_n,\bY_{mn},\mathcal M_k,\tilde\Theta_k\right)+c}}
		\stackrel{P}{\longrightarrow}0,~\mbox{for all}~i\geq 1.
		\label{eq:P2}
	\end{equation}
	Hence, by Theorem 7.15 of \ctn{Schervish95} (page 398), it follows that
	with respect to the cross-validation posterior distributions of $\{\tilde x_i; i\geq 1\}$, 
	for model $\mathcal M_k$, almost surely, as $m\rightarrow\infty$, $n\rightarrow\infty$,
	$$T^{(k)}(\tilde\bX_n)\stackrel{P}{\longrightarrow}0,$$
	which is equivalent to (\ref{eq:consistency_T1}).
\end{proof}

\begin{theorem}
	\label{theorem:disc_conv}
	Assume the conditions of Theorem \ref{theorem:consistency_T1}. Also assume that %$\mathcal X$ is compact and that 
	for $i\geq 1$, $x^*_{ik}$ is a 
	continuous function of $\left\{x_1,x_2,\ldots,x_{i-1},x_i,x_{i+1},\ldots,x_{i+\ell}\right\}$, for some non-negative integer $\ell$. 
	Then there exist positive constants $a_{1k}$ and $a_{2k}$
	such that
\begin{align}
%\underset{m\rightarrow\infty}{\lim}\underset{n\rightarrow\infty}{\lim}	
%	~\frac{\left|T^{(k)}_1(\bX^*_{nk})-T^{(k)}_1(\bX_n)\right|}{\sqrt{Var\left(T^{(k)}_1(\tilde \bX_n)|\bX_n,\bY_{nm},\mathcal M_k,\tilde\Theta_k\right)+c}}
%	&=a_{1k};\label{eq:T1_bound}\\
%\underset{m\rightarrow\infty}{\lim}\underset{n\rightarrow\infty}{\lim}	
%	~\frac{\left|T^{(k)}_2(\bX^*_{nk})-T^{(k)}_2(\bX_n)\right|}{\sqrt{Var\left(T^{(k)}_2(\tilde \bX_n)|\bX_n,\bY_{nm},\mathcal M_k,\tilde\Theta_k\right)+c}}
%	&=a_{2k}.\label{eq:T2_bound}
	\underset{m\rightarrow\infty}{\lim}\underset{n\rightarrow\infty}{\lim} ~T^{(k)}_1(\bX_n)&=a_{1k};\label{eq:T1_bound}\\
	\underset{m\rightarrow\infty}{\lim}\underset{n\rightarrow\infty}{\lim} ~T^{(k)}_2(\bX_n)&=a_{2k}.\label{eq:T2_bound}
\end{align}
\end{theorem}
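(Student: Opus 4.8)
The plan is to reduce the assertion to a statement about Cesàro averages of per-index limits, and then to secure convergence of those averages using the new continuity hypothesis together with compactness of $\mathcal X$. First I would recall the two intermediate facts already established inside the proof of Theorem~\ref{theorem:consistency_T1}, namely (\ref{eq:E_P1}) and (\ref{eq:V_P1}): for each fixed $i$,
\[
\lim_{m\rightarrow\infty}\lim_{n\rightarrow\infty} E(\tilde x_i|\bX_n,\bY_{mn},\mathcal M_k,\tilde\Theta_k)\stackrel{a.s.}{=}x^*_{ik},\qquad
\lim_{m\rightarrow\infty}\lim_{n\rightarrow\infty} Var(\tilde x_i|\bX_n,\bY_{mn},\mathcal M_k,\tilde\Theta_k)\stackrel{a.s.}{=}0.
\]
Substituting the observed covariate $v_i=x_i$ into (\ref{eq:T1}) and (\ref{eq:T2}), the $i$-th summand of $T^{(k)}_1(\bX_n)$ then converges almost surely (in the iterated limit) to $g^{(1)}_{ik}:=|x_i-x^*_{ik}|/\sqrt{c}$, and the $i$-th summand of $T^{(k)}_2(\bX_n)$ to $g^{(2)}_{ik}:=(x_i-x^*_{ik})^2/c$. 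Here the additive constant $c>0$ is essential: it keeps each denominator bounded away from $0$ even though the variance vanishes, so that the per-index limits are finite and deterministic.

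It then remains to pass from termwise convergence of the summands to convergence of their average $\tfrac{1}{n}\sum_{i=1}^n g^{(j)}_{ik}$, and this is where the continuity hypothesis enters and where the genuine difficulty lies. Writing $x^*_{ik}=\phi_k(x_1,\ldots,x_{i+\ell})$ for a fixed continuous map $\phi_k$ (determined by $\tilde\theta_k$ and $\theta_0$), each summand $g^{(j)}_{ik}$ becomes a continuous function of a finite block of covariates; since $\mathcal X$ is compact, $\phi_k$ and hence the $g^{(j)}_{ik}$ are uniformly bounded. Boundedness supplies the uniform integrability needed to interchange the limit over $n$ with the average and to carry the $m$-limit through, exactly as in the appeal to Theorem~7.15 of \ctn{Schervish95} in the preceding proof, thereby reducing the claim to convergence of the deterministic Cesàro mean $\tfrac{1}{n}\sum_{i=1}^n g^{(j)}_{ik}$. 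Because each $g^{(j)}_{ik}$ depends on only a window $(x_i,\ldots,x_{i+\ell})$ of covariates through the same fixed continuous map, this Cesàro mean converges under the very limiting-average regime that already underlies the existence of the KL-divergence rate (\ref{eq:S3}) in Shalizi's framework; I would denote the resulting limits by $a_{1k}$ and $a_{2k}$.

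Finally I would verify positivity. Since Theorem~\ref{theorem:inv_cons} is invoked with $\tilde\theta_k\neq\theta_0$, the misspecification forces $x^*_{ik}\neq x_i$ for every $i$, so that each $g^{(j)}_{ik}>0$; combined with the upper bound on the denominators (the posterior variance is bounded above by compactness of $\mathcal X$, whence $Var+c$ is bounded), the averages stay bounded away from $0$, giving $a_{1k}>0$ and $a_{2k}>0$.

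I expect the main obstacle to be precisely this double interchange of limits and averaging. Unlike in Theorem~\ref{theorem:consistency_T1}, where every termwise limit is $0$ and the average trivially vanishes, here the limits $g^{(j)}_{ik}$ are non-degenerate, so one must genuinely establish convergence of their Cesàro mean rather than read it off; the new continuity-of-$x^*_{ik}$ assumption, together with compactness, is exactly what makes each summand a bounded, locally-determined continuous functional of the covariate sequence and thereby renders the averaging argument legitimate.
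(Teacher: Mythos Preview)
Your reduction via (\ref{eq:E_P1})--(\ref{eq:V_P1}) to the deterministic averages $\tfrac{1}{n}\sum_i |x_i-x^*_{ik}|/\sqrt{c}$ and $\tfrac{1}{n}\sum_i (x_i-x^*_{ik})^2/c$ is exactly the paper's first move, and you are right that the additive $c>0$ is what keeps the summands finite as the posterior variance collapses. The structural outline is sound.

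The gap is in the step you yourself flag as the main obstacle. Your justification for convergence of the Cesàro mean---``this Cesàro mean converges under the very limiting-average regime that already underlies the existence of the KL-divergence rate (\ref{eq:S3}) in Shalizi's framework''---is not an argument: Shalizi's assumptions (S2)--(S3) concern $\tfrac{1}{n}\log R_n(\theta)$ and give no control whatsoever over averages of other continuous functionals of the covariate sequence. Likewise, Theorem~7.15 of \ctn{Schervish95} handles termwise convergence in probability to zero, which is why it sufficed in Theorem~\ref{theorem:consistency_T1}; here the termwise limits are nonzero and that device does not deliver convergence of the average. As written, this step is unsupported.

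The paper closes the gap by a different, concrete mechanism: writing $u_{ik}=x_i-x^*_{ik}$ and using the continuity hypothesis together with compactness of $\mathcal X$ to invoke Riemann-sum convergence, identifying the limits explicitly as
\[
a_{1k}=c^{-1/2}\,|\tilde{\mathcal X}_k|^{-1}\!\int_{\tilde{\mathcal X}_k}|u|\,du,\qquad
a_{2k}=c^{-1}\,|\tilde{\mathcal X}_k|^{-1}\!\int_{\tilde{\mathcal X}_k}u^2\,du,
\]
where $\tilde{\mathcal X}_k$ is the compact image of the map $(x_1,\ldots,x_{i+\ell})\mapsto u_{ik}$. This integral representation also yields positivity of $a_{1k},a_{2k}$ immediately, without your termwise $x^*_{ik}\neq x_i$ argument (which, incidentally, would need more than $\tilde\theta_k\neq\theta_0$ to guarantee strict inequality for \emph{every} $i$). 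Replace the Shalizi hand-wave with this Riemann-sum identification and your proof goes through.
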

\begin{proof}
	%Using the same ideas as in the proof of Theorem 4 of \ctn{Chat20} it is seen that as $m\rightarrow\infty$ and $n\rightarrow\infty$,
	%\begin{align}
	%	&E\left(\tilde x_i|\bX_n,\bY_{mn},\mathcal M_k,\tilde\Theta_k\right)\stackrel{a.s}{\longrightarrow}x^*_{ik};\label{eq:E1}\\
	%	&Var\left(\tilde x_i|\bX_n,\bY_{mn},\mathcal M_k,\tilde\Theta_k\right)\stackrel{a.s}{\longrightarrow}0.\label{eq:V1}
	%\end{align}
	It follows from %(\ref{eq:E1}) and (\ref{eq:V1}) that as $m\rightarrow\infty$ and $n\rightarrow\infty$,
	(\ref{eq:E_P1}) and (\ref{eq:V_P1}) that
	\begin{align}
		%&T^{(k)}_1(\bX^*_{nk})\stackrel{a.s}{\longrightarrow}0;\label{eq:T1_1}\\
		%&T^{(k)}_2(\bX^*_{nk})\stackrel{a.s}{\longrightarrow}0;\label{eq:T2_1}\\
		&T^{(k)}_1(\bX_n)\stackrel{a.s}{\longrightarrow}\underset{n\rightarrow\infty}{\lim} ~\frac{1}{n\sqrt{c}}\sum_{i=1}^n\left|x_i-x^*_{ik}\right|;\label{eq:T1_2}\\
		&T^{(k)}_2(\bX_n)\stackrel{a.s}{\longrightarrow}\underset{n\rightarrow\infty}{\lim} ~\frac{1}{nc}\sum_{i=1}^n\left(x_i-x^*_{ik}\right)^2.\label{eq:T2_2}
	\end{align}
	Now, by our assumption, $x^*_{ik}$ is a continuous function of $\left\{x_1,x_2,\ldots,x_{i-1},x_i,x_{i+1},\ldots,x_{i+\ell}\right\}$, 
	for some non-negative integer $\ell$.
	Hence, letting $u_{ik}=x_i-x^*_{ik}$, it follows by Riemann sum convergence that 
	\begin{align}
		&\underset{n\rightarrow\infty}{\lim} ~\frac{1}{n\sqrt{c}}\sum_{i=1}^n\left|x_i-x^*_{ik}\right|=c^{-\frac{1}{2}}
		|\tilde{\mathcal X_k}|^{-1}\int_{\tilde{\mathcal X_k}}|u|du;\label{eq:R1}\\
		&\underset{n\rightarrow\infty}{\lim} ~\frac{1}{nc}\sum_{i=1}^n\left(x_i-x^*_{ik}\right)^2
		=c^{-1}|\tilde{\mathcal X_k}|^{-1}\int_{\tilde{\mathcal X_k}}u^2du,\label{eq:R2}
	\end{align}
	where $\tilde{\mathcal X_k}$ is the appropriate compact co-domain of $u_{ik}$ induced by the transformation $u_{ik}=x_i-x^*_{ik}$ and the original compact covariate
	space $\mathcal X$, and $|\tilde{\mathcal X_k}|$ stands for the Lebesgue measure of $\tilde{\mathcal X_k}$.

	Since the right hand sides of (\ref{eq:R1}) and (\ref{eq:R2}) are well-defined positive quantities, the proof follows by combining (\ref{eq:T1_2}) -- (\ref{eq:R2}).
\end{proof}

\begin{theorem}
\label{theorem:consistency_T}
%Assume conditions (S1)--(S7) of Shalizi, and the prior (\ref{eq:prior_x}). %Also let Assumptions \ref{as:as1} -- \ref{as:as3} hold, for $i\geq 1$,
%	Then, %with the prior (\ref{eq:prior_x}), 
%	under further assumptions that $\pi(\tilde x_i|\theta_{k},\by_i,\mathcal M_k)$ is contiuous in $\theta_{k}$,
%	$f(\by_i|\tilde\theta_{k},\tilde x_i,\mathcal M_k)$ is continuous in $\tilde x_i$, for $i\geq 1$ and $\tilde\eta_k$ is a one-to-one function, i
	Assume the conditions of Theorem \ref{theorem:disc_conv}.
	Then the following holds
	for any $\varepsilon>0$, where $T^{(k)}=T^{(k)}_1$ or $T^{(k)}_2$ and respectively, $a_k=a_{1k}$ or $a_{2k}$:
%Define for some $\varepsilon>0$, the following: 
%$$T_1(\tilde\bX_n) =\frac{1}{n} \sum_{i=1}^n\frac{(\tilde x_i - E_{\pi}(\tilde x_i))^2}{V_{\pi}(\tilde x_i)+\varepsilon}$$
%and $$T_1(\bX_n) =\frac{1}{n} \sum_{i=1}^n\frac{(x_i - E_{\pi}(\tilde x_i))^2}{V_{\pi}(\tilde x_i)+\varepsilon}.$$
%Then 
%	almost surely with respect to the posterior distribution given $\bX_n,\bY_{nm},\mathcal M_k,\tilde\Theta_k$,
\begin{equation}	
%	\frac{\left|T^{(k)}(\tilde \bX_n)-T^{(k)}(\bX_n)\right|}{\sqrt{Var\left(T(\tilde \bX_n)|\bX_n,\bY_{nm},\mathcal M_k,\tilde\Theta_k\right)+c}}
%	\stackrel{P}{\longrightarrow}a_k,~\mbox{as}~m\rightarrow\infty,~n\rightarrow\infty.
%
%	\pi\left(\frac{\left|T^{(k)}(\tilde \bX_n)-T^{(k)}(\bX_n)\right|}{\sqrt{Var\left(T(\tilde \bX_n)|\bX_n,\bY_{nm},\mathcal M_k,\tilde\Theta_k\right)+c}}>a_k+\varepsilon
%	\bigg |\bX_n,\bY_{nm},\mathcal M_{k},\tilde\Theta_k\right)
%	\stackrel{a.s.}{\longrightarrow} 0,~\mbox{as}~m\rightarrow\infty,~n\rightarrow\infty.
	\underset{m\rightarrow\infty}{\lim}\underset{n\rightarrow\infty}{\lim}~
	\pi\left(T^{(k)}(\tilde \bX_n)-T^{(k)}(\bX_n)\in [\tilde\ell_{knm}-a_k-\varepsilon,\tilde u_{knm}-a_k+\varepsilon]^c
	\bigg |\bX_n,\bY_{nm},\mathcal M_k,\tilde\Theta_k\right)
	\stackrel{a.s.}{=}0.
	\label{eq:consistency_T}
\end{equation}
\end{theorem}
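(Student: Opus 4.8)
The plan is to recast the claim as showing that the posterior probability of the \emph{complementary} event splits into a lower-tail and an upper-tail piece, namely
\[
\left\{T^{(k)}(\tilde\bX_n)-T^{(k)}(\bX_n)<\tilde\ell_{knm}-a_k-\varepsilon\right\}\cup\left\{T^{(k)}(\tilde\bX_n)-T^{(k)}(\bX_n)>\tilde u_{knm}-a_k+\varepsilon\right\},
\]
and to show that each piece has posterior probability tending to zero almost surely in the iterated limit ($n\to\infty$ followed by $m\to\infty$). The three facts I would rely on are: (i) the in-posterior concentration $T^{(k)}(\tilde\bX_n)\stackrel{P}{\longrightarrow}0$ of Theorem \ref{theorem:consistency_T1}; (ii) the deterministic limit $T^{(k)}(\bX_n)\stackrel{a.s.}{\longrightarrow}a_k$ of Theorem \ref{theorem:disc_conv}; and (iii) the elementary observation that $T^{(k)}\ge 0$, since both $T^{(k)}_1$ and $T^{(k)}_2$ are averages of nonnegative terms. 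I would also use repeatedly that, under the cross-validation posterior, only $\tilde\bX_n$ is random, so $T^{(k)}(\bX_n)$ behaves as a data-dependent constant that can be moved freely across the inequalities.

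Next I would exploit (i) to pin down the credible interval. Because the posterior law of $T^{(k)}(\tilde\bX_n)$ is supported on $[0,\infty)$ and collapses to the point mass at $0$, any genuine $100(1-\alpha)\%$ credible interval satisfies $0\le\tilde\ell_{knm}\le\tilde u_{knm}$ with $\tilde\ell_{knm}\to 0$ and $\tilde u_{knm}\to 0$. For the lower-tail piece I would rewrite the event as $\left\{T^{(k)}(\tilde\bX_n)<\tilde\ell_{knm}+\left(T^{(k)}(\bX_n)-a_k\right)-\varepsilon\right\}$; since almost surely $\tilde\ell_{knm}<\varepsilon/2$ and $\left|T^{(k)}(\bX_n)-a_k\right|<\varepsilon/2$ for all large $n,m$, the right-hand threshold is strictly negative, and because $T^{(k)}(\tilde\bX_n)\ge 0$ this event is empty and carries zero posterior probability.

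For the upper-tail piece I would rewrite it as $\left\{T^{(k)}(\tilde\bX_n)>\tilde u_{knm}+\left(T^{(k)}(\bX_n)-a_k\right)+\varepsilon\right\}$. Using $\tilde u_{knm}\ge 0$ together with $\left|T^{(k)}(\bX_n)-a_k\right|<\varepsilon/2$ almost surely for large $n,m$, the threshold is bounded below by $\varepsilon/2>0$, so this event is contained in $\left\{T^{(k)}(\tilde\bX_n)>\varepsilon/2\right\}$, whose posterior probability tends to zero almost surely by Theorem \ref{theorem:consistency_T1}. Summing the two contributions yields (\ref{eq:consistency_T}).

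The main obstacle I anticipate is the rigorous justification that the credible endpoints shrink, i.e.\ $\tilde\ell_{knm}\to 0$, from the mere convergence in posterior probability supplied by Theorem \ref{theorem:consistency_T1}. This requires tying the chosen convention for the credible interval to the concentrating posterior: for an equal-tailed interval, the upper $(1-\alpha/2)$-quantile falls below any fixed $\delta>0$ as soon as $\pi\left(T^{(k)}(\tilde\bX_n)>\delta\,\middle|\,\bX_n,\bY_{nm},\mathcal M_k,\tilde\Theta_k\right)<\alpha/2$, which holds eventually by Theorem \ref{theorem:consistency_T1}, and then $\tilde\ell_{knm}\le\tilde u_{knm}\to 0$; for an HPD interval the same conclusion follows from weak convergence to $\delta_0$. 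I would stress that, because $T^{(k)}\ge 0$, only shrinkage of the \emph{lower} endpoint $\tilde\ell_{knm}$ is strictly needed, so even a conservative convention with a non-vanishing upper endpoint would still give the result; verifying this robustness is the step I would treat most carefully.
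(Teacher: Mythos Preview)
Your argument is correct and follows the same decomposition as the paper: split the complement into a lower-tail and an upper-tail event, use nonnegativity of $T^{(k)}$ together with $T^{(k)}(\bX_n)\to a_k$ to make the lower-tail threshold eventually negative (hence the event empty), and then control the upper tail. The difference lies in the tools. The paper first invokes uniform integrability (using compactness of $\mathcal X$) to upgrade the in-probability concentration of Theorem~\ref{theorem:consistency_T1} to convergence of the posterior mean and variance of $T^{(k)}(\tilde\bX_n)$ to zero, deduces $\tilde\ell_{knm},\tilde u_{knm}\to 0$ from that, and then bounds the upper tail by Markov's inequality using these moments. Your route is more elementary: you get the endpoint shrinkage directly from a quantile argument based on Theorem~\ref{theorem:consistency_T1}, and for the upper tail you simply observe that $\tilde u_{knm}\ge 0$ forces the threshold to exceed $\varepsilon/2$, so the event sits inside $\{T^{(k)}(\tilde\bX_n)>\varepsilon/2\}$, whose probability already vanishes by Theorem~\ref{theorem:consistency_T1}. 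This avoids both the uniform-integrability step and Markov's inequality, and your closing remark that only $\tilde\ell_{knm}\to 0$ is strictly needed (since $\tilde u_{knm}\ge 0$ suffices for the upper tail) is a genuine sharpening of the paper's argument.
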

\begin{proof}
	First, observe that since for $i=1,\ldots,n$, $\tilde x_i\in \mathcal X$ almost surely, where $\mathcal X$ is compact, 
	$\frac{\left|\tilde x_i-E\left(\tilde x_i|\bX_n,\bY_{mn},\mathcal M_k,\tilde\Theta_k\right)\right|}
	{\sqrt{Var\left(\tilde x_i|\bX_n,\bY_{mn},\mathcal M_k,\tilde\Theta_k\right)+c}}$ are almost surely uniformly bounded.
	Hence, $T^{(k)}_1(\tilde \bX_n)$ and $T^{(k)}_2(\tilde \bX_n)$ are almost surely bounded. Consequently, 
	using (\ref{eq:consistency_T1}) of Theorem \ref{theorem:consistency_T1}
	and uniform integrability it follows that
	%Hence, it follows, using independence among $\tilde x_i$; $i=1\geq 1$, with respect to the cross-validation posteriors, that
	\begin{align}
		&\underset{m\rightarrow\infty}{\lim}\underset{n\rightarrow\infty}{\lim}~E\left(T^{(k)}_1(\tilde \bX_n)|\bX_n,\bY_{nm},\mathcal M_k,\tilde\Theta_k\right)
		\stackrel{a.s}{=}0;\label{eq:E_T1}\\
		&\underset{m\rightarrow\infty}{\lim}\underset{n\rightarrow\infty}{\lim}~E\left(T^{(k)}_2(\tilde \bX_n)|\bX_n,\bY_{nm},\mathcal M_k,\tilde\Theta_k\right)
		\stackrel{a.s}{=}0; \label{eq:E_T2}\\
		&\underset{m\rightarrow\infty}{\lim}\underset{n\rightarrow\infty}{\lim}~Var\left(T^{(k)}_1(\tilde \bX_n)|\bX_n,\bY_{nm},\mathcal M_k,\tilde\Theta_k\right)
		\stackrel{a.s}{=}0; \label{eq:Var_T1}\\
		&\underset{m\rightarrow\infty}{\lim}\underset{n\rightarrow\infty}{\lim}~Var\left(T^{(k)}_2(\tilde \bX_n)|\bX_n,\bY_{nm},\mathcal M_k,\tilde\Theta_k\right)
		\stackrel{a.s}{=}0. \label{eq:Var_T2}
	\end{align}
	The limits (\ref{eq:E_T1}) -- (\ref{eq:Var_T2}) imply that
	\begin{align}
		&\underset{m\rightarrow\infty}{\lim}\underset{n\rightarrow\infty}{\lim}~\tilde\ell_{knm}\stackrel{a.s}{=}0;\label{eq:l_limit}\\
		&\underset{m\rightarrow\infty}{\lim}\underset{n\rightarrow\infty}{\lim}~\tilde u_{knm}\stackrel{a.s}{=}0.\label{eq:u_limit}
	\end{align}
	Due to (\ref{eq:l_limit}) and Theorem \ref{theorem:disc_conv}, given any $\varepsilon>0$, for sufficiently large $m$ and $n$, 
	$\tilde\ell_{knm}-a_k+T^{(k)}(\bX_n)-\varepsilon<0$. Since $T^{(k)}(\tilde\bX_n)>0$ with probability one, we thus have 
	\begin{equation}
		\underset{m\rightarrow\infty}{\lim}\underset{n\rightarrow\infty}{\lim}~
		\pi\left(T^{(k)}(\tilde\bX_n)>\tilde\ell_{knm}-a_k+T^{(k)}(\bX_n)-\varepsilon\bigg |\bX_n,\bY_{nm},\mathcal M_k,\tilde\Theta_k\right)
		\stackrel{a.s}{=}1.
		\label{eq:prob_l}
	\end{equation}
	Also, due to (\ref{eq:u_limit}) and Theorem \ref{theorem:disc_conv}, given any $\varepsilon>0$, for sufficiently large $m$ and $n$,
	$\tilde u_{knm}-a_k+T^{(k)}(\bX_n)+\varepsilon>0$.
	Hence, given any $\varepsilon>0$, for sufficiently large $m$ and $n$, we have by Markov's inequality, 
	\begin{align}
		&\pi\left(T^{(k)}(\tilde\bX_n)>\tilde u_{knm}-a_k+T^{(k)}(\bX_n)+\varepsilon\bigg |\bX_n,\bY_{nm},\mathcal M_k,\tilde\Theta_k\right)\notag\\
		&<\left(\tilde u_{knm}-a_k+T^{(k)}(\bX_n)+\varepsilon\right)^{-2}\notag\\
		&\qquad\times
		\left[Var\left(T^{(k)}(\tilde \bX_n)|\bX_n,\bY_{nm},\mathcal M_k,\tilde\Theta_k\right)
		+\left\{E\left(T^{(k)}(\tilde \bX_n)|\bX_n,\bY_{nm},\mathcal M_k,\tilde\Theta_k\right)\right\}^2\right].
		\label{eq:prob_u0}
	\end{align}
	Taking limits of both sides of (\ref{eq:prob_u0}) and using (\ref{eq:E_T1}) -- (\ref{eq:Var_T2}) we obtain
	\begin{equation}
		\underset{m\rightarrow\infty}{\lim}\underset{n\rightarrow\infty}{\lim}~
		\pi\left(T^{(k)}(\tilde\bX_n)>\tilde u_{knm}-a_k+T^{(k)}(\bX_n)+\varepsilon\bigg |\bX_n,\bY_{nm},\mathcal M_k,\tilde\Theta_k\right)
		\stackrel{a.s}{=}0.
		\label{eq:prob_u}
	\end{equation}
	Combining (\ref{eq:prob_l}) and (\ref{eq:prob_u}) yields
	\begin{align}
		&\underset{m\rightarrow\infty}{\lim}\underset{n\rightarrow\infty}{\lim}~
		\pi\left(T^{(k)}(\tilde \bX_n)-T^{(k)}(\bX_n)\in [\tilde\ell_{knm}-a_k-\varepsilon,\tilde u_{knm}-a_k+\varepsilon]
		\bigg |\bX_n,\bY_{nm},\mathcal M_k,\tilde\Theta_k\right)\notag\\
		&=\underset{m\rightarrow\infty}{\lim}\underset{n\rightarrow\infty}{\lim}~
		\pi\left(T^{(k)}(\tilde\bX_n)>\tilde\ell_{knm}-a_k+T^{(k)}(\bX_n)-\varepsilon\bigg |\bX_n,\bY_{nm},\mathcal M_k,\tilde\Theta_k\right)\notag\\
		&\qquad\qquad-\underset{m\rightarrow\infty}{\lim}\underset{n\rightarrow\infty}{\lim}~
		\pi\left(T^{(k)}(\tilde\bX_n)>\tilde u_{knm}-a_k+T^{(k)}(\bX_n)+\varepsilon\bigg |\bX_n,\bY_{nm},\mathcal M_k,\tilde\Theta_k\right)\notag\\
		&	\stackrel{a.s}{=}1,\notag
	\end{align}
	thus proving (\ref{eq:consistency_T}).

\end{proof}

\begin{remark}
	\label{remark:consistency_T}
	In all the examples provided in \ctn{Chat20a}, it has been shown that the conditions of Theorem \ref{theorem:disc_conv} are satisfied.
	Hence, Theorem \ref{theorem:consistency_T} holds for all the examples presented in \ctn{Chat20a}.
\end{remark}

\subsection{Convergence of the posterior probabilities of $H_{1k}$}
\label{subsec:conv_H1k}
\begin{theorem}
	\label{theorem:conv_vknm}
	Assume that for $k=1,\ldots,K$, $\mathcal M_k$ satisfies conditions (S1)--(S7) of Shalizi, and that the competing models as well as the true model
	have densities with respect to some common $\sigma$-finite measure.
	Also assume that the posterior associated with $\mathcal M_k$ is dominated by the prior, which is again absolutely continuous
	with respect to some appropriate $\sigma$-finite measure, and that the priors satisfy $\pi(\theta_k|\mathcal M_k)>0$ for all $\theta_k\in\Theta_k$. 
	Let $h_{\tilde k}\left(\Theta_{\tilde k}\right)=\min\{h_k\left(\Theta_k\right):k=1,\ldots,K\}$.
	%If $\gamma>2h_k\left(\tilde\Theta^c_k\right)$, where $\gamma$ given in (\ref{eq:S5_1}) under assumption (S5), %then for any $m\geq 1$,  
%\begin{equation}
%	\underset{n\rightarrow\infty}{\lim}~\frac{1}{n}\log\pi(\tilde\Theta^c_k|\bX_n,\bY_{nm})\stackrel{a.s.}{=}-J_k\left(\tilde\Theta^c_k\right),
%	\label{eq:post_theta}
%\end{equation}
%	where $J_k$ corresponds to (\ref{eq:J}) and (\ref{eq:J2}) associated with model $\mathcal M_k$.
Then 
\begin{equation}
	\underset{m\rightarrow\infty}{\lim}\underset{n\rightarrow\infty}{\lim}~v_{knm}\stackrel{a.s.}{=}
	\begin{cases}1 & \mbox{if}~k\neq\tilde k\\ 0 & \mbox{if}~k=\tilde k.  \end{cases}
	\label{eq:post_v2}
\end{equation}
\end{theorem}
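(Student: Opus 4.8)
The plan is to decompose the null hypothesis as an intersection of three events,
$A_k=\{\zeta=k\}$, $B_k=\{\theta_k\in\tilde\Theta_k\}$, and
$C_k=\{T^{(k)}(\tilde\bX_n)-T^{(k)}(\bX_n)\in[\tilde\ell_{knm}-a_k-\varepsilon,\tilde u_{knm}-a_k+\varepsilon]\}$,
so that $H_{0k}=A_k\cap B_k\cap C_k$ and correspondingly
$H_{1k}=A_k^c\cup(A_k\cap B_k^c)\cup(A_k\cap B_k\cap C_k^c)$, matching (\ref{eq:H0})--(\ref{eq:H1}). Since $v_{knm}=\pi(H_{1k}\mid\bX_n,\bY_{nm})=1-\pi(H_{0k}\mid\bX_n,\bY_{nm})$, I would treat the two cases $k\neq\tilde k$ and $k=\tilde k$ separately, drawing on Theorem \ref{theorem:bf1}, Theorem \ref{theorem:consistency_T}, and the posterior concentration statement (\ref{eq:shalizi_conv1}).

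For $k\neq\tilde k$, the argument is immediate: since $H_{0k}\subseteq A_k$, I would bound $\pi(H_{0k}\mid\bX_n,\bY_{nm})\le\pi(\zeta=k\mid\bX_n,\bY_{nm})$, which tends to $0$ almost surely as $n\to\infty$ for every fixed $m\ge1$ by Theorem \ref{theorem:bf1}. Hence $v_{knm}\to1$, with no limit in $m$ required. For $k=\tilde k$ I would instead show $\pi(H_{1\tilde k}\mid\bX_n,\bY_{nm})\to0$ via a union bound over the three disjuncts. The first term, $\pi(\zeta\neq\tilde k\mid\bX_n,\bY_{nm})$, vanishes by Theorem \ref{theorem:bf1}. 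For the second I would write $\pi(\{\zeta=\tilde k\}\cap\{\theta_{\tilde k}\in\tilde\Theta_{\tilde k}^c\}\mid\bX_n,\bY_{nm})\le\pi(\theta_{\tilde k}\in\tilde\Theta_{\tilde k}^c\mid\bX_n,\bY_{nm},\zeta=\tilde k)$, identify this conditional law with the single-model posterior $\pi(\theta_{\tilde k}\in\tilde\Theta_{\tilde k}^c\mid\bX_n,\bY_{nm},\mathcal M_{\tilde k})$, and invoke (\ref{eq:shalizi_conv1}) to conclude it tends to $0$. For the third I would bound $\pi(\{\zeta=\tilde k\}\cap\{\theta_{\tilde k}\in\tilde\Theta_{\tilde k}\}\cap C_{\tilde k}^c\mid\bX_n,\bY_{nm})\le\pi(C_{\tilde k}^c\mid\bX_n,\bY_{nm},\mathcal M_{\tilde k},\tilde\Theta_{\tilde k})$ and apply Theorem \ref{theorem:consistency_T}, which gives convergence to $0$ as $m\to\infty$ and $n\to\infty$. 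Summing the three bounds yields $v_{\tilde k nm}\to0$.

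The main obstacle is the bookkeeping that links the conditional-on-$\mathcal M_k$ posterior statements supplied by the earlier results to the mixture posterior that defines $v_{knm}$. The crucial observation, which I would state precisely, is that under the mixture (\ref{eq:mix1}) one has $f(\bY_{nm}\mid\bX_n,\theta,\zeta=k)=f(\bY_{nm}\mid\bX_n,\theta_k,\mathcal M_k)$, so conditionally on $\{\zeta=k\}$ the data inform $\theta_k$ only through that single component; consequently the conditional laws $\pi(\cdot\mid\bX_n,\bY_{nm},\zeta=k)$ for $\theta_k$ (and for the associated discrepancy event $C_k$) coincide with the single-model quantities $\pi(\cdot\mid\bX_n,\bY_{nm},\mathcal M_k,\tilde\Theta_k)$ to which Theorems \ref{theorem:inv_cons} and \ref{theorem:consistency_T} apply. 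Once this identification is made explicit, the remaining steps are the elementary union bound $\pi(H_{1\tilde k}\mid\cdot)\le\pi(A_{\tilde k}^c\mid\cdot)+\pi(B_{\tilde k}^c\mid\cdot)+\pi(C_{\tilde k}^c\mid\cdot)$ and substitution of the already-established limits, with care taken over the iterated-limit order $\lim_{m\to\infty}\lim_{n\to\infty}$ and the fact that the $\zeta$-terms already converge for each fixed $m\ge1$.
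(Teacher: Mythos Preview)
Your proposal is correct and follows essentially the same route as the paper: both treat $k\neq\tilde k$ by bounding through $\pi(\zeta=k\mid\bX_n,\bY_{nm})\to 0$ via Theorem~\ref{theorem:bf1}, and treat $k=\tilde k$ by decomposing $H_{1\tilde k}$ into the same three disjuncts and controlling them with Theorem~\ref{theorem:bf1}, (\ref{eq:shalizi_conv1}), and Theorem~\ref{theorem:consistency_T} respectively. Your version is slightly tidier for $k\neq\tilde k$ (you bound $\pi(H_{0k})$ directly rather than summing the three pieces of $v_{knm}$) and more explicit about the identification of the $\{\zeta=k\}$-conditional posterior with the single-model posterior, but otherwise the arguments coincide.
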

\begin{proof}
%\begin{equation}
%	H_{1k}:\left\{p_k\leq 1-\eta\right\}\bigcup\left\{p_k>1-\eta,\theta_k\in\tilde\Theta^c_k\right\}\bigcup\left\{p_k>1-\eta,\theta_k\in\tilde\Theta_k,
%	\frac{\left|T(\tilde \bX_n)-T(\bX_n)\right|}{\sqrt{V_k\left(T(\tilde \bX_n)|\bY_n\right)}}>\varepsilon\right\}.
%	\label{eq:H1}
%\end{equation}
First, let $k\neq\tilde k$. Then 
\begin{align}
v_{knm}&=\pi\left(\zeta\neq k|\bX_n,\bY_{nm}\right)
	+\pi\left(\zeta=k,\theta_k\in\tilde\Theta^c_k|\bX_n,\bY_{nm}\right)\notag\\
	&+\pi\left(\zeta=k,\theta_k\in\tilde\Theta_k,
	%\frac{\left|T^{(k)}(\tilde \bX_n)-T^{(k)}(\bX_n)\right|}
	%{\sqrt{Var\left(T^{(k)}(\tilde \bX_n)|\bX_n,\bY_{nm},\mathcal M_k,\tilde\Theta_k\right)}}\in (a_k-\varepsilon,a_k+\varepsilon)^c
	%\bigg |\bX_n,\bY_{nm}
	T^{(k)}(\tilde \bX_n)-T^{(k)}(\bX_n)\in [\tilde\ell_{knm}-a_k-\varepsilon,\tilde u_{knm}-a_k+\varepsilon]^c\bigg |\bX_n,\bY_{nm}
	\right).
\end{align}
	Since $k\neq\tilde k$, it follows due to (\ref{eq:post_p}) that for any $m\geq 1$, as $n\rightarrow\infty$,
	\begin{equation}
		\pi\left(\zeta\neq k|\bX_n,\bY_{nm}\right)=\pi\left(\zeta=\tilde k|\bX_n,\bY_{nm}\right)
		+\sum_{j\neq k,\tilde k}\pi\left(\zeta\neq k|\bX_n,\bY_{nm}\right)\stackrel{a.s.}{\longrightarrow} 1.
		\label{eq:conv1}
	\end{equation}
	Using (\ref{eq:post_p}) again it follows that for any $m\geq 1$,
	\begin{align}
		&\pi\left(\zeta=k,\theta_k\in\tilde\Theta^c_k|\bX_n,\bY_{nm}\right)\leq \pi\left(\zeta=k|\bX_n,\bY_{nm}\right)
		\stackrel{a.s.}{\longrightarrow} 0,~\mbox{as}~n\rightarrow\infty\label{eq:term2}
	\end{align}
	and
	\begin{align}
		&\pi\left(\zeta=k,\theta_k\in\tilde\Theta_k,
		%\frac{\left|T^{(k)}(\tilde \bX_n)-T^{(k)}(\bX_n)\right|}
		%{\sqrt{Var\left(T^{(k)}(\tilde \bX_n)|\bX_n,\bY_{nm},\mathcal M_k,\tilde\Theta_k\right)}}\in (a_k-\varepsilon,a_k+\varepsilon)^c
		T^{(k)}(\tilde \bX_n)-T^{(k)}(\bX_n)\in [\tilde\ell_{knm}-a_k-\varepsilon,\tilde u_{knm}-a_k+\varepsilon]^c
		\bigg |\bX_n,\bY_{nm}\right)\notag\\
		&\qquad\qquad\leq \pi\left(\zeta=k|\bX_n,\bY_{nm}\right)\stackrel{a.s.}{\longrightarrow} 0,~\mbox{as}~n\rightarrow\infty\label{eq:term3}.
	%	&\qquad=\sum_{r=1}^K\pi\left(\zeta=r|\bX_n,\bY_{nm}\right)\pi(p_k>1-\eta|\zeta=r)
	%	\pi\left(\theta_k\in\tilde\Theta^c_k\big |\bX_n,\bY_{nm},\zeta=r\right)\notag\\
	%	&\qquad\leq\sum_{r=1}^K\pi\left(\zeta=r|\bX_n,\bY_{nm}\right)\pi\left(\theta_k\in\tilde\Theta^c_k\big |\bX_n,\bY_{nm},\zeta=r\right)\notag\\
	%	&\qquad\leq \pi\left(\theta_k\in\tilde\Theta^c_k\big |\bX_n,\bY_{nm},\zeta=k\right)\notag\\
	%	&\qquad+\sum_{r=1,r\neq k}^K\pi\left(\zeta=r|\bX_n,\bY_{nm}\right)\pi(p_k>1-\eta|\zeta=r)\pi\left(\theta_k\in\tilde\Theta^c_k\big |\bX_n,\bY_{nm},\zeta=r\right)
	\end{align}
	Results (\ref{eq:conv1}), (\ref{eq:term2}) and (\ref{eq:term3}) imply that if $k\neq\tilde k$, then for any $m\geq 1$, 
	\begin{equation}
		v_{knm}\stackrel{a.s.}{\longrightarrow} 1,~\mbox{as}~n\rightarrow\infty.
		\label{eq:conv_v1}
	\end{equation}
	Now let us obtain the limit of $v_{knm}$ when $k=\tilde k$. By (\ref{eq:post_p}), 
	\begin{equation}
		\pi\left(\zeta\neq \tilde k|\bX_n,\bY_{nm}\right)\stackrel{a.s.}{\longrightarrow} 0,~\mbox{as}~n\rightarrow\infty.
		\label{eq:conv2}
	\end{equation}
	%Now since $\tilde\Theta^c_{\tilde k}\subset\Theta_{\tilde k}$, $h_{\tilde k}\left(\tilde\Theta^c_{\tilde k}\right)>h_{\tilde k}\left(\Theta_{\tilde k}\right)$.
	%Hence, from (\ref{eq:post_conv1}) it follows that for any $m\geq 1$, 
	%\begin{equation}
	%	\pi\left(\theta_{\tilde k}\in\tilde\Theta^c_{\tilde k}|\bX_n,\bY_{nm}\right)\stackrel{a.s.}{\longrightarrow} 0,~\mbox{as}~n\rightarrow\infty.
	%	\label{eq:shalizi_conv1}
	%\end{equation}
	For any $m\geq 1$, using (\ref{eq:shalizi_conv1}) we obtain
	\begin{align}
		&\pi\left(\zeta=\tilde k,\theta_{\tilde k}\in\tilde\Theta^c_{\tilde k}|\bX_n,\bY_{nm}\right)
		\leq \pi\left(\theta_{\tilde k}\in\tilde\Theta^c_{\tilde k}|\bX_n,\bY_{nm}\right)
		\stackrel{a.s.}{\longrightarrow} 0,~\mbox{as}~n\rightarrow\infty.\label{eq:term22}
	\end{align}
	Now note that
	\begin{align}
		&\pi\left(\zeta=\tilde k,\theta_{\tilde k}\in\tilde\Theta_{\tilde k},
		%\frac{\left|T^{(\tilde k)}(\tilde \bX_n)-T^{(\tilde k)}(\bX_n)\right|}
		%{\sqrt{Var\left(T^{(\tilde k)}(\tilde \bX_n)|\bX_n,\bY_{nm},\mathcal M_{\tilde k},\tilde\Theta_{\tilde k}\right)}}
		%\in (a_{\tilde k}-\varepsilon,a_{\tilde k}+\varepsilon)^c
		T^{(\tilde k)}(\tilde \bX_n)-T^{(\tilde k)}(\bX_n)\in [\tilde\ell_{\tilde knm}-a_{\tilde k}-\varepsilon,\tilde u_{\tilde knm}-a_{\tilde k}+\varepsilon]^c
	        \bigg |\bX_n,\bY_{nm}\right)\notag\\
		&=\pi\left(\zeta=\tilde k|\bX_n,\bY_{nm}\right)\notag\\
		&\qquad\times\pi\left(\theta_{\tilde k}\in\tilde\Theta_{\tilde k},
		%\frac{\left|T^{(\tilde k)}(\tilde \bX_n)-T^{(\tilde k)}(\bX_n)\right|}
		%{\sqrt{Var\left(T^{(\tilde k)}(\tilde \bX_n)|\bX_n,\bY_{nm},\mathcal M_{\tilde k},\tilde\Theta_{\tilde k}\right)}}
		%\in (a_{\tilde k}-\varepsilon,a_{\tilde k}+\varepsilon)^c
		T^{(\tilde k)}(\tilde \bX_n)-T^{(\tilde k)}(\bX_n)\in [\tilde\ell_{\tilde knm}-a_{\tilde k}-\varepsilon,\tilde u_{\tilde knm}-a_{\tilde k}+\varepsilon]^c
		\bigg |\bX_n,\bY_{nm},\zeta=\tilde k\right)\notag\\
		&\leq \pi\left(
		%\frac{\left|T^{(\tilde k)}(\tilde \bX_n)-T^{(\tilde k)}(\bX_n)\right|}
		%{\sqrt{Var\left(T^{(\tilde k)}(\tilde \bX_n)|\bX_n,\bY_{nm},\mathcal M_{\tilde k},\tilde\Theta_{\tilde k}\right)}}
		%\in (a_{\tilde k}-\varepsilon,a_{\tilde k}+\varepsilon)^c
		T^{(\tilde k)}(\tilde \bX_n)-T^{(\tilde k)}(\bX_n)\in [\tilde\ell_{\tilde knm}-a_{\tilde k}-\varepsilon,\tilde u_{\tilde knm}-a_{\tilde k}+\varepsilon]^c
		\bigg|\bX_n,\bY_{nm},\zeta=\tilde k\right)\notag\\
		&	\stackrel{a.s.}{\longrightarrow}0,~\mbox{as}~m\rightarrow\infty,~n\rightarrow\infty,~\mbox{due to (\ref{eq:consistency_T}).}\label{eq:term32}
	\end{align}
	From (\ref{eq:conv2}), (\ref{eq:term22}) and (\ref{eq:term32}) it follows that
	\begin{equation}
		v_{\tilde knm}\stackrel{a.s.}{\longrightarrow} 0,~\mbox{as}~m\rightarrow\infty,n\rightarrow\infty.
		\label{eq:conv_v1_tilde}
	\end{equation}
The limits (\ref{eq:conv_v1}) and (\ref{eq:conv_v1_tilde}) show that (\ref{eq:post_v2}) holds.
\end{proof}

\section{Asymptotic optimality theory for our multiple testing procedure}
\label{sec:optimality}
%With the above notations, in this section we show that our multiple procedure is asymptotically consistent under any general dependent model satisfying the conditions 
%in Section \ref{subsec:assumptions_shalizi}. 
%Let us first formally define what we mean by asymptotic consistency of a multiple testing procedure when

Let $h_{\tilde k}\left(\Theta_{\tilde k}\right)=\min\{h_k\left(\Theta_k\right):k=1,\ldots,K\}$.
Also let us define $\tilde\bd=(\tilde d_1,\ldots,\tilde d_K)$, where
\begin{equation}
	\tilde d_{k}=\begin{cases} 1 & \mbox{if}~k\neq \tilde k\\ 0 & \mbox{if}~k=\tilde k.
	\end{cases}
	\label{eq:tilde_d}
\end{equation}

\begin{definition}
A multiple testing method for the inverse model selection is said to be asymptotically optimal for which
	\begin{equation*}
		\lim_{m\rightarrow\infty} \lim_{n\rightarrow\infty} \delta(\tilde\bd|\bX_n,\bY_{nm}) \stackrel{a.s.}{=}1.
	\end{equation*}
	\label{def:optimal}
\end{definition}
Recall the constant $\beta_{nm}$ in (\ref{eq:beta1}), which is the penalizing constant between the error $E$ and true positives $TP$. 
%Also $mpBFDR$ is decreasing in $\beta_n$ for any fixed sample size $\beta_n$ \ctp{chandra2017}.
For consistency of the non-marginal procedure, we need certain conditions on $\beta_n$, which we state below. These conditions will also play important roles in the asymptotic studies
of the different versions of $FDR$ and $FNR$ that we consider.
%Also for \textcolor{red}{consistent} definitions of the multiple testing error measures, we need condition on the true decision configuration $\bd^t$. We state the conditions below:
\begin{enumerate}[label={(A\arabic*)}]	
	\item \label{A1} 
		We assume that the sequence $\beta_{nm}$ is neither too small nor too large, that is,
	\begin{align}
		\underline\beta&=\underset{m\geq 1,n\geq 1}{\liminf}~\beta_{nm}>0;\label{eq:liminf_beta}\\
		\overline\beta&=\underset{m\geq 1,n\geq 1}{\limsup}~\beta_{nm}<1.\label{eq:limsup_beta}
	\end{align}
	%\item \label{A2} We assume that neither all the null hypotheses are true and nor all of then are false, that is, $\bd^t\neq\bzero$ and $\bd^t\neq\bone$, where
	%$\bzero$ and $\bone$ are vectors of 0's and 1's respectively.
\end{enumerate}
With this conditions we propose and prove the following results.
\begin{theorem}
	\label{theorem:asymp_opt}
	Let $\delta(\cdot|\bX_n,\bY_{nm})$ denote the decision rule given data $\bX_n$ and $\bY_{nm}$. %Assume conditions \ref{shalizi1}--\ref{shalizi7}.
	Assume the conditions of Theorem \ref{theorem:conv_vknm} and condition \ref{A1} on $\beta_{nm}$. Then the decision procedure is asymptotically optimal.
\end{theorem}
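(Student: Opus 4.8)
The plan is to reduce the statement about the decision probability $\delta(\tilde\bd|\bX_n,\bY_{nm})$ to a deterministic comparison of the posterior quantities $v_{knm}$ against the cutoff $\beta_{nm}$, and then feed in the limits already established in Theorem \ref{theorem:conv_vknm}. Since the procedure here is the non-randomized rule (\ref{eq:optimal_decision}), for each realization of the data $\delta$ puts all its mass on the single configuration $\widehat\bd=(\widehat d_1,\ldots,\widehat d_K)$ with $\widehat d_k = I(v_{knm}>\beta_{nm})$. Hence $\delta(\tilde\bd|\bX_n,\bY_{nm})=I(\widehat\bd=\tilde\bd)=\prod_{k=1}^K I(\widehat d_k=\tilde d_k)$, and, recalling the definition (\ref{eq:tilde_d}) of $\tilde\bd$, it suffices to show that in the iterated limit, almost surely, $v_{knm}>\beta_{nm}$ for every $k\neq\tilde k$ while $v_{\tilde knm}\leq\beta_{nm}$.

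First I would treat the indices $k\neq\tilde k$. For each fixed $m\geq 1$, Theorem \ref{theorem:conv_vknm} (specifically the convergence $v_{knm}\stackrel{a.s.}{\longrightarrow}1$ as $n\rightarrow\infty$) together with the upper bound $\overline\beta=\limsup_{m,n}\beta_{nm}<1$ from condition \ref{A1} guarantees that, almost surely, $v_{knm}>\beta_{nm}$ for all sufficiently large $n$; thus $\widehat d_k=1=\tilde d_k$ in the limit. Next, for $k=\tilde k$, I would invoke the double-limit part of Theorem \ref{theorem:conv_vknm}, namely $v_{\tilde knm}\stackrel{a.s.}{\longrightarrow}0$ as $m\rightarrow\infty,n\rightarrow\infty$, and combine it with the lower bound $\underline\beta=\liminf_{m,n}\beta_{nm}>0$ from condition \ref{A1} to conclude that eventually $v_{\tilde knm}<\beta_{nm}$, so that $\widehat d_{\tilde k}=0=\tilde d_{\tilde k}$. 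Because $K<\infty$, the intersection of these finitely many almost sure events is itself almost sure, so $\widehat\bd=\tilde\bd$ in the iterated limit, whence $\delta(\tilde\bd|\bX_n,\bY_{nm})\stackrel{a.s.}{\longrightarrow}1$ and the procedure is asymptotically optimal in the sense of Definition \ref{def:optimal}.

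The main obstacle I anticipate is the careful bookkeeping of the iterated limit $\lim_{m\rightarrow\infty}\lim_{n\rightarrow\infty}$ rather than a joint limit. The two cases are genuinely asymmetric: for $k\neq\tilde k$ the inner limit in $n$ already yields $v_{knm}\to 1$ for every fixed $m$, whereas for $k=\tilde k$ the convergence $v_{\tilde knm}\to 0$ is only available after first letting $n\rightarrow\infty$ and then $m\rightarrow\infty$ (it rests on the discrepancy-measure consistency in Theorem \ref{theorem:consistency_T}). I would therefore perform the comparison against $\beta_{nm}$ in the same iterated order, using $\underline\beta>0$ and $\overline\beta<1$ to separate the limiting values $0$ and $1$ of the $v_{knm}$ from the cutoff uniformly; the role of condition \ref{A1} is precisely to ensure $\beta_{nm}$ never drifts to the boundary and spoils either comparison.
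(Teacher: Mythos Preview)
Your proposal is correct and follows essentially the same route as the paper: reduce $\delta(\tilde\bd|\bX_n,\bY_{nm})=1$ to the componentwise comparisons $v_{knm}>\beta_{nm}$ for $k\neq\tilde k$ and $v_{\tilde knm}<\beta_{nm}$, then invoke Theorem~\ref{theorem:conv_vknm} together with the bounds $\underline\beta>0$ and $\overline\beta<1$ from \ref{A1}, and use $K<\infty$ to pass to a common threshold. Your explicit discussion of the iterated limit and the asymmetry between the cases $k\neq\tilde k$ (inner limit in $n$ suffices) and $k=\tilde k$ (full double limit needed) is in fact more careful than the paper's presentation, which simply writes ``for $m\geq m_k$ and $n\geq n_k$'' without distinguishing the two regimes.
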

\begin{proof}
	Due to \ref{A1}, given $\epsilon_1>0$, there exist $m_0\geq 1$ and $n_0\geq 1$ such that for $m\geq m_0$ and $n\geq n_0$, 
	\begin{equation}
	0<\underline\beta-\epsilon_1<\beta_{nm}<\overline\beta+\epsilon_1<1.
		\label{eq:beta_bound}
	\end{equation}
	By (\ref{eq:post_v2}), for any $0<\epsilon_2<1-\overline\beta-\epsilon_1$, for $k\neq \tilde k$, there exist $m_k\geq 1$ and $n_k\geq 1$ 
	such that for $m\geq m_k$ and $n\geq n_k$,
	\begin{equation}
		v_{knm}>1-\epsilon_2>\overline\beta+\epsilon_1.
		\label{eq:post_v_bound1}
	\end{equation}
	Also, for $0<\epsilon_3<\underline\beta-\epsilon_1$, there exist $m_{\tilde k}\geq 1$ and $n_{\tilde k}\geq 1$
	such that for $m\geq m_{\tilde k}$ and $n\geq n_{\tilde k}$,
	\begin{equation}
		v_{\tilde knm}<\epsilon_3<\underline\beta-\epsilon_1.
		\label{eq:post_v_bound2}
	\end{equation}
	Let $\tilde m=\max\{m_0,m_1,\ldots,m_K\}$ and $\tilde n=\max\{n_0,n_1,\ldots,n_K\}$. Then it can be seen from (\ref{eq:beta_bound}), (\ref{eq:post_v_bound1})
	and (\ref{eq:post_v_bound2}) that for $m\geq\tilde m$ and $n\geq\tilde n$ the following hold almost surely:
	\begin{align}
		v_{knm}&>\beta_{nm},~\mbox{if}~k\neq\tilde k;\label{eq:post_v_lower_bound}\\
		v_{knm}&<\beta_{nm},~\mbox{if}~k=\tilde k.\label{eq:post_v_upper_bound}
	\end{align}
	Using (\ref{eq:post_v_lower_bound}) and (\ref{eq:post_v_upper_bound}) in (\ref{eq:optimal_decision}) shows that
	for $m\geq\tilde m$ and $n\geq\tilde n$,
	\begin{equation}
		\widehat d_k=\begin{cases}1 & \mbox{if}~k\neq\tilde k;\\ 0 & \mbox{if}~k=\tilde k.\end{cases}
			\label{eq:opt_d_asymp}
	\end{equation}
	In other words, almost surely, $\widehat\bd=\tilde\bd$ for $m\geq\tilde m$ and $n\geq\tilde n$. This completes the proof.
%	This is immediate from (\ref{eq:optimal_decision}), (\ref{eq:post_v2}) and \ref{A1}.
\end{proof}

\begin{remark}
	\label{remark:remark_expectation_delta}
Since $\delta(\cdot|\bX_n,\bY_{nm})$ is an indicator function, the following also holds:
	\begin{equation*}
		\lim_{m\rightarrow\infty}\lim_{n\rightarrow\infty} E_{\bY_{nm}|\bX_n}\left[\delta(\tilde\bd|\bX_n,\bY_{nm}) \right]=1.
	\end{equation*}	
\end{remark}
%We have already mentioned that the optimal decision rules corresponding to the loss function in (\ref{eq:loss_mul}) is a special case of the non-marginal method when dependence among the hypotheses is ignored. As we have not considered any particular structure of $G_i$'s in Theorem \ref{th:nmd_consistent}, consistency of the additive loss-function based method can also be obtained from the previous theorem.
%\begin{corollary}
%	Assuming condition \ref{A1}, the optimal decision rule corresponding to the additive loss function (\ref{eq:loss_mul}) is asymptotically consistent.
%\end{corollary}

%The condition \ref{A1} is necessary for the asymptotic consistency of both the non-marginal method and additive loss function based method. This ensures that the penalizing constant is asymptotically bounded away from 0 and 1, that is, it is neither too small nor too large. 
%Notably, \ref{A2} is not required for the consistency results. The role of \ref{A2} is to ensure that the denominator terms in the multiple testing error measures (defined in Section \ref{subsec:Bayesian_errors}) do not become 0. 

\section{Asymptotic theory of the error measures}
\label{sec:error_asymptotics}

\subsection{Convergence of versions of $FDR$ and $FNR$}
\label{subsec:fdr_fnr_conv}

\begin{theorem}
	\label{theorem:fdr_conv1}
	%Let $\delta(\cdot|\bX_n,\bY_{nm})$ denote the decision rule given data $\bX_n$ and $\bY_{nm}$. %Assume conditions \ref{shalizi1}--\ref{shalizi7}.
	Assume the conditions of Theorem \ref{theorem:conv_vknm} and condition \ref{A1} on $\beta_{nm}$. Then 
	\begin{align}
		&\underset{m\rightarrow\infty}{\lim}\underset{n\rightarrow\infty}{\lim}~cFDR_{nm}\stackrel{a.s.}{=}0;\label{eq:cfdr_conv}\\
		&\underset{m\rightarrow\infty}{\lim}\underset{n\rightarrow\infty}{\lim}~pBFDR_{nm}~{=}~0.\label{eq:pbfdr_conv}
	\end{align}
\end{theorem}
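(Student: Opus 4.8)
The plan is to reduce both identities to two facts already established: the asymptotic optimality of the decision rule (Theorem~\ref{theorem:asymp_opt}), which identifies the rejected set, and the limiting values of the posterior probabilities $v_{knm}$ (Theorem~\ref{theorem:conv_vknm}), which control the summands appearing in the $FDR$ expressions.

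For $cFDR_{nm}$, I would begin by noting that under a non-randomized rule $\delta(\cdot|\bX_n,\bY_{nm})$ is degenerate at the single optimal configuration $\widehat\bd$, so the posterior $FDR$ in (\ref{eq:pBFDR}) collapses to
\begin{equation*}
	cFDR_{nm}=\frac{\sum_{k=1}^K\widehat d_k(1-v_{knm})}{\sum_{k=1}^K\widehat d_k\vee 1}.
\end{equation*}
By Theorem~\ref{theorem:asymp_opt} there exist, almost surely, (realization-dependent) thresholds beyond which $\widehat\bd=\tilde\bd$, i.e.\ $\widehat d_k=1$ for all $k\neq\tilde k$ and $\widehat d_{\tilde k}=0$. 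Substituting $\tilde\bd$ and using $K>1$, so that $\sum_k\tilde d_k\vee 1=K-1$, reduces the expression to $\frac{1}{K-1}\sum_{k\neq\tilde k}(1-v_{knm})$ for all large $m,n$. Since Theorem~\ref{theorem:conv_vknm} gives $v_{knm}\stackrel{a.s.}{\longrightarrow}1$ for each $k\neq\tilde k$, this finite sum tends to zero almost surely, yielding (\ref{eq:cfdr_conv}) after taking $\lim_m\lim_n$.

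For $pBFDR_{nm}=E_{\bY_{nm}|\bX_n}[cFDR_{nm}]$, the idea is to push the a.s.\ convergence just obtained through the expectation by bounded convergence. Because $0\le v_{knm}\le 1$, the ratio defining $cFDR_{nm}$ always lies in $[0,1]$, so it is uniformly bounded regardless of $m,n$. Working on the common probability space of the infinite data sequence underlying Shalizi's framework, I would fix $m$ and let $n\to\infty$, invoking bounded convergence to get $\lim_n pBFDR_{nm}=E[\lim_n cFDR_{nm}]$, and then let $m\to\infty$, applying bounded convergence a second time to the $[0,1]$-valued sequence $\lim_n cFDR_{nm}$, which vanishes a.s. This gives $\lim_m\lim_n pBFDR_{nm}=0$, which is (\ref{eq:pbfdr_conv}).

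I expect the genuine obstacle to be the limit--expectation interchange for $pBFDR_{nm}$ rather than the algebra. The thresholds supplied by Theorem~\ref{theorem:asymp_opt} are random, so one cannot assert ``$cFDR_{nm}=\frac{1}{K-1}\sum_{k\neq\tilde k}(1-v_{knm})$ for large $m,n$'' inside an expectation; the safe route is to use only the deterministic bound $cFDR_{nm}\in[0,1]$ together with the already-proven a.s.\ convergence, and to apply bounded convergence once per limit, which sidesteps the randomness of the thresholds entirely. The one point needing care is that $\lim_n cFDR_{nm}$ exists almost surely for each fixed $m$, so that the inner expectation limit is well defined; this follows from the $n\to\infty$ convergence of $v_{knm}$ and of the decision $\widehat d_{\tilde k}$.
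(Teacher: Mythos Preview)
Your proposal is correct and follows essentially the same approach as the paper: you reduce $cFDR_{nm}$ to $\frac{1}{K-1}\sum_{k\neq\tilde k}(1-v_{knm})$ via Theorem~\ref{theorem:asymp_opt}, send this to zero via Theorem~\ref{theorem:conv_vknm}, and then pass to $pBFDR_{nm}$ using the uniform bound $cFDR_{nm}\in[0,1]$. The paper phrases the last step as ``uniform integrability'' where you say ``bounded convergence,'' and it writes the $\delta(\tilde\bd|\cdot)$ split explicitly rather than invoking non-randomization, but these are cosmetic differences; your discussion of the random thresholds and the need for $\lim_n cFDR_{nm}$ to exist is, if anything, more careful than the paper's one-line appeal.
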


\begin{proof}
From (\ref{eq:pBFDR}) observe that
	\begin{equation}
		cFDR_{nm}=\frac{\sum_{k=1}^{K}\tilde d_k(1-v_{knm})}{\sum_{k=1}^{K}\tilde d_k \vee 1}\delta(\tilde\bd|\bX_n,\bY_{nm})
		+\sum_{\bd\neq\tilde\bd\in\mathbb{D}}\frac{\sum_{k=1}^{K}d_k(1-v_{knm})}{\sum_{k=1}^{K}d_k \vee 1}\delta(\bd|\bX_n,\bY_{nm})
		\label{eq:fdr1}
	\end{equation}
	The proof of Theorem \ref{theorem:asymp_opt} shows that there exist $\tilde m\geq 1$ and $\tilde n\geq 1$ such that
	$\delta(\tilde\bd|\bX_n,\bY_{nm})=1$ almost surely for $m\geq\tilde m$ and $n\geq\tilde n$. This, combined with (\ref{eq:fdr1}) %and (\ref{eq:post_v2}) 
	shows that for $m\geq\tilde m$ and $n\geq\tilde n$, almost surely,
	\begin{equation}
		cFDR_{nm}=\frac{\sum_{k=1}^{K}\tilde d_k(1-v_{knm})}{\sum_{k=1}^{K}\tilde d_k \vee 1}=\frac{\sum_{k\neq\tilde k}(1-v_{knm})}{K-1}.
		%\rightarrow 0,~\mbox{as}~m\rightarrow\infty,~n\rightarrow\infty.
		\label{eq:fdr2}
	\end{equation}
	Applying (\ref{eq:post_v2}) to the right most side of (\ref{eq:fdr2}) shows that
	$$cFDR_{nm}\stackrel{a.s.}{\longrightarrow}0,~\mbox{as}~m\rightarrow\infty,~n\rightarrow\infty,$$
	establishing (\ref{eq:cfdr_conv}).

	Since $cFDR_{nm}<1$ almost surely, (\ref{eq:pbfdr_conv}) follows from (\ref{eq:cfdr_conv}) by uniform integrability.
\end{proof}

\begin{theorem}
	\label{theorem:fnr_conv1}
	%Let $\delta(\cdot|\bX_n,\bY_{nm})$ denote the decision rule given data $\bX_n$ and $\bY_{nm}$. %Assume conditions \ref{shalizi1}--\ref{shalizi7}.
	Assume the conditions of Theorem \ref{theorem:conv_vknm} and condition \ref{A1} on $\beta_{nm}$. Then 
	\begin{align}
		&\underset{m\rightarrow\infty}{\lim}\underset{n\rightarrow\infty}{\lim}~cFNR_{nm}\stackrel{a.s.}{=}0;\label{eq:cfnr_conv}\\
		&\underset{m\rightarrow\infty}{\lim}\underset{n\rightarrow\infty}{\lim}~pBFNR_{nm}~{=}~0.\label{eq:pbfnr_conv}
	\end{align}
\end{theorem}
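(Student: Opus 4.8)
The plan is to mirror the proof of Theorem \ref{theorem:fdr_conv1}, exploiting asymptotic optimality to collapse the sum over decision configurations to a single surviving term. First I would expand $cFNR_{nm}$ via (\ref{eq:pBFNR}), writing it as $\sum_{\bd\in\mathbb D}\frac{\sum_{k=1}^K(1-d_k)v_{knm}}{\sum_{k=1}^K(1-d_k)\vee 1}\delta(\bd|\bX_n,\bY_{nm})$. Under the non-randomized decision rule, $\delta(\cdot|\bX_n,\bY_{nm})$ is an indicator, so at most one term in the sum is nonzero for each realization of the data.

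Next I would invoke the proof of Theorem \ref{theorem:asymp_opt}, which supplies $\tilde m\geq 1$ and $\tilde n\geq 1$ such that $\delta(\tilde\bd|\bX_n,\bY_{nm})\stackrel{a.s.}{=}1$ for all $m\geq\tilde m$ and $n\geq\tilde n$, where $\tilde\bd$ is as in (\ref{eq:tilde_d}). For such $m,n$ the entire sum therefore collapses to the single term indexed by $\tilde\bd$. I would then substitute the components of $\tilde\bd$: since $1-\tilde d_k=0$ for $k\neq\tilde k$ and $1-\tilde d_{\tilde k}=1$, the numerator $\sum_{k=1}^K(1-\tilde d_k)v_{knm}$ reduces to its $k=\tilde k$ contribution $v_{\tilde knm}$, and the denominator $\sum_{k=1}^K(1-\tilde d_k)\vee 1$ reduces to $1$. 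Hence $cFNR_{nm}\stackrel{a.s.}{=}v_{\tilde knm}$ for $m\geq\tilde m$ and $n\geq\tilde n$.

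Finally, Theorem \ref{theorem:conv_vknm} yields $v_{\tilde knm}\stackrel{a.s.}{\longrightarrow}0$ as $m\rightarrow\infty$, $n\rightarrow\infty$, which establishes (\ref{eq:cfnr_conv}). Because $cFNR_{nm}<1$ almost surely, the convergence (\ref{eq:pbfnr_conv}) of $pBFNR_{nm}$ then follows from (\ref{eq:cfnr_conv}) by uniform integrability, exactly paralleling the passage from $cFDR_{nm}$ to $pBFDR_{nm}$ in Theorem \ref{theorem:fdr_conv1}.

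There is no genuinely hard step here; the argument is a near-verbatim dual of the $FDR$ proof, with the roles of $d_k$ and $1-d_k$ interchanged. The only point demanding care is the bookkeeping in the collapse of step two: one must confirm that it is precisely the $k=\tilde k$ index that survives in both numerator and denominator (so that the limit is driven by $v_{\tilde knm}\to 0$ rather than by the $k\neq\tilde k$ terms, which converge to $1$), and that the $\vee 1$ in the denominator is inactive because exactly one coordinate of $\tilde\bd$ equals $0$. The reliance on a non-randomized rule to force $\delta(\cdot|\bX_n,\bY_{nm})$ to be an indicator is what makes the single-term reduction legitimate.
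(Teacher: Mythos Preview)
Your proposal is correct and follows essentially the same approach as the paper's own proof: expand $cFNR_{nm}$ via (\ref{eq:pBFNR}), use the proof of Theorem \ref{theorem:asymp_opt} to collapse the sum to the single $\tilde\bd$ term for $m\geq\tilde m$, $n\geq\tilde n$, reduce that term to $v_{\tilde knm}$, apply (\ref{eq:post_v2}), and finish with uniform integrability. The bookkeeping points you flag (survival of the $k=\tilde k$ index, the $\vee 1$ being inactive) are exactly the small checks the paper's proof implicitly makes.
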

\begin{proof}
	It follows from (\ref{eq:pBFNR}) and the proof of Theorem \ref{theorem:asymp_opt} that there exist $\tilde m\geq 1$ and $\tilde n\geq 1$ such that
	for $m\geq\tilde m$ and $n\geq\tilde n$, almost surely,
	\begin{align}
		cFNR_{nm}&=\frac{\sum_{k=1}^{K}(1-\tilde d_k)v_{knm}}{\sum_{k=1}^{K}(1-\tilde d_k) \vee 1}\delta(\tilde\bd|\bX_n,\bY_{nm})
		+\sum_{\bd\neq\tilde\bd\in\mathbb{D}}\frac{\sum_{k=1}^{K}(1-d_k)v_{knm}}{\sum_{k=1}^{K}(1-d_k) \vee 1}\delta(\bd|\bX_n,\bY_{nm})\notag\\
		&=\frac{\sum_{k=1}^{K}(1-\tilde d_k)v_{knm}}{\sum_{k=1}^{K}(1-\tilde d_k) \vee 1}=v_{\tilde k nm}.
		\label{eq:fnr1}
	\end{align}
	Application of (\ref{eq:post_v2}) to the right most side of (\ref{eq:fnr1}) yields
	$$cFNR_{nm}\stackrel{a.s.}{\longrightarrow}0,~\mbox{as}~m\rightarrow\infty,~n\rightarrow\infty,$$
	establishing (\ref{eq:cfnr_conv}).

	Again, (\ref{eq:pbfnr_conv}) follows from (\ref{eq:cfnr_conv}) by uniform integrability, since $cFNR_{nm}$ is almost surely
	bounded above by one.

\end{proof}

\subsection{Convergence of versions of $FNR$ when versions of $FDR$ are $\alpha$-controlled}
\label{subsec:alpha_control}

\begin{theorem}
	\label{theorem:fdr_alpha_control}
	Assume the conditions of Theorem \ref{theorem:conv_vknm}. %Then for any decision $\hat\bd$ such that $\hat d_{\tilde k}=1$, 
	%for any $\alpha\in\left(0,K^{-1}\right)$, there exists a 
	%for any 
	Then $\alpha=K^{-1}$ is the only asymptotic $FDR$ control possible in the sense that there exist sequences 
	$\beta_{nm}\rightarrow 0$ as $m\rightarrow\infty$ and $n\rightarrow\infty$ such that 
	the following hold:
	\begin{align}
		&\underset{m\rightarrow\infty}{\lim}\underset{n\rightarrow\infty}{\lim}~cFDR_{nm}\stackrel{a.s.}{=}K^{-1};\label{eq:cfdr_alpha1}\\
		&\underset{m\rightarrow\infty}{\lim}\underset{n\rightarrow\infty}{\lim}~pBFDR_{nm}~=~K^{-1}. %\alpha.
		\label{eq:pbfdr_alpha1}
	\end{align}
%	Also, for such decisions, $\beta_{nm}\rightarrow 0$ as $m\rightarrow\infty$ and $n\rightarrow\infty$.
\end{theorem}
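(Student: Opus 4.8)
The plan is to read off everything from the dichotomy proved in Theorem \ref{theorem:conv_vknm}, namely $v_{knm}\stackrel{a.s.}{\longrightarrow}1$ for $k\neq\tilde k$ and $v_{\tilde knm}\stackrel{a.s.}{\longrightarrow}0$, combined with the explicit optimal rule $\widehat d_k=I(v_{knm}>\beta_{nm})$ from (\ref{eq:optimal_decision}). The first point I would record is the mechanism that makes $K^{-1}$ special: a \emph{nonzero} asymptotic $FDR$ can arise only if the lone true null $H_{0\tilde k}$ is eventually rejected, i.e. $\widehat d_{\tilde k}=1$, which means $v_{\tilde knm}>\beta_{nm}$; since $v_{\tilde knm}\stackrel{a.s.}{\longrightarrow}0$, this is impossible unless $\beta_{nm}\to 0$. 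This is precisely why condition \ref{A1} forces $cFDR_{nm}\to 0$ in Theorem \ref{theorem:fdr_conv1}, and it singles out $\beta_{nm}\to 0$ as the only regime capable of producing a positive limit.

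Next I would exhibit an explicit sequence. Taking the data-adaptive threshold $\beta_{nm}=\tfrac{1}{2}\min_{1\le k\le K}v_{knm}$ is clean because it sidesteps any rate considerations: by construction $\beta_{nm}<v_{knm}$ for \emph{every} $k$ (whenever the minimum is positive), so $\widehat d_k=1$ for all $k$ and hence $\widehat\bd=\bone$, giving $\delta(\bone|\bX_n,\bY_{nm})=1$ and $\delta(\bd|\bX_n,\bY_{nm})=0$ for $\bd\neq\bone$. Moreover $\min_{1\le k\le K}v_{knm}\stackrel{a.s.}{=}v_{\tilde knm}$ for large $m,n$ by the dichotomy, so $\beta_{nm}\to 0$ in the iterated limit, as required by the statement.

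Substituting $\widehat\bd=\bone$ into (\ref{eq:pBFDR}) collapses the sum over $\mathbb D$ to a single term,
\begin{equation*}
	cFDR_{nm}=\frac{\sum_{k=1}^K(1-v_{knm})}{K},
\end{equation*}
and applying Theorem \ref{theorem:conv_vknm} term by term sends the numerator almost surely to $(1-0)+\sum_{k\neq\tilde k}(1-1)=1$, which yields (\ref{eq:cfdr_alpha1}). For (\ref{eq:pbfdr_alpha1}) I would use that $cFDR_{nm}\le 1$ almost surely, so bounded convergence applied to $pBFDR_{nm}=E_{\bY_{nm}|\bX_n}[cFDR_{nm}]$ transfers the almost sure limit to the limit in expectation.

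The part requiring genuine care is the uniqueness assertion that $K^{-1}$ is the \emph{only} attainable nonzero level, and this is the step I expect to be the main obstacle, since it is the real content beyond mere existence. The argument is that the limiting decision is essentially all-or-nothing: if $\widehat d_{\tilde k}=1$ then, because $v_{knm}\to 1>\beta_{nm}$ for every $k\neq\tilde k$, one cannot have $v_{knm}<\beta_{nm}<v_{\tilde knm}$ for any such $k$ at large $m,n$, so all remaining decisions are forced to $1$ and the configuration is pinned to exactly $\bone$, giving the limit $K^{-1}$; conversely any sequence with $\liminf\beta_{nm}>0$ falls under Theorem \ref{theorem:fdr_conv1} and gives $0$. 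Hence no intermediate limit in $(0,K^{-1})$ is achievable, and one must also note that a sequence making $\widehat d_{\tilde k}$ oscillate produces an oscillating $cFDR_{nm}$ with no limit at all, so $K^{-1}$ is indeed the unique positive value compatible with convergence.
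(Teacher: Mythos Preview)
Your proposal is correct and follows essentially the same route as the paper: both arguments hinge on the dichotomy from Theorem~\ref{theorem:conv_vknm}, observe that a nonzero asymptotic $FDR$ forces $\widehat d_{\tilde k}=1$ and hence $\beta_{nm}\to 0$, compute $cFDR_{nm}=K^{-1}\sum_{k}(1-v_{knm})\stackrel{a.s.}{\longrightarrow}K^{-1}$ once $\widehat\bd=\bone$, and pass to $pBFDR_{nm}$ by uniform integrability.

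The one tactical difference worth flagging is your existence construction. The paper simply takes $\beta_{nm}\equiv 0$, invoking the known monotonicity of $pBFDR_{nm}$ in $\beta_{nm}$ to argue this is the maximal-error case; this is deterministic and matches the usual interpretation of $\beta_{nm}$ as a tuning constant. Your data-adaptive choice $\beta_{nm}=\tfrac12\min_k v_{knm}$ is a random sequence, which is mathematically fine but slightly at odds with how $\beta_{nm}$ is treated elsewhere (e.g.\ in condition~\ref{A1}); if a deterministic sequence is wanted, $\beta_{nm}\equiv 0$ is both simpler and sufficient. Also, your appeal to Theorem~\ref{theorem:fdr_conv1} for the case $\liminf\beta_{nm}>0$ tacitly uses only half of \ref{A1}; the residual case $\limsup\beta_{nm}\ge 1$ is harmless (eventually $\widehat\bd=\bzero$ along that subsequence, giving $cFDR_{nm}=0$), but you might state it explicitly.
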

\begin{proof}
	%The proof will employ concepts proposed in \ctn{Chandra19} and \ctn{Chandra20}. 

	It follows from \ctn{Chandra19} (see also \ctn{Chandra20}) that $pBFDR_{nm}$ is continuous and decreasing in $\beta_{nm}$, for any given $m\geq 1$ and $n\geq 1$. 
	Hence, the maximum error given any $m\geq 1$ and $n\geq 1$ occurs when $\beta_{nm}=0$. Hence, in this case, for any given $m\geq 1$ and $n\geq 1$,
	for our multiple testing procedure we must maximize $\sum_{k=1}^Kd_kv_{knm}$ with respect to $\bd$. This of course yields $\hat d_k=1$, for $k=1,\ldots,K$.
	For this decision $\hat\bd$, we obtain using (\ref{eq:post_v2}): 
	\begin{equation}
		cFDR_{nm}=\frac{\sum_{k=1}^{K}\hat d_k(1-v_{knm})}{\sum_{k=1}^{K}\hat d_k \vee 1}=\frac{\sum_{k=1}^K(1-v_{knm})}{K}
		\stackrel{a.s.}{\longrightarrow} K^{-1},~\mbox{as}~m\rightarrow\infty,~n\rightarrow\infty.
		\label{eq:cfdr_alpha_max1}
	\end{equation}
	Uniform integrability and (\ref{eq:cfdr_alpha_max1}) shows that %for any $\hat\bd$ such that $d_{\hat k}=1$, 
	when $\beta_{nm}=0$ for any $m\geq 1$ and $n\geq 1$,
	\begin{equation}
		pBFDR_{nm}\rightarrow K^{-1},~\mbox{as}~m\rightarrow\infty,~n\rightarrow\infty.
		\label{eq:pbfdr_alpha_max1}
	\end{equation}
	Now consider any sequence $\beta_{nm}$ that yields any decision $\hat\bd$ such that $\hat d_{\tilde k}=1$ almost surely, for sufficiently large $m$ and $n$. 
	Note that $\hat d_{\tilde k}=1$ can occur only if $v_{\tilde k nm}>\beta_{nm}$. Since $v_{\tilde k nm}\stackrel{a.s.}{\longrightarrow}0$ by (\ref{eq:post_v2}),
	we must have $\beta_{nm}\rightarrow 0$ as $m\rightarrow\infty$ and $n\rightarrow\infty$ in such cases. Also since 
	$v_{k nm}\stackrel{a.s.}{\longrightarrow}1$ for $k\neq\tilde k$ due to (\ref{eq:post_v2}), it follows that $\hat d_k=1$ almost surely for large enough $m$ and $n$,
	for $k\neq\tilde k$.
	Hence, the limits (\ref{eq:cfdr_alpha_max1}) and (\ref{eq:pbfdr_alpha_max1}) continue to hold in all cases such that 
	$\hat d_{\tilde k}=1$, for sufficiently large $m$ and $n$.

	%Now note that if $\underset{m\rightarrow\infty,n\rightarrow\infty}{\lim\inf}~\beta_{nm}>0$ for such $\beta_{nm}$-sequences, then 
	%$cFDR_{nm}\stackrel{a.s.}{\longrightarrow} 0$ and
	%$pBFDR_{nm}\rightarrow 0$ as $m\rightarrow\infty$ and
	%$n\rightarrow\infty$, due to (\ref{eq:cfdr_conv}) and (\ref{eq:pbfdr_conv}), which would contradict (\ref{eq:cfdr_alpha_max1}) and 
	%(\ref{eq:pbfdr_alpha_max1}). Hence, we must have
	%$\beta_{nm}\rightarrow 0$ as $m\rightarrow\infty$ and $n\rightarrow\infty$, for such $\beta_{nm}$-sequences. 

	On the other hand, for any sequence $\beta_{nm}$ that yields any decision $\hat\bd$ such that $\hat d_{\tilde k}=0$ almost surely for sufficiently large $m$ and $n$,
	it is easily seen that $cFDR_{nm}\stackrel{a.s.}{\longrightarrow}0$ and $pBFDR_{nm}\rightarrow 0$, as $m\rightarrow\infty$ and $n\rightarrow\infty$.

	%Let $\epsilon< K^{-1}-\alpha$. Due to (\ref{eq:pbfdr_alpha_max1}), for $\beta_{nm}=0$, there exist $m_0\geq 1$ and $n_0\geq 1$ such that for $m\geq m_0$ and $n\geq n_0$,
	%$pFDR_{nm}>K^{-1}-\epsilon>\alpha$. Because of this and since for any $m\geq 1$ and $n\geq 1$, $pBFDR_{nm}$ is continuous and decreasing in $\beta_{nm}$, 
	%for all $m\geq m_0$ and $n\geq n_0$, there exists $\tilde\beta_{nm}\in (0,1)$
	%such that 
	%\begin{equation}
	%pBFDR_{nm}=\alpha.
	%	\label{eq:pbfdr_alpha_max2}
	%\end{equation}

	In other words, asymptotic control of $cFDR_{nm}$ and $pBFDR_{nm}$ is possible only at $\alpha=K^{-1}$.
\end{proof}

%\begin{remark}
%	\label{remark:cfdr_alpha_control}
%	It follows from \ctn{Chandra19} that $cFDR_{nm}$ is decreasing in $\beta_{nm}$, for any given $m\geq 1$ and $n\geq 1$, although continuity does not hold.
%	In fact, it can be easily seen that $cFDR_{nm}$ is a step function of $\beta_{nm}$. Hence, except for at most a countable number of points in $\left(0,K^{-1}\right)$, 
%	it is guaranteed that there exists $\tilde\beta_{nm}\in (0,1)$ for large enough $m$ and $n$ such that $cFDR_{nm}=\alpha\in\left(0,K^{-1}\right)$. 
%	%$cFDR_{nm}$ can get sufficiently close to $\alpha$.
%\end{remark}

\begin{theorem}
	\label{theorem:fnr_alpha_control}
	%Assume the conditions of Theorem \ref{theorem:conv_vknm}. 
	Assume that either of $cFDR_{nm}$ or $pBFDR_{nm}$ is asymptotically controlled at $\alpha=K^{-1}$.
	%for any $\alpha\in\left(0,K^{-1}\right)$, there exists a sequence $\beta_{nm}\rightarrow 0$ as $m\rightarrow\infty$ and $n\rightarrow\infty$ such that
	Then for sufficiently large $m$ and $n$,
	\begin{align}
		%&\underset{m\rightarrow\infty}{\lim}\underset{n\rightarrow\infty}{\lim}~cFDR_{nm}\stackrel{a.s.}{=}\alpha;\label{eq:cfdr_alpha1}\\
		%&\underset{m\rightarrow\infty}{\lim}\underset{n\rightarrow\infty}{\lim}~pBFDR_{nm}~=~\alpha.\label{eq:pbfdr_alpha1}
		cBFNR_{nm}&\stackrel{a.s.}{=}0;\label{eq:cbfnr_alpha1}\\
		pBFNR_{nm}&~=~0.\label{eq:pbfnr_alpha1}
	\end{align}
\end{theorem}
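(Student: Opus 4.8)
The plan is to recognize that this statement is, in essence, a corollary of the structure already exposed in the proof of Theorem \ref{theorem:fdr_alpha_control}. First I would unpack what the hypothesis ``$cFDR_{nm}$ or $pBFDR_{nm}$ is asymptotically controlled at $\alpha=K^{-1}$'' actually entails. The proof of Theorem \ref{theorem:fdr_alpha_control} shows a dichotomy: along sequences with $\widehat d_{\tilde k}=0$ eventually, both $FDR$ versions vanish, whereas the limiting value $K^{-1}$ is attained precisely along sequences $\beta_{nm}\to 0$ for which $\widehat d_{\tilde k}=1$ for all sufficiently large $m$ and $n$. Thus control at $\alpha=K^{-1}$ is logically equivalent to being in the regime where even the (asymptotically) true null $H_{0\tilde k}$ associated with the best model $\mathcal M_{\tilde k}$ is rejected.

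Next I would show that in this very regime the whole decision vector collapses to $\widehat\bd=\bone$. By Theorem \ref{theorem:conv_vknm} we have $v_{knm}\stackrel{a.s.}{\longrightarrow}1$ for every $k\neq\tilde k$, while the regime forces $\beta_{nm}\to 0$; hence by the decision rule (\ref{eq:optimal_decision}), $v_{knm}>\beta_{nm}$ and so $\widehat d_k=1$ for each $k\neq\tilde k$ once $m$ and $n$ are large. Combining this with $\widehat d_{\tilde k}=1$ yields $\widehat\bd=\bone$ almost surely for all sufficiently large $m$ and $n$.

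Finally I would substitute $\widehat\bd=\bone$ into the expression (\ref{eq:pBFNR}) for the conditional false non-discovery rate. Under the non-randomized rule, $\delta(\cdot|\bX_n,\bY_{nm})$ concentrates all its mass at $\widehat\bd=\bone$, so that
$$cFNR_{nm}=\frac{\sum_{k=1}^K(1-\widehat d_k)v_{knm}}{\sum_{k=1}^K(1-\widehat d_k)\vee 1}=\frac{0}{1}=0$$
almost surely, since each factor $1-\widehat d_k$ in the numerator vanishes and the denominator reduces to $0\vee 1=1$. Taking expectation over $\bY_{nm}$ given $\bX_n$ then gives $pBFNR_{nm}=0$ immediately for large $m$ and $n$.

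The main obstacle here is not technical, since the substantive work resides in Theorems \ref{theorem:conv_vknm} and \ref{theorem:fdr_alpha_control}. The one point demanding care is the conceptual identification of ``asymptotic $FDR$ control at $\alpha=K^{-1}$'' with the all-rejection configuration $\widehat\bd=\bone$; once that identification is pinned down, the vanishing of both false non-discovery rate versions is automatic, because the set of accepted hypotheses that defines the $FNR$ denominator is empty.
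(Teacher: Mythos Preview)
Your proposal is correct and follows essentially the same route as the paper: you identify, via the proof of Theorem \ref{theorem:fdr_alpha_control}, that asymptotic control at $\alpha=K^{-1}$ forces $\widehat\bd=\bone$ for all sufficiently large $m$ and $n$, and then read off $cFNR_{nm}=0$ (hence $pBFNR_{nm}=0$) directly from the definition. The paper's own proof is terser but relies on exactly the same two observations.
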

\begin{proof}
From the proof of Theorem \ref{theorem:fdr_alpha_control}, recall that for asymptotic control of $cFDR_{nm}$ or $pBFDR_{nm}$ at $\alpha=K^{-1}$, we must obtain 
decision $\hat\bd$ where $\hat d_k=1$, for $k=1,\ldots,K$, for large enough $m$ and $n$. Hence, (\ref{eq:cbfnr_alpha1}) and (\ref{eq:pbfnr_alpha1}) follow simply 
from the definitions of $cBFNR_{nm}$ and $pBFNR_{nm}$ with $\bd=\hat\bd$ for sufficiently large $m$ and $n$.
\end{proof}
\begin{remark}
	\label{remark:fnr_alpha_control}
	Theorem \ref{theorem:fnr_alpha_control} shows that $cBFNR_{nm}$ and $pBFNR_{nm}$ are exactly zero for large enough $m$ and $n$. Needless to mention,
	these are far stronger results than convergence to zero in the limit. In other words, essentially in keeping with the classical hypothesis testing
	paradigm, $\alpha$-control of the Type-I error actually minimizes the Type-II error for sufficiently large $m$ and $n$.
\end{remark}

\section{Modification of the multiple testing procedure for practical implementation}
\label{sec:modification_practical}
Note that the constants $a_k$ in (\ref{eq:H0}) and (\ref{eq:H1}), which depend upon the true parameter(s) $\theta_0$, are unknown, since $\theta_0$ is unknown.
The constants $a_k$ also depend upon $\tilde\theta_k$, the minimizer of the KL-divergence of model $\mathcal M_k$ from the true model. Since the true model
itself is generally unknown, $\tilde\theta_k$ is usually unknown. Estimation of these parameters need not be reliable unless assumptions regarding
the true model is accurate enough. 

In practice, the considered models $\mathcal M_k$; $k=1,\ldots,K$, are expected to be carefully chosen for final model selection
so that misspecifications, if any, are not expected to be severe. Hence, for finite samples, where the variability of $T^{(k)}(\bX_n)$, and hence
the desired credible intervals, are reasonably large,
$a_k$ is not expected to play significant role. In such cases, it makes sense to set $a_k=0$. Similarly, setting $\varepsilon=0$ also makes sense.

Also in practice, one might set $\tilde\Theta_k=\Theta_k$
since accurate specification of a small set containing $\tilde\theta_k$ is not possible without knowledge of $\tilde\theta_k$.
With these, for practical purposes we re-formulate (\ref{eq:H0}) and (\ref{eq:H1}) as follows: 
\begin{equation}
	H_{0k}:\zeta=k,
	T^{(k)}(\tilde \bX_n)-T^{(k)}(\bX_n)\in [\tilde\ell_{knm},\tilde u_{knm}]
	\label{eq:H0_1}
\end{equation}
versus
\begin{align}
	&H_{1k}:\left\{\zeta\neq k\right\}%\bigcup\left\{\zeta=k,\theta_k\in\tilde\Theta^c_k\right\}
	\bigcup\left\{\zeta=k,
	T^{(k)}(\tilde \bX_n)-T^{(k)}(\bX_n)\in [\tilde\ell_{knm},\tilde u_{knm}]^c\right\}.
	\label{eq:H1_1}
\end{align}
We shall consider the above hypotheses for our applications.

\section{First simulation study: selection among Poisson and geometric parametric and nonparametric inverse regression models}
\label{sec:simstudy_ms}
For our simulation experiments we consider the same data and models considered in \ctn{Chat20a} for their forward and inverse pseudo-Bayes factor illustration.
Specifically, we set $n=m=10$ and generate data from relevant Poisson distribution with the log-linear link function  and consider modeling the data with 
Poisson and geometric distributions with log, logit and probit links for linear regression as well as nonparametric regression modeled by 
Gaussian process having linear mean function
and squared exponential covariance. We also consider variable selection in these setups with respect to two different covariates. 

Here we demonstrate that the forward and inverse pseudo-Bayes factor results obtained by \ctn{Chat20a} for both the experiments involving 
model selection and variable selection can be significantly improved with our inverse multiple testing framework. 
Let us begin with the model selection framework. The true, data-generating distribution and the competing inverse regression models are of course detailed in \ctn{Chat20a}
but to make this article as self-contained as possible, we briefly describe these next.

%\subsection{Poisson versus geometric linear and nonparametric regresison models when the true model is Poisson linear regression}
%\label{subsec:poisson_vs_geometric}

\subsection{True and competing inverse regression models}
\label{subsec:true_competing}

\subsubsection{True distribution}
\label{subsubsec:true_distribution}
%Let us first consider the case 
%where $y_{ij}\sim Poisson(\lambda(x_i))$, where $\lambda(x)=H(\eta(x))$, as briefed in Section \ref{subsec:illustrations_prior} (ii).
%In particular, we let $H(\cdot)=\exp(\cdot)$ and $\eta(\cdot)$ be a Gaussian process with mean function $\mu(x)=\alpha+\beta x$ and covariance 
%$Cov\left(\eta(x_1),\eta(x_2)\right)=\sigma^2\exp\left\{-(x_1-x_2)^2\right\}$, where $\sigma$ is unknown. We assume that 
The true data-generating distribution for this experiment
is $y_{ij}\sim Poisson(\lambda(x_i))$, with $\lambda(x)=\exp(\alpha_0+\beta_0 x)$. We generate the data by simulating $\alpha_0\sim U(-1,1)$, $\beta_0\sim U(-1,1)$
and $x_i\sim U(-1,1)$; $i=1,\ldots,n$, and then finally simulating $y_{ij}\sim Poisson(\lambda(x_i))$; $j=1,\ldots,m$, $i=1,\ldots,n$.
We shall also consider the true model as one of the competing models when no misspecification is assumed. 

\subsubsection{Inverse Poisson linear regression model}
\label{subsubsec:inverse_poisson_linear}
In this setup we model the data as follows: $y_{ij}\sim Poisson(\lambda(x_i))$, with $\lambda(x)=\exp(\alpha+\beta x)$, and set the prior
$\pi\left(\alpha,\beta\right)=1$, for $-\infty<\alpha,\beta<\infty$. 
The prior for $\tilde x_i$ is given by $\pi(\tilde x_i|\alpha,\beta)\equiv U(a,b)$, where 
\begin{equation}
a=\min\left\{\beta^{-1}\left(\log\left(\bar y_i-\frac{c_1s_i}{\sqrt{m}}\right)-\alpha\right),
\beta^{-1}\left(\log\left(\bar y_i+\frac{c_2s_i}{\sqrt{m}}\right)-\alpha\right)\right\}
	\label{eq:a1}
\end{equation}
and 
\begin{equation}
b=\max\left\{\beta^{-1}\left(\log\left(\bar y_i-\frac{c_1s_i}{\sqrt{m}}\right)-\alpha\right),
\beta^{-1}\left(\log\left(\bar y_i+\frac{c_2s_i}{\sqrt{m}}\right)-\alpha\right)\right\}.
	\label{eq:b1}
\end{equation}
We set $c_1=1$ and $c_2=100$, for ensuring positive value of $\bar y_i-\frac{c_1s_i}{\sqrt{m}}$ (so that logarithm of this quantity is well-defined) 
and a reasonably large support of the prior for $\tilde x_i$. 

%We then compute 
%$$
%\pi(y_{i1}|\bY_{nm,-i},\bX_{n,-i},\mathcal M)=\int_{\mathcal X}\int_{\Theta} f(y_{i1}|\theta,\tilde x_i,\bY^{(i-1)}_1,\mathcal M)
%	d\pi(\tilde x_i,\theta|\bY_{nm,-i},\bX_{n,-i},\mathcal M)
%$$
%by Monte Carlo averaging of the relevant Poisson probability of $y_{i1}$ over realizations of $(\tilde x_i,\theta)=(\tilde x_i,\alpha,\beta)$
%generated from $\pi(\tilde x_i,\theta|\bY_{nm,-i},\bX_{n,-i},\mathcal M)$. Since it follows from (\ref{eq:inv1}) that
%$\pi(\tilde x_i,\theta|\bY_{nm,-i},\bX_{n,-i},\mathcal M)$ $=$ $\pi(\tilde x_i|\theta,\mathcal M)\pi(\theta|\bY_{nm,-i},\bX_{n,-i},\mathcal M)$, and 
%since realizations of $\theta$ from $\pi(\theta|\bY_{nm,-i},\bX_{n,-i},\mathcal M)$ are already available in the forward context, we simply generate
%$\tilde x_i$ given $\theta$ from the prior for $\tilde x_i$ to obtain realizations from $\pi(\tilde x_i,\theta|\bY_{nm,-i},\bX_{n,-i},\mathcal M)$.
%Note that for different $i$, only sub-samples of $\theta$ of size $1000$ from the original sample of size $20,000$ from the full posterior of $\theta$ are available,
%and each $\theta$ is repeated $100$ times. However, realizations of $\tilde x_i$ are all distinct in spite of repetitions of $\theta$-values.
%
%Once for each $i=1,\ldots,n$, the Monte Carlo estimates of $\pi(y_{i1}|\bY_{nm,-i},\bX_{n,-i},\mathcal M)$ are available, we finally obtain the estimate of
%$\frac{1}{n}\sum_{i=1}^n\log\pi(y_{i1}|\bY_{nm,-i},\bX_{n,-i},\mathcal M)$ using the individual Monte Carlo estimates.

\subsubsection{Inverse Poisson nonparametric regression model}
\label{subsubsec:inverse_poisson_nonpara}

We now consider the case where $y_{ij}\sim Poisson(\lambda(x_i))$, where $\lambda(x)=\exp(\eta(x))$, where
$\eta(\cdot)$ is a Gaussian process with mean function $\mu(x)=\alpha+\beta x$ and covariance 
$Cov\left(\eta(x_1),\eta(x_2)\right)=\sigma^2\exp\left\{-(x_1-x_2)^2\right\}$, where $\sigma$ is unknown. 
%We assume that the true data-generating distribution
%is $y_{ij}\sim Poisson(\lambda(x_i))$, with $\lambda(x)=\exp(\alpha_0+\beta_0(x))$. We generate the data by simulating $\alpha_0\sim Uniform(-1,1)$, $\beta_0\sim Uniform(-1,1)$
%and $x_i\sim Uniform(-1,1)$; $i=1,\ldots,n$, and then finally simulating $y_{ij}\sim Poisson(\lambda(x_i))$; $j=1,\ldots,m$, $i=1,\ldots,n$.
%
We reparameterize $\sigma^2$ as $\exp(\omega)$, where $-\infty<\omega<\infty$. 
For the prior on the parameters, we set $\pi\left(\alpha,\beta,\omega\right)=1$, for $-\infty<\alpha,\beta,\omega<\infty$. 
%
%In the inverse case, for the reason of prior specification, we linearize $\eta(\tilde x_i)$ as $\alpha+\beta\tilde x_i$; see Section \ref{subsubsec:inverse_poisson_nonpara}.
%Hence, for comparability with the inverse counterpart, we set $\eta(x_i)=\alpha+\beta x_i$. Thus, in the forward case, 
%$\theta=(\alpha,\beta,\eta(x_1),\ldots,\eta(x_{i-1}),\eta(x_{i+1}),\ldots,\eta(x_n),\omega)$. 
%We obtain $\frac{1}{n}\sum_{i=1}^n\log\pi(y_{i1}|\bY_{nm,-i},\bX_n,\mathcal M)$ 
%using the same method of Monte Carlo averaging described in Section \ref{subsubsec:forward_poisson_linear}, where $\theta$ is again first generated using TMCMC
%from the full posterior of $\theta$ by discarding the first $10,000$ iterations and retaining the next $20,000$ for inference, which are re-used to approximate
%the desired posteriors $\pi(\theta|\bY_{nm,-i},\bX_{n,-i},\mathcal M)$. As before, we obtain Monte Carlo averages over $100,000$ realizations of $\theta$.
%
%
%\subsubsection{Inverse Poisson nonparametric regression model}
%\label{subsubsec:inverse_poisson_nonpara}
%The model in this case remains the same as that in Section \ref{subsubsec:forward_poisson_nonpara}, but now a prior on $\tilde x_i$ is needed.
Note that the prior for $\tilde x_i$, which is uniform on 
$B_{im}(\eta)=\left\{x:\eta(x)\in \log\left\{\left[\bar y_i-\frac{c_1s_i}{\sqrt{m}},\bar y_i+\frac{c_2s_i}{\sqrt{m}}\right]\right\}\right\}$, does not have a closed form,
since the form of $\eta(x)$ is unknown. However, if $m$ is large, the interval 
$\log\left\{\left[\bar y_i-\frac{c_1s_i}{\sqrt{m}},\bar y_i+\frac{c_2s_i}{\sqrt{m}}\right]\right\}$ is small, and $\eta(x)$ falling in this small interval
can be reasonably well-approximated by a straight line. Hence, we set $\eta(x)=\mu(x)=\alpha+\beta x$, for $\eta(x)$ falling in this interval.
Thus it follows that $\pi(\tilde x_i|\eta)\equiv U(a,b)$, where $a$ and $b$ are given by (\ref{eq:a1}) and (\ref{eq:b1}), respectively.
%Hence, we obtain the same prior for $\tilde x_i$ as in the case of linear Poisson regression described in Section \ref{subsubsec:inverse_poisson_linear}.
As before we set $c_1=1$ and $c_2=100$.

%The method for obtaining $\frac{1}{n}\sum_{i=1}^n\log\pi(y_{i1}|\bY_{nm,-i},\bX_{n,-i},\mathcal M)$ remains the same as discussed in Section 
%\ref{subsubsec:inverse_poisson_linear}.

\subsubsection{Inverse geometric logit and probit linear and Gaussian process regression models}
\label{subsubsec:geometric_models}
%lso report results of our simulation experiments where data generated from Poisson linear regression is modeled by geometric regression models of the form
We also model the data by geometric models of the form
\begin{equation}
	f(y_{ij}|\theta,x_i)=(1-p(x_i))^{y_{ij}}p(x_i),
	\label{eq:geo1}
\end{equation}
where $p(x_i)$ is modeled as logit or probit linear or nonparametric regression having the following forms: 
%. In other words, we consider the following possibilities of modeling $p(x)$:
\begin{align}
	&\log\left(\frac{p(x)}{1-p(x)}\right)=\alpha+\beta x;~\log\left(\frac{p(x)}{1-p(x)}\right)=\eta(x);\notag\\
	&p(x)=\Phi\left(\alpha+\beta x\right);~p(x)=\Phi\left(\eta(x)\right).\notag
\end{align}
In the above, $\Phi$ is the cumulative distribution function of the standard normal distribution and $\eta$ is modeled by a Gaussian process with
mean function $\mu(x)=\alpha+\beta x$ and covariance function given by $Cov(\eta(x_1),\eta(x_2))=\sigma^2\exp\left\{-(x_1-x_2)^2\right\}$.
As before, we set $\sigma^2=\exp(\omega)$, where $-\infty<\omega<\infty$, and consider the improper prior $\pi(\alpha,\beta,\omega)=1$ for $-\infty<\alpha,\beta,\omega<\infty$.

We assign prior on $\tilde x_i$ such that the mean of the geometric distribution, namely, $\frac{1-p(x)}{p(x)}$, lies in 
$\left[\bar y_i-\frac{c_1s_i}{\sqrt{m}},\bar y_i+\frac{c_2s_i}{\sqrt{m}}\right]$. The same principles as before shows that for the logit link, either
for linear or Gaussian process regression, the prior for $\tilde x_i$ is $U(a_1,b_1)$, where  
\begin{equation}
a_1=\min\left\{-\beta^{-1}\left(\log\left(\bar y_i-\frac{c_1s_i}{\sqrt{m}}\right)+\alpha\right),
-\beta^{-1}\left(\log\left(\bar y_i+\frac{c_2s_i}{\sqrt{m}}\right)+\alpha\right)\right\}
	\label{eq:a2}
\end{equation}
and 
\begin{equation}
b_1=\max\left\{-\beta^{-1}\left(\log\left(\bar y_i-\frac{c_1s_i}{\sqrt{m}}\right)+\alpha\right),
-\beta^{-1}\left(\log\left(\bar y_i+\frac{c_2s_i}{\sqrt{m}}\right)+\alpha\right)\right\}.
	\label{eq:b2}
\end{equation}
We set $c_1=1$ and $c_2=100$, as before.

In the case of geometric probit regression, let us first define $\ell_{im}=\bar y_i-\frac{c_1s_i}{\sqrt{m}}$ and $u_{im}=\bar y_i+\frac{c_2s_i}{\sqrt{m}}$. Then with
\begin{align}
	a_2&=\min\left\{\frac{\Phi^{-1}\left(\frac{1}{u_{im}+1}\right)-\alpha}{\beta},\frac{\Phi^{-1}\left(\frac{1}{\ell_{im}+1}\right)-\alpha}{\beta}\right\};\label{eq:a3}\\
	b_2&=\max\left\{\frac{\Phi^{-1}\left(\frac{1}{u_{im}+1}\right)-\alpha}{\beta},\frac{\Phi^{-1}\left(\frac{1}{\ell_{im}+1}\right)-\alpha}{\beta}\right\}.\label{eq:b3}
\end{align}
the prior for $\tilde x_i$, for both linear and Gaussian process based geometric probit regression, is $U(a_2,b_2)$.

%The rest of the methodology for computing FPBF and IPBF for geometric regression remains the same as for Poisson regression described in Section \ref{subsec:poisson_models}.

\subsection{Implementation of our multiple testing procedure for inverse model selection}
\label{subsec:implementation}

We now briefly discuss our strategy for implementing our multiple testing procedure for hypotheses (\ref{eq:H0_1}) and (\ref{eq:H1_1}). We set $\tilde\Theta_k$
to $\Theta_k$, so we shall denote $\pi(\tilde x_i|\bX_{n,-i},\bY_{nm},\mathcal M_k,\tilde\Theta_k)$ by $\pi(\tilde x_i|\bX_{n,-i},\bY_{nm},\mathcal M_k)$.

\subsubsection{Obtaining the posterior distributions of the discrepancy measures using IRMCMC and TMCMC}
\label{subsubsec:disc_posterior}
For each competing model $\mathcal M_k$; $k=1,\ldots,K$, we obtain samples from the cross-validation posterior distribution 
$\pi(\tilde x_i|\bX_{n,-i},\bY_{nm},\mathcal M_k)$,
for $i=1,\ldots,n$, using fast and efficient IRMCMC. %Importance Re-sampling MCMC (IRMCMC) proposed by \ctn{Bhatta07}.
The key idea is to first generate realizations of size $N$ from some appropriate ``importance sampling density" 
of the form $\pi(\tilde x_{i^*},\theta_k|\bX_{n,-i^*},\bY_{nm},\mathcal M_k)$, for some $i^*\in\{1,\ldots,n\}$  
using TMCMC. Note that a major advantage of TMCMC over regular MCMC is that it effectively reduces the dimensionality of the parameters 
to a single dimension, thus drastically improving the acceptance rate and computational
speed, while ensuring good mixing properties at the same time. %transformation based Markov chain Monte Carlo (TMCMC) (\ctn{Dutta13}). 
Appropriate choice of $i^*$, which is equivalent to appropriate choice of the importance sampling density, has been proposed in \ctn{Bhatta07}.
For $i\in\{1,\ldots,n\}$, 
a sub-sample of the realizations of $\theta_k$ (but not of $\tilde x_{i^*}$) of size $M~(<N)$ is selected without replacement with importance weights 
proportional to the ratio of 
$\pi(\tilde x_{i},\theta_k|\bX_{n,-i},\bY_{nm},\mathcal M_k)$ and $\pi(\tilde x_{i^*},\theta_k|\bX_{n,-i^*},\bY_{nm},\mathcal M_k)$.
For each member $\theta_k$ of the sub-sampled realizations, 
$R$ realizations of $\tilde x_i$ are generated using TMCMC from $\pi(\tilde x_{i}|\theta_k,\bX_{n,-i},\bY_{nm},\mathcal M_k)$, to yield a total of
$R\times M$ realizations from $\pi(\tilde x_i|\bX_{n,-i},\bY_{nm},\mathcal M_k)$.

In our examples, we generate $30,000$ TMCMC samples from $\pi(\tilde x_{i^*},\theta_k|\bX_{n,-i^*},\bY_{nm},\mathcal M_k)$
of which we discard the first $10,000$ as burn-in, and re-sample $1000$ $\theta_k$-realizations without replacement from the remaining $20,000$ realizations
with importance weights proportional to the ratio of $\pi(\tilde x_{i},\theta_k|\bX_{n,-i},\bY_{nm},\mathcal M_k)$ and 
$\pi(\tilde x_{i^*},\theta_k|\bX_{n,-i^*},\bY_{nm},\mathcal M_k)$. 
For each re-sampled $\theta_k$-value, we generate $100$ TMCMC realizations of $\tilde x_i$. We discard the first $10,000$ realizations of $\tilde x_i$ as burn-in for the first
re-sampled $\theta_k$-realization, and for the subsequent $\theta_k$-realizations, we set the final value of $\tilde x_i$ of the previous value of $\theta_k$ as the initial value
for $\tilde x_i$ given the current $\theta_k$-value, and continue TMCMC without any further burn-in. We thus obtain $1000\times 100=100,000$ realizations of $\tilde x_i$
for each $i=1,\ldots,n$. In all our examples, the above IRMCMC strategy, in conjunction with efficient implementation of additive TMCMC, has led to excellent mixing
properties.

Using the $100,000$ IRMCMC samples, we obtain the posterior distribution of any given discrepancy measure $T^{(k)}(\tilde\bX_n)$.

\subsubsection{Obtaining the posterior model probabilities using Gibbs sampling}
\label{subsubsec:posterior_model_probs}

To obtain the posterior distribution of $\zeta$, we first need to specify a prior for $(p_1,\ldots,p_K)$. We consider the Dirichlet prior with parameters
$(\alpha_1,\ldots,\alpha_K)$, where $\alpha_k>0$, for $k=1,\ldots,K$. Given $\zeta$, the posterior distribution of $(p_1,\ldots,p_K)$ is again a Dirichlet
distribution with parameters $(\alpha_1+I(\zeta=1),\ldots,\alpha_K+I(\zeta=K))$. In other words,
\begin{equation}
	\pi(p_1,\ldots,p_K|\bX_n,\bY_{nm},\zeta)\equiv Dirichlet(\alpha_1+I(\zeta=1),\ldots,\alpha_K+I(\zeta=K)).
	\label{eq:dir}
\end{equation}
Given $(p_1,\ldots,p_K)$, the posterior distribution of $\zeta$ is given by (\ref{eq:post_zeta}), which is a function of the Bayes factors 
$BF^{(nm)}(\mathcal M_k,\mathcal M_{\tilde k})$; $k=1,\ldots,K$. \ctn{Chat20a} have shown that the corresponding pseudo-Bayes factors 
$PBF^{(nm)}(\mathcal M_k,\mathcal M_{\tilde k})$; $k=1,\ldots,K$, have the same asymptotic properties as the Bayes factors and are computationally far more efficient.
Moreover, unlike Bayes factors, pseudo-Bayes factors do not suffer from Lindley's paradox. 
%(see \ctn{Jeffreys39}, \ctn{Lindley57}, \ctn{Bartlett57}, \ctn{Robert93}, \ctn{Villa15})
Thus, it seems reasonable to replace $BF^{(nm)}(\mathcal M_k,\mathcal M_{\tilde k})$ in (\ref{eq:post_zeta}) with the corresponding
$PBF^{(nm)}(\mathcal M_k,\mathcal M_{\tilde k})$. In other words, we approximate the posterior probability $\pi(\zeta=k|\bX_n,\bY_{nm},p_1,\ldots,p_K)$ as
\begin{equation}
	\pi(\zeta=k|\bX_n,\bY_{nm},p_1,\ldots,p_K)
	\approx \frac{p_kPBF^{(nm)}(\mathcal M_k,\mathcal M_{\tilde k})}{\sum_{\ell=1}^Kp_\ell PBF^{(nm)}(\mathcal M_{\ell},\mathcal M_{\tilde k})};~k=1,\ldots,K.
	\label{eq:post_zeta3}
\end{equation}
Since the model probabilities are associated with the forward part, that is, where all the covariate values are treated as fixed, we consider the forward,
or the traditional pseudo-Bayes factor in (\ref{eq:post_zeta3}).
In our examples, the values of $PBF^{(nm)}(\mathcal M_k,\mathcal M_{\tilde k})$; $k=1,\ldots,K$, are already available from \ctn{Chat20a}
who provide estimates of $\frac{1}{n}\sum_{i=1}^n\log\pi(y_{i1}|\bY_{nm,-i},\bX_n,\mathcal M_k)$ in the second last column of Table 9.1.
Note that $$\frac{1}{n}\log PBF^{(nm)}(\mathcal M_k,\mathcal M_{\tilde k})=\frac{1}{n}\sum_{i=1}^n\log\pi(y_{i1}|\bY_{nm,-i},\bX_n,\mathcal M_k)
-\frac{1}{n}\sum_{i=1}^n\log\pi(y_{i1}|\bY_{nm,-i},\bX_n,\mathcal M_{\tilde k}).$$
Here %$\mathcal M_{\tilde k}$ corresponds to the maximum value of 
$\tilde k=\underset{k=1,\ldots,K}{\arg\max}~\frac{1}{n}\sum_{i=1}^n\log\pi(y_{i1}|\bY_{nm,-i},\bX_n,\mathcal M_k)$.

Using the full conditional distributions (\ref{eq:dir}) and (\ref{eq:post_zeta3}), we obtain $100,000$ realizations from the posterior
distribution of $(\zeta,p_1,\ldots,p_K)$ using Gibbs sampling, after discarding the first $10,000$ iterations as burn-in.

\subsubsection{Obtaining the posterior probabilities of the alternative hypotheses $H_{1k}$}
\label{subsubsec:posterior_alternatives}
Note that for $k=1,\ldots,K$, the posterior probability of $H_{1k}$ is given by
\begin{align}
	%v_{knm}=1-\pi\left(\zeta=k,T^{(k)}(\tilde \bX_n)-T^{(k)}(\bX_n)\in [\tilde\ell_{nk}-a_k-\varepsilon,\tilde u_{nk}-a_k+\varepsilon] |\bX_n,\bY_{nm}\right)
	v_{knm}&=1-\pi\left(\zeta=k,T^{(k)}(\tilde \bX_n)-T^{(k)}(\bX_n)\in [\tilde\ell_{knm},\tilde u_{knm}] \big |\bX_n,\bY_{nm}\right)\notag\\
	&=1-\pi\left(\zeta=k\big |\bX_n,\bY_{nm}\right)\pi\left(T^{(k)}(\tilde \bX_n)-T^{(k)}(\bX_n)\in [\tilde\ell_{knm},\tilde u_{knm}] \big |\zeta=k,\bX_n,\bY_{nm}\right).
	\label{eq:mc_average}
\end{align}
Once we obtain realizations from the posteriors of $T^{(k)}(\tilde\bX_n)$ for $k=1,\ldots,K$, and $(\zeta,p_1,\ldots,p_K)$, evaluation of 
the posterior probabilities of $H_{1k}$, denoted by $v_{knm}$; $k=1,\ldots,K$, follows simply by Monte Carlo averaging associated with the two factors
of (\ref{eq:mc_average}).

\subsection{Results of the simulation experiment for model selection}
\label{subsec:results_model}

\subsubsection{Non-misspecified situation}
\label{subsubsec:nonmiss_ms}
Section \ref{subsec:true_competing} shows that for this experiment, $K=6$, when no misspecification is considered. 
We set $\alpha_k=1$; $k=1,\ldots,K$, for the parameters of the Dirichlet prior for $(p_1,\ldots,p_K)$. That is, we assume a uniform prior distribution for 
$(p_1,\ldots,p_K)$ on the simplex. We report our results with respect to this prior, but our experiments with other values of $(\alpha_1,\ldots,\alpha_K)$
did not yield different results.

For $n=m=10$, the $cFDR_{nm}$ and $cFNR_{nm}$, for $\beta_{nm}\in [0.01,0.99]$ are provided in Figure \ref{fig:error_rates1}.
The red and green colours correspond to $T^{(k)}_1(\tilde\bX_n)-T^{(k)}_1(\bX_n)$ and $T^{(k)}_2(\tilde\bX_n)-T^{(k)}_2(\bX_n)$, respectively.
In the plots we denote these red and green coloured cFDRs as cFDR1 and cFDR2, respectively. Similarly, cFNR1 and cFNR2 denote the red and green coloured cFNRs. 
When $T^{(k)}_1(\tilde\bX_n)-T^{(k)}_1(\bX_n)$ is considered, $cFDR_{nm}=0.024$ for $\beta_{nm}<0.86$ and equals $9.023\times 10^{-6}$ for $\beta_{nm}\geq 0.86$.
On the other hand, for $T^{(k)}_2(\tilde\bX_n)-T^{(k)}_2(\bX_n)$, $cFDR_{nm}=0.087$ for $0.01\leq\beta_{nm}<0.48$ and falls to $5.444\times 10^{-5}$ for 
$0.48\leq\beta_{nm}\leq 0.99$. 
In the first case, the multiple testing procedure selects $H_{1k}$ for $k=1,\ldots,K$ when $0.01\leq\beta_{nm}<0.86$. When $0.86<\beta_{nm}\leq 0.99$, the method
selects $H_{0\tilde k}$ and $H_{1k}$ for $k\neq\tilde k$. Here $\tilde k$ corresponds to the true data-generating model, namely, the Poisson log-linear regression model.
In the second case, all the alternative hypotheses are selected when $0.01\leq \beta_{nm}<0.48$; the true null and remaining alternative hypotheses are chosen for 
$0.48\leq\beta_{nm}\leq 0.99$.
Thus, for both the discrepancy measures, the correct model is selected for appropriate values of $\beta_{nm}$. However, cFDR2 falls close to zero much faster than cFDR1,
and from the point onwards where the true decision occurs, cFNR2 is much lesser than cFNR1. These demonstrate that $T^{(k)}_2(\tilde\bX_n)-T^{(k)}_2(\bX_n)$ is a more efficient
choice compared to $T^{(k)}_1(\tilde\bX_n)-T^{(k)}_1(\bX_n)$.

Here is an important point regarding comparison with our multiple testing result with that of inverse pseudo-Bayes factor reported in the last column of Table 9.1
of \ctn{Chat20a}. The column shows that the inverse pseudo-Bayes factor identifies the true Poisson log-linear regression model as only the second best.
However our multiple testing procedure correctly identifies the true model as the best one, for appropriate values of $\beta_{nm}$.

It is also important to remark in this context that the posterior probabilities of $T^{(k)}(\tilde\bX_n)-T^{(k)}(\bX_n)\in[\tilde\ell_{knm},\tilde u_{knm}]$
when $k$ is the true model, is significantly smaller than several other models. That the true model still turns out to be the best is due to its
much larger posterior model probability compared to the others. The point is that even the true data-generating model need not have large posterior probabilities associated with
the inverse discrepancy measure, and if the corresponding posterior model probability is not significantly large, then any other model can turn out to be the best
on the basis of its stronger inverse perspective.
\begin{figure}
	\centering
	\subfigure [$cFDR_{nm}$ as a function of $\beta_{nm}$]{ \label{fig:cfdr1}
	\includegraphics[width=7.5cm,height=7.5cm]{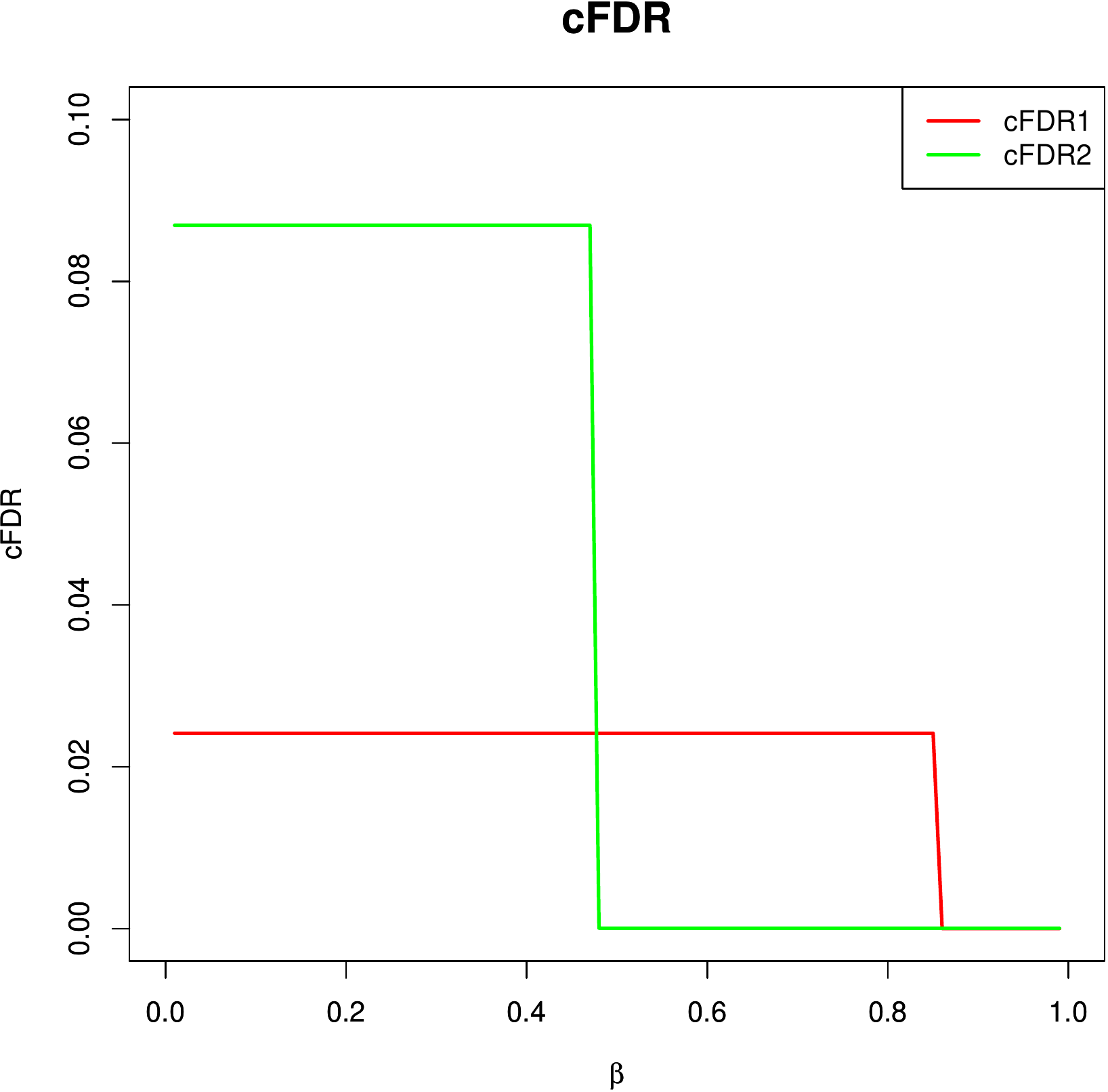}}
	\hspace{2mm}
	\subfigure [$cFNR_{nm}$ as a function of $\beta_{nm}$]{ \label{fig:cfnr1}
	\includegraphics[width=7.5cm,height=7.5cm]{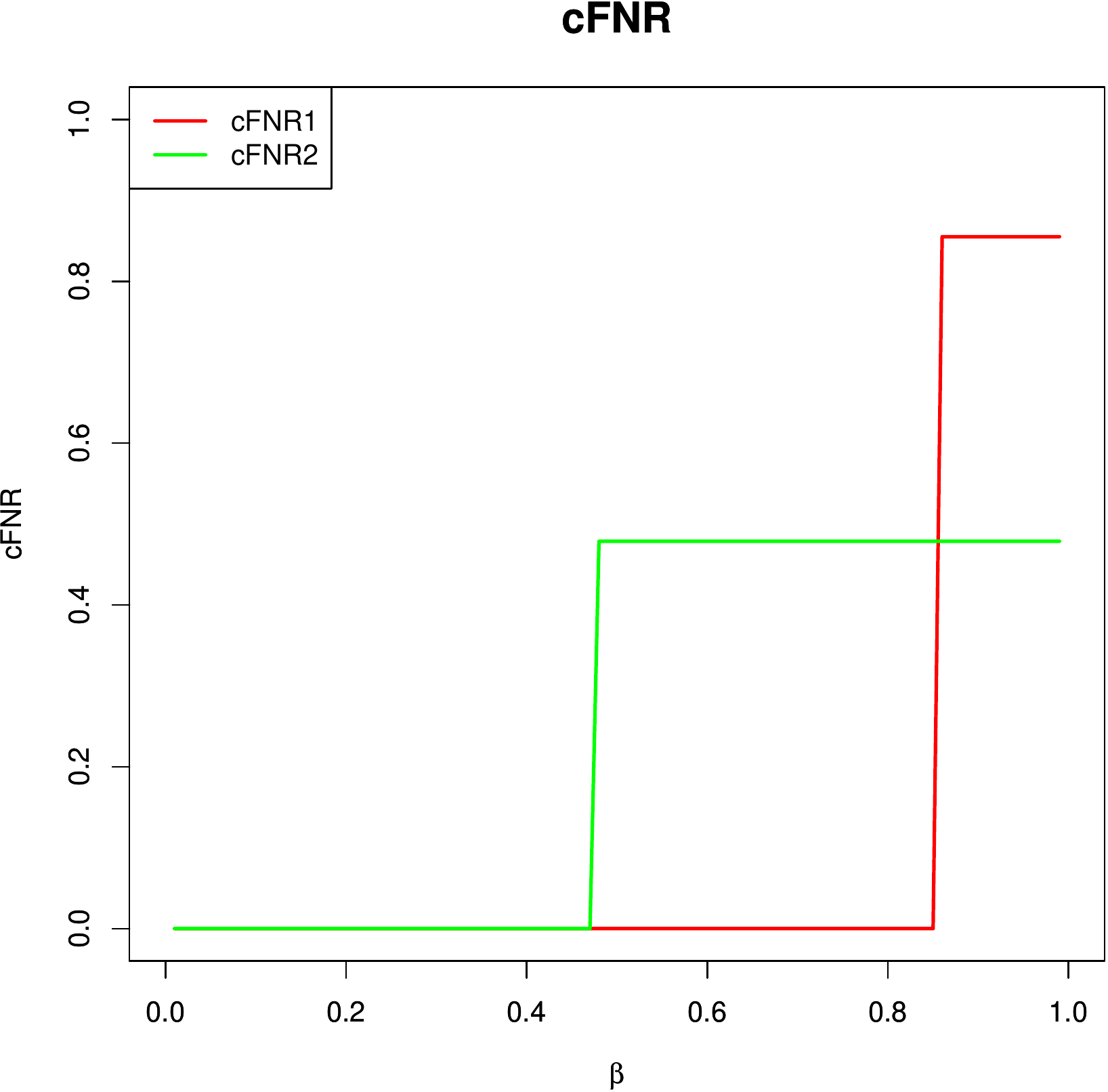}}
	\caption{$cFDR_{nm}$ and $cFNR_{nm}$ as functions of $\beta_{nm}$ in the non-misspecified case.}
	\label{fig:error_rates1}
\end{figure}

\subsubsection{Misspecified situation}
\label{subsubsec:miss_ms}
Let us now consider the case of misspecification, that is, when the true Poisson log-linear model is left out from consideration among the competing models. Thus, $K=5$
in this case. The remaining setup is the same as in the non-misspecified scenario. Figure \ref{fig:error_rates2} display the cFDRs and cFNRs for this situation, each associated
with both $T^{(k)}_1(\tilde\bX_n)-T^{(k)}_1(\bX_n)$ and $T^{(k)}_2(\tilde\bX_n)-T^{(k)}_2(\bX_n)$. In this case, for both the discrepancy measures, the correct decision,
namely, the null hypothesis for the Poisson log-Gaussian process and the alternative hypotheses for the remaining models,
is reached for relatively large values of $\beta_{nm}$. 
Indeed, cFDR1 $= 0.002$ for $0.01\leq \beta_{nm}<0.99$ and $0.0003$ for $\beta_{nm}=0.99$ and cFDR2 $= 0.020$ for $0.01\leq \beta_{nm}<0.91$ 
and $0.0003$ for $0.91\leq \beta_{nm}\leq 0.99$. Again, $T^{(k)}_2(\tilde\bX_n)-T^{(k)}_2(\bX_n)$ performs better than $T^{(k)}_1(\tilde\bX_n)-T^{(k)}_1(\bX_n)$
in terms of faster decrease of $cFDR_{mn}$ towards zero and lesser value of $cFNR_{nm}$ once the right decision has been obtained.

Here the multiple testing procedure turns out to be consistent with both forward and inverse pseudo-Bayes factor, since the last two columns of Table 9.1 of \ctn{Chat20a} show 
that if the Poisson log-linear model is not considered among the competing models, then the Poisson log-Gaussian process model is the best.
Here the corresponding posterior probability of $T^{(k)}(\tilde\bX_n)-T^{(k)}(\bX_n)\in[\tilde\ell_{knm},\tilde u_{knm}]$ is higher than those of the other models,
in addition to higher posterior model probability.
\begin{figure}
	\centering
	\subfigure [$cFDR_{nm}$ as a function of $\beta_{nm}$]{ \label{fig:cfdr2}
	\includegraphics[width=7.5cm,height=7.5cm]{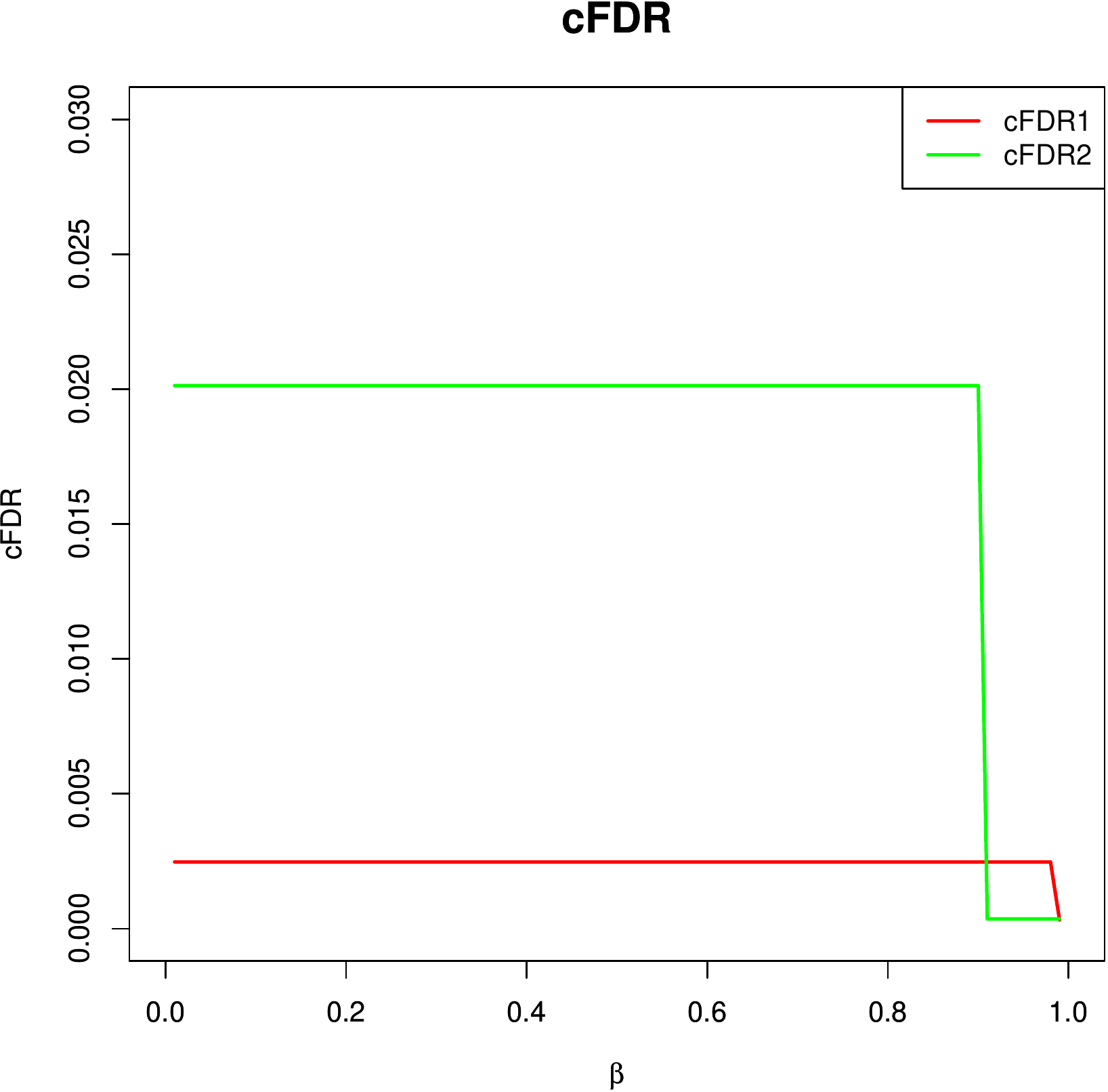}}
	\hspace{2mm}
	\subfigure [$cFNR_{nm}$ as a function of $\beta_{nm}$]{ \label{fig:cfnr2}
	\includegraphics[width=7.5cm,height=7.5cm]{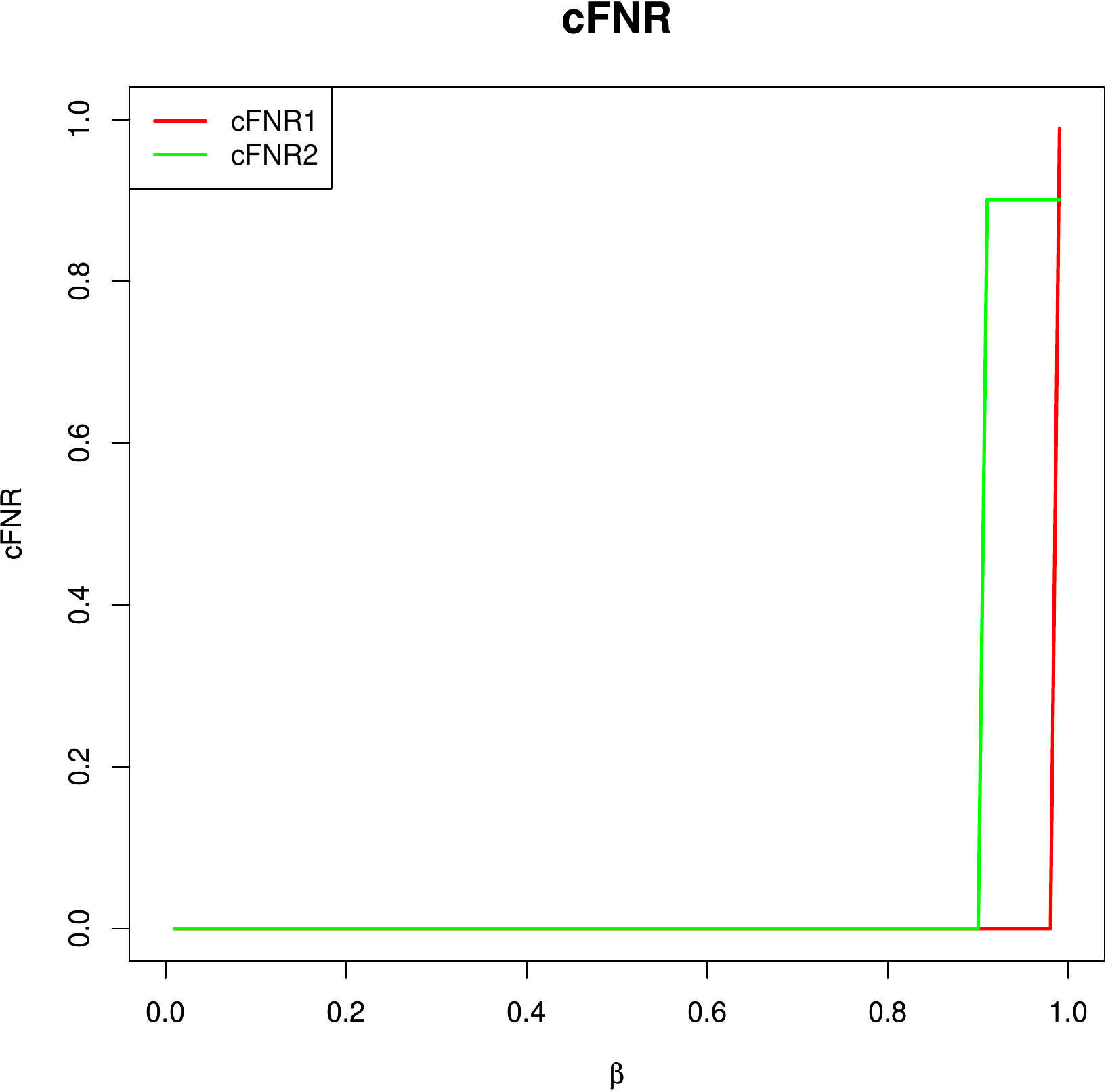}}
	\caption{$cFDR_{nm}$ and $cFNR_{nm}$ as functions of $\beta_{nm}$ in the misspecified case.}
	\label{fig:error_rates2}
\end{figure}

\section{Second simulation study: variable selection in Poisson and  geometric linear and nonparametric regression models when true model is Poisson linear regression}
\label{sec:simstudy_vs}

Again, for the purpose of making this article as self-contained as possible, we begin with brief descriptions of the true and competing inverse regression models
in the variable selection context.

We now consider covariates $x$ and $z$, where the
true data-generating distribution
is $y_{ij}\sim Poisson(\lambda(x_i,z_i))$, with $\lambda(x,z)=\exp(\alpha_0+\beta_0 x+\gamma_0 z)$. 
The data is generated as follows. We simulate $\alpha_0,\beta_0,\gamma_0\sim U(-1,1)$ independently
and $x_i\sim U(-1,1)$, $z_i\sim U(0,2)$; $i=1,\ldots,n$, independently. Finally, we generate $y_{ij}\sim Poisson(\lambda(x_i,z_i))$; $j=1,\ldots,m$, $i=1,\ldots,n$,
independently.

As in \ctn{Chat20a} we model the data $y_{ij}$; $i=1,\ldots,n$; $j=1,\ldots,m$ with both Poisson and geometric models letting the regression part consist of
either $x$ or $z$, or both. We denote the linear regression coefficients of the intercept, $x$ and $z$ as $\alpha$, $\beta$ and $\gamma$, respectively,
and give the improper prior density $1$ to $(\alpha,\beta)$, $(\alpha,\gamma)$ and $(\alpha,\beta,\gamma)$ when the models consist of these combinations of parameters. 
For Gaussian process regression with both $x$ and $z$, we let $\eta(x,z)$ be the regression function modeled by a Gaussian process with mean 
$\mu(x,z)=\alpha+\beta x+\gamma z$ and covariance function $Cov\left(\eta(x_1,z_1),\eta(x_2,z_2)\right)=\exp\left(\omega\right)
\exp\left[-\left\{(x_1-x_2)^2+(z_1-z_2)^2\right\}\right]$, and we assign prior mass $1$ to 
$(\alpha,\beta,\omega)$, $(\alpha,\gamma,\omega)$ and $(\alpha,\beta,\gamma,\omega)$ when the models consist of the covariates $x$, $z$ or both.
Where the model consists of the single covariate $x$ or $z$, the priors for $\tilde x_i$ and $\tilde z_i$ remain the same as
in the previous cases. 

But wherever the models consist of both the covariates $x$ and $z$, we need to assign priors for both $\tilde x_i$ and $\tilde z_i$,
%in addition to requiring that $E(y_{ij}|\theta,x_i,z_i)$ under the postulated model fall in $\left[\bar y_i-\frac{c_1s_i}{\sqrt{m}},\bar y_i+\frac{c_2s_i}{\sqrt{m}}\right]$.
%The 
and the same priors for $\tilde x_i$ and $\tilde z_i$ as the previous situations where the models consisted of single covariates, will not be consistent here.
%For consistent priors we adopt the following strategy.
Letting $\alpha$ be the intercept, $\beta$ and $\gamma$ the coefficients of $x_i$ and $z_i$ respectively in the regression forms, we consider the same  
consistent priors for $\tilde x_i$ and $\tilde z_i$ as proposed in \ctn{Chat20a}.
In Sections \ref{subsubsec:x_z_prior_poisson}, \ref{subsubsec:x_z_prior_geo_logit} and we provide the forms of the priors for $\tilde x_i$ and $\tilde z_i$
when the models consist of both the covariates $x$ and $z$.

\subsubsection{Prior for $\tilde x_i$ and $\tilde z_i$ for Poisson regression}
\label{subsubsec:x_z_prior_poisson}
For the Poisson linear or Gaussian process regression model with log link consisting of both the covariates $x$ and $z$, we set 
$\tilde x_i\sim U\left(a^{(1)}_x,b^{(1)}_x\right)$ and  $\tilde z_i\sim U\left(a^{(1)}_z,b^{(1)}_z\right)$, where
\begin{equation*}
	a^{(1)}_x=\min\left\{\beta^{-1}\left(\log\left(\bar y_i-\frac{c_1s_i}{\sqrt{m}}\right)-\alpha-\gamma z_i\right),
\beta^{-1}\left(\log\left(\bar y_i+\frac{c_2s_i}{\sqrt{m}}\right)-\alpha-\gamma z_i\right)\right\},
	%\label{eq:ax1}
\end{equation*}
 
\begin{equation*}
	b^{(1)}_x=\max\left\{\beta^{-1}\left(\log\left(\bar y_i-\frac{c_1s_i}{\sqrt{m}}\right)-\alpha-\gamma z_i\right),
\beta^{-1}\left(\log\left(\bar y_i+\frac{c_2s_i}{\sqrt{m}}\right)-\alpha-\gamma z_i\right)\right\},
	%\label{eq:bx1}
\end{equation*}

\begin{equation*}
	a^{(1)}_z=\min\left\{\gamma^{-1}\left(\log\left(\bar y_i-\frac{c_1s_i}{\sqrt{m}}\right)-\alpha-\beta x_i\right),
\gamma^{-1}\left(\log\left(\bar y_i+\frac{c_2s_i}{\sqrt{m}}\right)-\alpha-\beta x_i\right)\right\}
	%\label{eq:az1}
\end{equation*}
and 
\begin{equation*}
	b^{(1)}_z=\max\left\{\gamma^{-1}\left(\log\left(\bar y_i-\frac{c_1s_i}{\sqrt{m}}\right)-\alpha-\beta x_i\right),
\gamma^{-1}\left(\log\left(\bar y_i+\frac{c_2s_i}{\sqrt{m}}\right)-\alpha-\beta x_i\right)\right\}.
	%\label{eq:bz1}
\end{equation*}
%Note that the priors for $\tilde x_i$ and $\tilde z_i$ depend upon $z_i$ and $x_i$ respectively.
%This is somewhat in keeping with (\ref{eq:B2}) where the prior for $\tilde x_i$ depends upon $x_i$ itself. 
%The discussion following (\ref{eq:B2}) is enough to justify that the priors for $\tilde x_i$ and $\tilde z_i$ in the current situation do make sense, apart from
%ensuring consistency. 
%It is unusual in Bayesian inference to make the prior depend upon the truth. 
%Indeed, the true parameter is always unknown; had it been known, then one would give full prior probability to the true
%parameter. In our case $x_i$ is actually known but a prior is needed for $\tilde x_i$ for the sake of cross-validation. Moreover, the prior does not consider 
%$x_i$ to be known as long as the sample sizes $n$ and $m$ remain finite and $\theta$ is unknown or takes false values. The prior has substantial variance
%in these cases. Hence, although unusual, such a prior on $\tilde x_i$ is not untenable for inverse cross-validation.

\subsubsection{Prior for $\tilde x_i$ and $\tilde z_i$ for geometric regression with logit link}
\label{subsubsec:x_z_prior_geo_logit}
For the geometric linear or Gaussian process regression model with logit link consisting of both the covariates $x$ and $z$, we set 
$\tilde x_i\sim U\left(a^{(2)}_x,b^{(2}_x\right)$ and  $\tilde z_i\sim U\left(a^{(2)}_z,b^{(2)}_z\right)$, where
\begin{equation*}
	a^{(2)}_x=\min\left\{-\beta^{-1}\left(\log\left(\bar y_i-\frac{c_1s_i}{\sqrt{m}}\right)+\alpha+\gamma z_i\right),
-\beta^{-1}\left(\log\left(\bar y_i+\frac{c_2s_i}{\sqrt{m}}\right)+\alpha+\gamma z_i\right)\right\},
	%\label{eq:ax2}
\end{equation*}

\begin{equation*}
	b^{(2)}_x=\max\left\{-\beta^{-1}\left(\log\left(\bar y_i-\frac{c_1s_i}{\sqrt{m}}\right)+\alpha+\gamma z_i\right),
-\beta^{-1}\left(\log\left(\bar y_i+\frac{c_2s_i}{\sqrt{m}}\right)+\alpha+\gamma z_i\right)\right\},
	%\label{eq:bx2}
\end{equation*}

\begin{equation*}
	a^{(2)}_z=\min\left\{-\gamma^{-1}\left(\log\left(\bar y_i-\frac{c_1s_i}{\sqrt{m}}\right)+\alpha+\beta x_i\right),
-\gamma^{-1}\left(\log\left(\bar y_i+\frac{c_2s_i}{\sqrt{m}}\right)+\alpha+\beta x_i\right)\right\}
	%\label{eq:az2}
\end{equation*}
and 
\begin{equation*}
	b^{(2)}_z=\max\left\{-\gamma^{-1}\left(\log\left(\bar y_i-\frac{c_1s_i}{\sqrt{m}}\right)+\alpha+\beta x_i\right),
-\gamma^{-1}\left(\log\left(\bar y_i+\frac{c_2s_i}{\sqrt{m}}\right)+\alpha+\beta x_i\right)\right\}.
	%\label{eq:bz2}
\end{equation*}

\subsubsection{Prior for $\tilde x_i$ and $\tilde z_i$ for geometric regression with probit link}
\label{subsubsec:x_z_prior_geo_probit}
For the geometric linear or Gaussian process regression model with probit link consisting of both the covariates $x$ and $z$, we set 
$\tilde x_i\sim U\left(a^{(3)}_x,b^{(3}_x\right)$ and  $\tilde z_i\sim U\left(a^{(3)}_z,b^{(3)}_z\right)$, where
%For geometric probit regression, first let $\ell_{im}=\bar y_i-\frac{c_1s_i}{\sqrt{m}}$ and $u_{im}=\bar y_i+\frac{c_2s_i}{\sqrt{m}}$. Let
\begin{equation*}
a^{(3)}_x=\min\left\{\frac{\Phi^{-1}\left(\frac{1}{u_{im}+1}\right)-\alpha-\gamma z_i}{\beta},
	\frac{\Phi^{-1}\left(\frac{1}{\ell_{im}+1}\right)-\alpha-\gamma z_i}{\beta}\right\},
	%\label{eq:ax3}
\end{equation*}

\begin{equation*}
b^{(3)}_x=\max\left\{\frac{\Phi^{-1}\left(\frac{1}{u_{im}+1}\right)-\alpha-\gamma z_i}{\beta},
	\frac{\Phi^{-1}\left(\frac{1}{\ell_{im}+1}\right)-\alpha-\gamma z_i}{\beta}\right\},
	%\label{eq:bx3}
\end{equation*}

\begin{equation*}
a^{(3)}_z=\min\left\{\frac{\Phi^{-1}\left(\frac{1}{u_{im}+1}\right)-\alpha-\beta x_i}{\gamma},
	\frac{\Phi^{-1}\left(\frac{1}{\ell_{im}+1}\right)-\alpha-\beta x_i}{\gamma}\right\}
	%\label{eq:az3}
\end{equation*}
and
\begin{equation*}
b^{(3)}_z=\max\left\{\frac{\Phi^{-1}\left(\frac{1}{u_{im}+1}\right)-\alpha-\beta x_i}{\gamma},
	\frac{\Phi^{-1}\left(\frac{1}{\ell_{im}+1}\right)-\alpha-\beta x_i}{\gamma}\right\}.
	%\label{eq:bz3}
\end{equation*}

\subsection{Discrepancy measure and Dirichlet prior parameters for more than one covariate}
\label{subsec:two_cov}

In models where both the covariates are considered, for any two $n$-dimensional vectors $\bv_{1n}=(v_{11},\ldots,v_{1n})$ and $\bv_n=(v_{21},\ldots,v_{2n})$, 
letting $\bv_i=(v_{1i},v_{2i})^T$, $\bV_n=(\bv_1,\ldots,\bv_n)$ 
and denoting the posterior mean vector and covariance matrix of $\tilde\bu_i=(\tilde x_i,\tilde z_i)^T$ 
by $E_{k}(\tilde\bu_i)$ and $Var_{k}(\tilde\bu_i)$
respectively, for $i=1,\ldots,n$,
we set 
\begin{equation}
T^{(k)}_3(\bV_n)=\frac{1}{n}\sum_{i=1}^n(\tilde\bv_i-E_{k}(\tilde\bu_i))^T\left(Var_{k}(\tilde\bu_i)+c\mathbb I\right)^{-1}(\tilde\bv_i-E_{k}(\tilde\bu_i)),
	\label{eq:T3}
\end{equation}
where $c>0$ and $\mathbb I$ is the identity matrix. Here $E_k(\tilde\bu_i)$ and $Var_k(\tilde\bu_i)$ correspond to the cross-validation posterior 
$\pi(\tilde\bu_i |\bX_{n,-i},\bY_{nm},\mathcal M_k)$.

In our experiment, as before we shall compare the results corresponding to $T^{(k)}_1(\tilde\bW_n)-T^{(k)}_1(\bW_n)$ and 
$T^{(k)}_2(\tilde\bW_n)-T^{(k)}_2(\bW_n)$, where $\tilde\bW_n$ is either $\tilde\bX_n$ or $\tilde\bZ_n$ and 
$\bW_n$ is either $\bX_n$ or $\bZ_n$. But for any inverse model that consists of both
the covariates $x$ and $z$, we replace both $T^{(k)}_1(\tilde\bW_n)-T^{(k)}_1(\bW_n)$ and $T^{(k)}_2(\tilde\bW_n)-T^{(k)}_2(\bW_n)$ 
with $T^{(k)}_3(\tilde\bV_n)-T^{(k)}_3(\bV_n)$, where $\tilde\bv_i=(\tilde x_{i},\tilde z_{i})^T$, $\tilde\bV_n=(\tilde\bv_1,\ldots,\tilde\bv_n)$, 
$\bv_i=(x_{i},z_{i})^T$ and $\bV_n=(\bv_1,\ldots,\bv_n)$.

For models having both $x$ and $z$ as covariates, the corresponding discrepancy measures
$T^{(k)}_3(\tilde\bV_n)-T^{(k)}_3(\bV_n)$ are associated with joint cross-validation posterior distributions of $(\tilde x_i,\tilde z_i)$, and hence the 
corresponding posterior probabilities of the hypotheses are expected to be much smaller than posterior probabilities of the hypotheses of the models with single
covariates. We make amends for this by setting the parameters $\alpha_k$ of the Dirichlet prior for $(p_1,\ldots,p_K)$ for any model $\mathcal M_k$
with both covariates to be $5$ times that of the remaining parameters. So, in our case, we set $\alpha_k=5$ for those $k$ associated with both the covariates, and
set the remaining parameters to $1$.

Note that in this experiment, $K=18$, including the true inverse Poisson log-linear regression model with both the covariates $x$ and $z$.
The implementation details remain the same as described in Section \ref{subsec:implementation}.

\subsection{Results of our multiple testing experiment for model and variable selection}
\label{subsec:results_vs}

\subsubsection{Non-misspecified situation}
\label{subsubsec:nonmiss_vs}
For $n=m=10$, when the true model is Poisson with log-linear regression on both the covariates $x$ and $z$, Figure \ref{fig:error_rates3} shows $cFDR_{nm}$ and
$cFNR_{nm}$ as functions of $\beta_{nm}$. 
In this case cFDR1 decreases towards zero slightly faster than cFDR2.
The numerical values of step functions cFDR1 and cFDR2 are provided as follows:
\begin{equation}
	cFDR1=\begin{cases}
       0.025 & \mbox{if}~0.01\leq\beta_{nm}<0.67;\\
       0.007 & \mbox{if}~0.67\leq\beta_{nm}<0.91;\\
       0.001 & \mbox{if}~0.91\leq\beta_{nm}<0.99;\\
	6.214\times 10^{-7} & \mbox{if}~\beta_{nm}=0.99
	\end{cases}
	\label{eq:cfdr1_vs}
\end{equation}
and
\begin{equation}
	cFDR2=\begin{cases}
       0.032 & \mbox{if}~0.01\leq\beta_{nm}<0.67;\\
       0.014 & \mbox{if}~0.67\leq\beta_{nm}<0.80;\\
       0.002 & \mbox{if}~0.80\leq\beta_{nm}<0.98;\\
	5.767\times 10^{-6} & \mbox{if}~0.98\leq\beta_{nm}\leq 0.99.
	\end{cases}
	\label{eq:cfdr2_vs}
\end{equation}
Note that the first change point for both cFDR1 and cFDR2 occurs at $\beta_{mn}=0.67$, and at this point, we obtain the decision configuration that selects
the null hypothesis of the true, Poisson log-linear model with both covariates $x$ and $z$, and alternative hypotheses of all other models.
For $\beta_{mn}<0.67$, for all the models, the alternative hypotheses are selected. Thus, the first change point associated with both cFDR1 and cFDR2 yields
the correct decision configuration. The next change points $\beta_{nm}=0.91$ and $\beta_{nm}=0.80$ for cFDR1 and cFDR2 are associated with selecting the
null hypothesis for the model with the Poisson log-linear model with covariate $x$, in addition to the null hypothesis of the true, 
Poisson log-linear model with both covariates $x$ and $z$. The final change points $\beta_{nm}=0.99$ and $\beta_{nm}=0.98$ yield the decision configurations
that select the null hypothesis for the model with the Poisson log-linear model with covariate $z$, in addition to the previous null hypotheses.
Thus, cFDR1 and cFDR2 behave quite consistently in this example and there seems to be no obvious reason for preferring one discrepancy measure to the other.
Observe in Figure \ref{fig:error_rates3} that cFNR1 and cFNR2 are also quite consistently behaved.

Again the important observation is that our multiple testing procedure seems to easily identify the true inverse model, while neither forward nor
inverse pseudo-Bayes factor successfully identified the true inverse model, as shown in the last two columns of Table 9.2 of \ctn{Chat20a}.
The second and third best models, namely, the Poisson log-linear model with covariate $x$ and the Poisson log-linear model with covariate $z$, respectively,
are however, consistent with forward and inverse pseudo-Bayes factor results reported in \ctn{Chat20a}.

Again we find that the posterior probabilities of $T^{(k)}(\tilde\bX_n)-T^{(k)}(\bX_n)\in[\tilde\ell_{knm},\tilde u_{knm}]$
when $k$ is the true model, is significantly smaller than most of the other models, but its much higher posterior model probability compared to the others
succeeds in making it the winner. The above inverse posterior probabilities for the second and third best models are also not higher than the remaining ones.  
%The point is that even the true data-generating model need not have large posterior probabilities associated with
%the inverse discrepancy measure, and if the corresponding posterior model probability is not significantly large, then any other model can turn out to be the best
%on the basis of its stronger inverse perspective.

\begin{figure}
	\centering
	\subfigure [$cFDR_{nm}$ as a function of $\beta_{nm}$]{ \label{fig:cfdr3}
	\includegraphics[width=7.5cm,height=7.5cm]{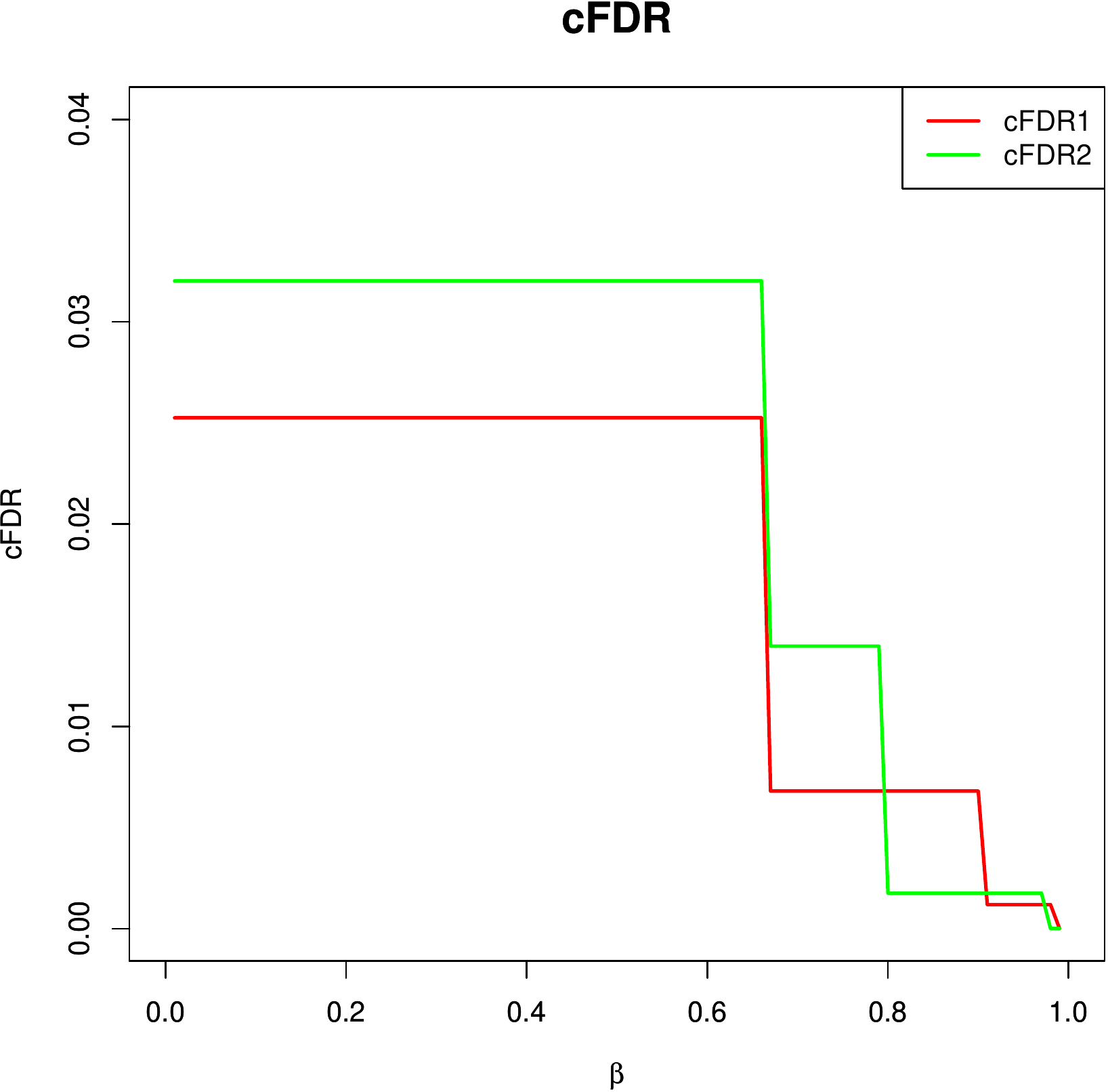}}
	\hspace{2mm}
	\subfigure [$cFNR_{nm}$ as a function of $\beta_{nm}$]{ \label{fig:cfnr3}
	\includegraphics[width=7.5cm,height=7.5cm]{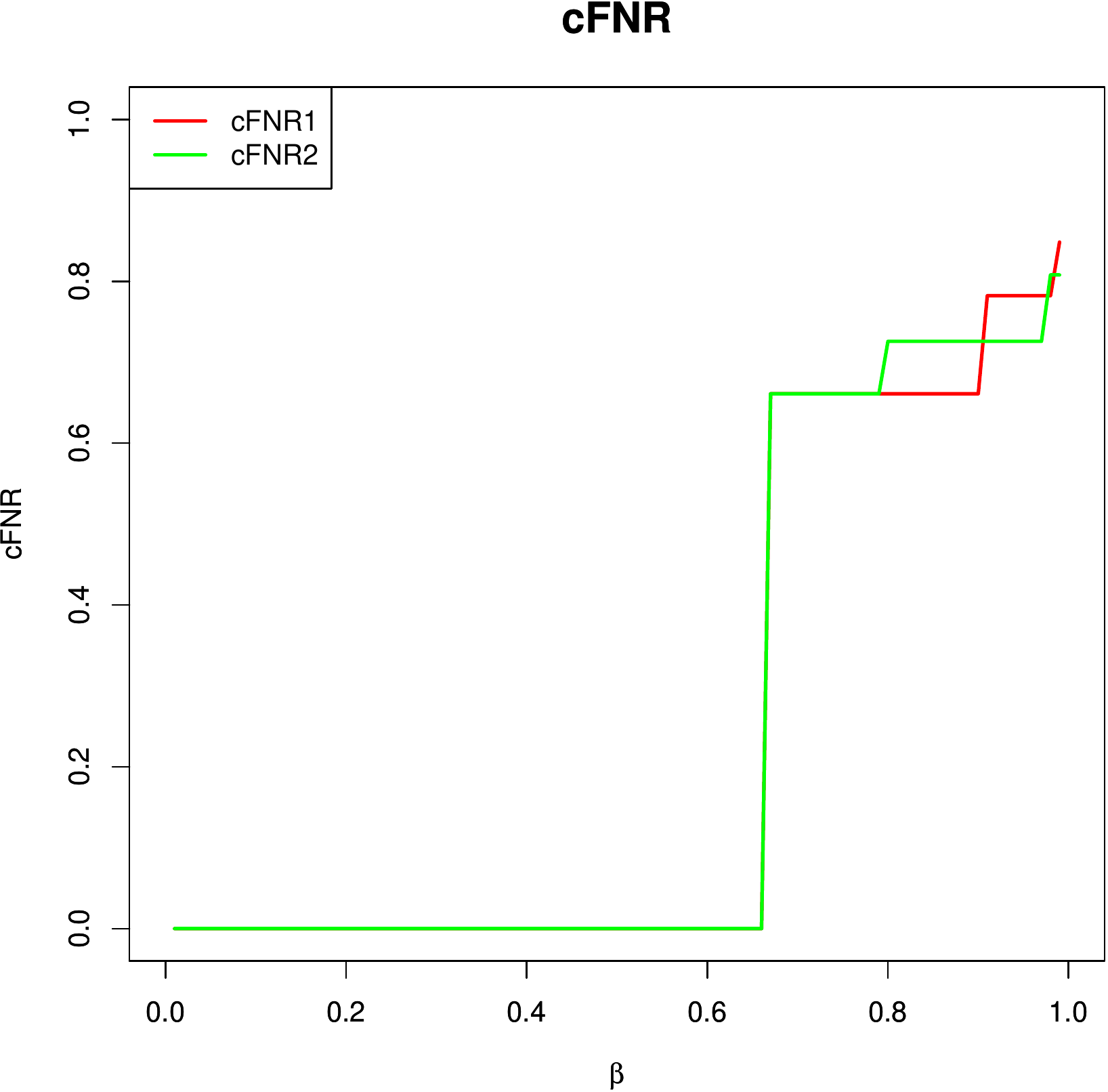}}
	\caption{$cFDR_{nm}$ and $cFNR_{nm}$ as functions of $\beta_{nm}$ in the non-misspecified situation of the model and variable selection problem.}
	\label{fig:error_rates3}
\end{figure}

\subsubsection{Misspecified situation}
\label{subsubsec:miss_vs}

In the misspecified situation we leave out the true Poisson log-linear model with both covariates $x$ and $z$ from among the competing models and implement 
our multiple testing procedure to obtain the best possible inverse models among the remaining ones. Figure \ref{fig:error_rates4} summarizes the results
of our implementation in this direction. Both cFDR1 and cFDR2 yield the Poisson log-linear model with covariate $x$ and the Poisson log-linear model with covariate $z$
as the best and the next best inverse models, corresponding to the two change points observed in the graphs of cFDR1 and cFDR2. 
Recall that these were the second and the third best models in the non-misspecified situation, showing that our results for this misspecified case is 
very much coherent.

Observe that the best model in this case
is detected by cFDR2 much earlier than cFDR1, and its value falls close to zero much earlier than that of cFDR1 in the process. The graphs for cFNR1 and cFNR2 shows
that at points where the best and the next best models are selected, cFNR2 is significantly smaller than cFNR1. Hence, in this misspecified situation, $T^{(k)}_2$
is again a better performer than $T^{(k)}_1$.

\begin{figure}
	\centering
	\subfigure [$cFDR_{nm}$ as a function of $\beta_{nm}$]{ \label{fig:cfdr4}
	\includegraphics[width=7.5cm,height=7.5cm]{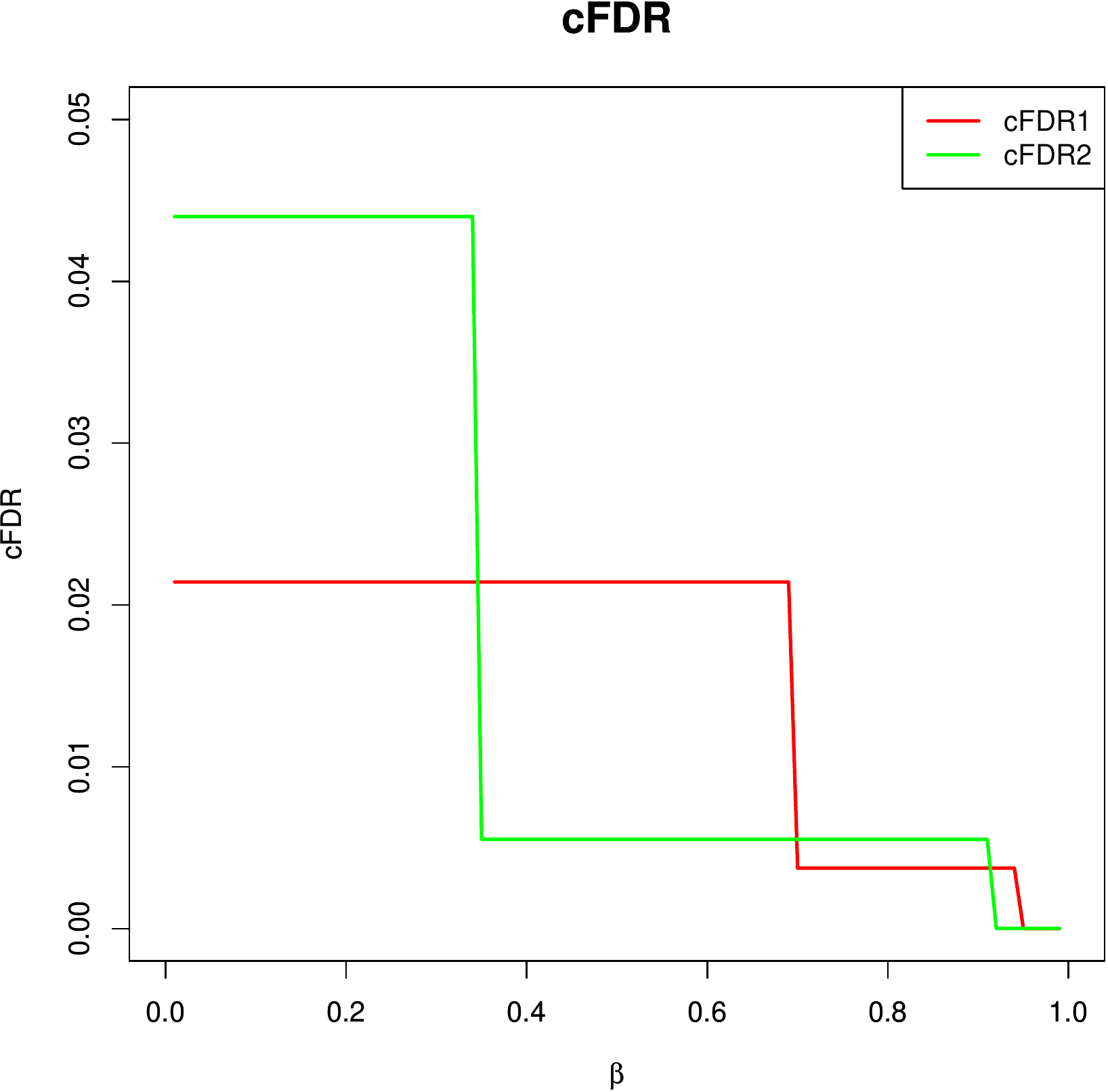}}
	\hspace{2mm}
	\subfigure [$cFNR_{nm}$ as a function of $\beta_{nm}$]{ \label{fig:cfnr4}
	\includegraphics[width=7.5cm,height=7.5cm]{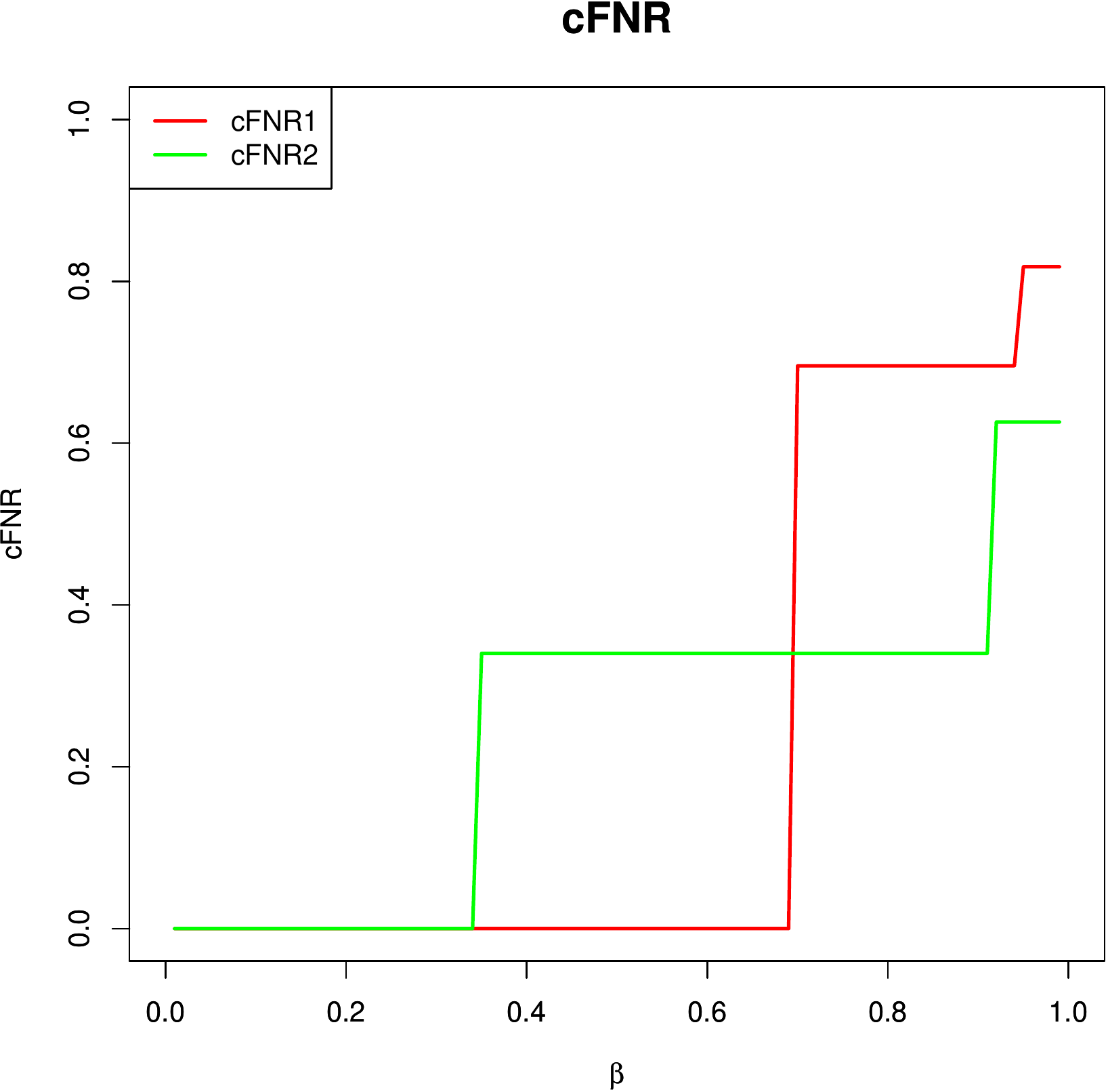}}
	\caption{$cFDR_{nm}$ and $cFNR_{nm}$ as functions of $\beta_{nm}$ in the misspecified situation of the model and variable selection problem.}
	\label{fig:error_rates4}
\end{figure}

\section{Summary and discussion}
\label{sec:conclusion}
Inverse regression problems have received little attention and Bayesian inverse regression problems occupy even lesser space in the 
statistical literature (see \ctn{Chatterjee17} for an overview). In particular, model selection procedures that account for the inverse perspective
has not even been touched upon so far, except the recent pseudo-Bayes factor undertaking by \ctn{Chat20a}. In this article we propose and develop a novel
Bayesian multiple testing formulation for the above purpose. Despite the relevance and elegance 
of the asymptotic theory, the real importance of our contribution
lies in realistic, small sample situations where the inverse perspective of the competing models are expected to be most pronounced. The fast and efficient
computational strategy that we employ for implementing our multiple testing procedure renders inverse model selection straightforward in the realistic
finite sample context. Interestingly, the forward pseudo-Bayes factor also features in our computational methodology, lending efficiency once it is available
for the competing models. Most importantly, our simulation experiments demonstrate that our Bayesian multiple testing procedure can improve upon the results
of both forward and inverse pseudo-Bayes factors. 

Although in this article we have exclusively considered the consistent prior for $\tilde x_i$ developed by \ctn{Chat20}, at least for applications
there is no bar to specifying any other sensible prior for $\tilde x_i$. Even though such priors need not lead to consistency of the inverse cross-validation
posteriors, acceptable finite-sample based Bayesian inference can be obtained as in any other situations, for any $n>1$ and $m\geq 1$.  

Although we shall consider applications of our multiple testing procedure to various real data problems, let us present here some of our previous results
on assessment of some palaeoclimate reconstruction models using the inverse reference distribution approach of \ctn{Bhattacharya13} 
in the light of our new multiple testing strategy. 

\ctn{Vasko00} reported a regular MCMC based inverse cross-validation exercise for a data set comprising
multivariate counts $y_i$ on $m = 52$ species of chironomid at $n = 62$ lakes (sites)
in Finland. The unidimensional $x_i$ denote mean July air temperature. As species respond
differently to summer temperature, the variation in the composition provides the analyst
with information on summer temperatures. This information is exploited to reconstruct
past climates from count data derived from fossils in the lake sediment; see \ctn{Korhola02}.
The Bayesian model is a Multinomial-Dirichlet model for the species counts with a Gaussian response function of the species parameters. 
However, \ctn{Bhattacharya13} showed that the posterior probabilities associated with the discrepancy measures $T_1$ and $T_2$ given by (\ref{eq:T1_old}) and
(\ref{eq:T2_old}) were almost zero.
\ctn{Bhatta06} proposed an improved Bayesian model for the same dataset, by replacing the unimodal Gaussian response function with a Dirichlet process (\ctn{Ferguson73}) based
mixture of Gaussian functions, which very flexibly allows unknown number of climate preferences and tolerance levels for each species. Although this model brought about
marked improvement over that of \ctn{Vasko00} in terms of including significantly more $x_i$ in the associated 95\% highest posterior density credible intervals of the
cross-validation posteriors, the posterior probabilities associated with $T_1$ and $T_2$ were still almost zero. 
A much improved palaeoclimate model was finally postulated by \ctn{Sabya13} by replacing the multinomial model with zero-inflated multinomial to account for excess
zero species counts typically present in the data. The other features of the model are similar to that of \ctn{Bhatta06}. Not only does this model 
far surpasses the previous models in terms of including the percentage of $x_i$ in the corresponding 95\% highest posterior density credible intervals
of the cross-validation posteriors (indeed, about 97\% $x_i$ are included in the respective intervals), inverse reference distributions for 
various discrepancy measures, including $T_1$ and $T_2$, comfortably 
contain the observed discrepancy measures in their respective 95\% highest posterior density credible intervals 
such that the relevant posterior probabilities associated with the discrepancy measures are significantly large. Recast in our multiple testing framework,
the results show that irrespective of the posterior probabilities of the aforementioned three Bayesian models, the multiple testing method would select
the model of \ctn{Sabya13} because of the overwhelming impact of its inverse regression part compared to the other two competing models.

In \ctn{Haslett06} pollen data was used, rather than chironomid data. The training data consisted of $7815$ observations of two climate variables and 14 species of pollen. 
The model proposed by \ctn{Haslett06} is again a Multinomial-Dirichlet distribution, but the two-dimensional response surface  
is based on lattice Gaussian Markov Random Field (GMRF) (see, for example, \ctn{Rue05}) which is responsible for creation of a very large number of parameters.
Indeed, their model consists of about $10,000$ parameters. The other limitations of this model are summarized in \ctn{Sabya13}. 
Applying the inverse reference distribution approach to this model and data \ctn{Bhattacharya04c} (Chapter 7) obtained almost zero posterior probability of the inverse
part. In fact, he demonstrated that this model overfits the pollen data; see also \ctn{Sabya13} who point out that such overfit is the consequence of the
very large number of parameters and the GMRF assumption. The general zero-inflated Multinomial-Dirichlet model along with the Dirichlet process based bivariate
Gaussian mixture model for the response functions proposed by \ctn{Sabya13} again turned out to be very successful in handling 
this pollen based palaeoclimate data. While including 
more than 94\% of the two observed climate variables in their respective 95\% highest posterior density credible intervals, the inverse reference distributions
well-captured the observed discrepancy measures, so that again the posterior probability of the inverse part turned out to be emphatically pronounced.
Thus, recast in our multiple testing paradigm, one can easily see that the zero-inflated Multinomial-Dirichlet model with the Dirichlet process based
response function would emerge the clear winner.

\newpage

\begin{appendix}

\section*{Appendix}

\section{Preliminaries for ensuring posterior consistency under general setup}
\label{sec:shalizi}

Following \ctn{Shalizi09} we consider a probability space $(\Omega,\mathcal F, P)$, 
and a sequence of random variables $y_1,y_2,\ldots$,   
taking values in some measurable space $(\Xi,\mathcal Y)$, whose
infinite-dimensional distribution is $P$. Let $\bY_n=\{y_1,\ldots,y_n\}$. The natural filtration of this process is
$\sigma(\bY_n)$, the smallest $\sigma$-field with respect to which $\bY_n$ is measurable. %where $\bY_n=(Y_1,Y_2,\ldots,Y_n)^T$.

We denote the distributions of processes adapted to $\sigma(\bY_n)$ 
by $F_{\theta}$, where $\theta$ is associated with a measurable
space $(\Theta,\mathcal T)$, and is generally infinite-dimensional. 
For the sake of convenience, we assume, as in \ctn{Shalizi09}, that $P$
and all the $F_{\theta}$ are dominated by a common reference measure, with respective
densities $f_{\theta_0}$ and $f_{\theta}$. The usual assumptions that $P\in\Theta$ or even $P$ lies in the support 
of the prior on $\Theta$, are not required for Shalizi's result, rendering it very general indeed.

\subsection{Assumptions and theorems of Shalizi}
\label{subsec:assumptions_shalizi}

\begin{itemize}
\item[(S1)] Consider the following likelihood ratio:
\begin{equation*}
R_n(\theta)=\frac{f_{\theta}(\bY_n)}{f_{\theta_0}(\bY_n)}.
%\label{eq:R_n}
\end{equation*}
Assume that $R_n(\theta)$ is $\sigma(\bY_n)\times \mathcal T$-measurable for all $n>0$.
\end{itemize}

\begin{itemize}
\item[(S2)] For every $\theta\in\Theta$, the KL-divergence rate
\begin{equation*}
h(\theta)=\underset{n\rightarrow\infty}{\lim}~\frac{1}{n}E\left(\log\frac{f_{\theta_0}(\bY_n)}{f_{\theta}(\bY_n)}\right).
%\label{eq:S3}
\end{equation*}
exists (possibly being infinite) and is $\mathcal T$-measurable.
\end{itemize}

\begin{itemize}
\item[(S3)] For each $\theta\in\Theta$, the generalized or relative asymptotic equipartition property holds, and so,
almost surely,
\begin{equation*}
\underset{n\rightarrow\infty}{\lim}~\frac{1}{n}\log R_n(\theta)=-h(\theta).
\end{equation*}
\end{itemize}

\begin{itemize}
\item[(S4)] 
Let $I=\left\{\theta:h(\theta)=\infty\right\}$. 
The prior $\pi$ satisfies $\pi(I)<1$.
\end{itemize}

%Following the notation of \ctn{Shalizi09}, for $A\subseteq\Theta$, let
%\begin{align}
%h\left(A\right)&=\underset{\theta\in A}{\mbox{ess~inf}}~h(\theta);\label{eq:h2}\\
%J(\theta)&=h(\theta)-h(\Theta);\label{eq:J}\\
%J(A)&=\underset{\theta\in A}{\mbox{ess~inf}}~J(\theta).\label{eq:J2}
%\end{align}
\begin{itemize}
\item[(S5)] There exists a sequence of sets $\mathcal G_n\rightarrow\Theta$ as $n\rightarrow\infty$ 
such that: %along with $\pi(\mathcal G_T)>0$
\begin{enumerate}
\item[(1)]
\begin{equation}
\pi\left(\mathcal G_n\right)\geq 1-\zeta\exp\left(-\gamma n\right),~\mbox{for some}~\zeta>0,~\gamma>2h(\Theta);
\label{eq:S5_1}
\end{equation}
\item[(2)]The convergence in (S3) is uniform in $\theta$ over $\mathcal G_n\setminus I$.
\item[(3)] $h\left(\mathcal G_n\right)\rightarrow h\left(\Theta\right)$, as $n\rightarrow\infty$.
\end{enumerate}
\end{itemize}
For each measurable $A\subseteq\Theta$, for every $\delta>0$, there exists a random natural number $\tau(A,\delta)$
such that
\begin{equation}
n^{-1}\log\int_{A}R_n(\theta)\pi(\theta)d\theta
\leq \delta+\underset{n\rightarrow\infty}{\lim\sup}~n^{-1}
\log\int_{A}R_n(\theta)\pi(\theta)d\theta,
\label{eq:limsup_2}
\end{equation}
for all $n>\tau(A,\delta)$, provided 
$\underset{n\rightarrow\infty}{\lim\sup}~n^{-1}\log\pi\left(\mathbb I_A R_n\right)<\infty$.
%$\mathbb I_A$ denotes the indicator function of the set $A$.
Regarding this, the following assumption has been made by Shalizi:
\begin{itemize}
\item[(S6)] The sets $\mathcal G_n$ of (S5) can be chosen such that for every $\delta>0$, the inequality
$n>\tau(\mathcal G_n,\delta)$ holds almost surely for all sufficiently large $n$.
\end{itemize}
\begin{itemize}
\item[(S7)] The sets $\mathcal G_n$ of (S5) and (S6) can be chosen such that for any set $A$ with $\pi(A)>0$, 
\begin{equation}
h\left(\mathcal G_n\cap A\right)\rightarrow h\left(A\right),
\label{eq:S7}
\end{equation}
as $n\rightarrow\infty$.
\end{itemize}
%Under the above assumptions, \ctn{Shalizi09} proved the following results.

%\begin{theorem}[\ctn{Shalizi09}]
%\label{theorem:shalizi1}
%Consider assumptions (S1)--(S7) and any set $A\in\mathcal T$ with $\pi(A)>0$ and $h(A)>h(\Theta)$. Then,
%\begin{equation*}
%\underset{n\rightarrow\infty}{\lim}~\pi(A|\bY_n)=0~\mbox{almost surely},
%%\label{eq:supp_post_conv1}
%\end{equation*}
%where $\pi(\cdot|\bY_n)$ denotes the posterior distribution of $\theta$ given $\bY_n$.
%\end{theorem}

\end{appendix}

\bibliography{irmcmc}

\begin{thebibliography}{}

\bibitem[Bhattacharya(2004)Bhattacharya]{Bhattacharya04c}
Bhattacharya, S. (2004).
\newblock {\em {I}mportance {R}esampling {MCMC}: {A} {M}ethodology for
  {C}ross-{V}alidation in {I}nverse {P}roblems and its {A}pplications in
  {M}odel {A}ssessment\/}.
\newblock Doctoral thesis, {D}epartment of {S}tatistics, {T}rinity {C}ollege
  {D}ublin.
\newblock Available at http://www.tcd.ie/Statistics/JHpersonal/thesis.pdf.

\bibitem[Bhattacharya(2006)Bhattacharya]{Bhatta06}
Bhattacharya, S. (2006).
\newblock {A} {B}ayesian {S}emiparametric {M}odel for {O}rganism {B}ased
  {E}nvironmental {R}econstruction.
\newblock {\em Environmetrics\/}, {\bf 17}(7), 763--776.

\bibitem[Bhattacharya(2013)Bhattacharya]{Bhattacharya13}
Bhattacharya, S. (2013).
\newblock {A} {F}ully {B}ayesian {A}pproach to {A}ssessment of {M}odel
  {A}dequacy in {I}nverse {P}roblems.
\newblock {\em Statistical Methodology\/}, {\bf 12}, 71--83.
\newblock Latest version available at ArXiv.

\bibitem[Bhattacharya and Haslett(2007)Bhattacharya and Haslett]{Bhatta07}
Bhattacharya, S. and Haslett, J. (2007).
\newblock {I}mportance {R}e-sampling {MCMC} for {C}ross-{V}alidation in
  {I}nverse {P}roblems.
\newblock {\em Bayesian Analysis\/}, {\bf 2}, 385--408.

\bibitem[Chandra and Bhattacharya(2019)Chandra and Bhattacharya]{Chandra19}
Chandra, N.~K. and Bhattacharya, S. (2019).
\newblock {N}on-marginal {D}ecisions: {A} {N}ovel {B}ayesian {M}ultiple
  {T}esting {P}rocedure.

\bibitem[Chandra and Bhattacharya(2020)Chandra and Bhattacharya]{Chandra20}
Chandra, N.~K. and Bhattacharya, S. (2020).
\newblock {A}symptotic {T}heory of {D}ependent {B}ayesian {M}ultiple {T}esting
  {P}rocedures {U}nder {P}ossible {M}odel {M}isspecification.
\newblock ArXiv Preprint.

\bibitem[Chatterjee and Bhattacharya(2017)Chatterjee and
  Bhattacharya]{Chatterjee17}
Chatterjee, D. and Bhattacharya, S. (2017).
\newblock {A} {S}tatistical {P}erspective of {I}nverse and {I}nverse
  {R}egression {P}roblems.
\newblock {\em RASHI\/}, {\bf 2}, 67--82.
\newblock Latest version available at ArXiv.

\bibitem[Chatterjee and Bhattacharya(2020a)Chatterjee and
  Bhattacharya]{Chat20a}
Chatterjee, D. and Bhattacharya, S. (2020a).
\newblock {C}onvergence of {P}seudo-{B}ayes {F}actors in {F}orward and
  {I}nverse {R}egression {P}roblems.
\newblock ArXiv Preprint.

\bibitem[Chatterjee and Bhattacharya(2020b)Chatterjee and Bhattacharya]{Chat20}
Chatterjee, D. and Bhattacharya, S. (2020b).
\newblock {P}osterior {C}onsistency of {B}ayesian {I}nverse {R}egression and
  {I}nverse {R}eference {D}istributions.
\newblock ArXiv Preprint.

\bibitem[Chatterjee {\em et~al.}(2018)Chatterjee, Maitra, and
  Bhattacharya]{Chatterjee18}
Chatterjee, D., Maitra, T., and Bhattacharya, S. (2018).
\newblock {A} {S}hort {N}ote on {A}lmost {S}ure {C}onvergence of {B}ayes
  {F}actors in the {G}eneral {S}et-{U}p.
\newblock {\em The American Statistician\/}, {\bf 72}(1), 17--20.

\bibitem[Dutta and Bhattacharya(2014)Dutta and Bhattacharya]{Dutta13}
Dutta, S. and Bhattacharya, S. (2014).
\newblock {M}arkov {C}hain {M}onte {C}arlo {B}ased on {D}eterministic
  {T}ransformations.
\newblock {\em Statistical Methodology\/}, {\bf 16}, 100--116.
\newblock Also available at http://arxiv.org/abs/1106.5850. Supplement
  available at http://arxiv.org/abs/1306.6684.

\bibitem[Ferguson(1974)Ferguson]{Ferguson73}
Ferguson, T.~S. (1974).
\newblock {A} {B}ayesian {A}nalysis of {S}ome {N}onparametric {P}roblems.
\newblock {\em The Annals of Statistics\/}, {\bf 1}, 209--230.

\bibitem[Geisser and Eddy(1979)Geisser and Eddy]{Geisser79}
Geisser, S. and Eddy, W.~F. (1979).
\newblock {A} predictive approach to model selection.
\newblock {\em Journal of the American Statistical Association\/}, {\bf
  74}(365), 153--160.

\bibitem[Ghosal and {van derVaart}(2017)Ghosal and {van derVaart}]{Ghosal17}
Ghosal, A. and {van derVaart}, A. (2017).
\newblock {\em {F}undamentals of {N}onparametric {B}ayesian {I}nference\/}.
\newblock Cambridge University Press, Cambridge, UK.

\bibitem[Ghosh and Ramamoorthi(2003)Ghosh and Ramamoorthi]{Ghosh03}
Ghosh, J.~K. and Ramamoorthi, R.~V. (2003).
\newblock {\em {B}ayesian {N}onparametrics\/}.
\newblock Springer, New York, USA.

\bibitem[Guindani {\em et~al.}(2009)Guindani, M{\"u}ller, and
  Zhang]{Guindani09}
Guindani, M., M{\"u}ller, P., and Zhang, S. (2009).
\newblock A bayesian discovery procedure.
\newblock {\em Journal of the Royal Statistical Society: Series B (Statistical
  Methodology)\/}, {\bf 71}(5), 905--925.

\bibitem[Haslett {\em et~al.}(2006)Haslett, abd S.~Bhattacharya,
  Salter-Townshend, Wilson, Allen, Huntley, and Mitchell]{Haslett06}
Haslett, J., abd S.~Bhattacharya, M.~W., Salter-Townshend, M., Wilson, S.~P.,
  Allen, J. R.~M., Huntley, B., and Mitchell, F. J.~G. (2006).
\newblock Bayesian palaeoclimate reconstruction (with discussion).
\newblock {\em Journal of the Royal Statistical Society. Series A\/}, {\bf
  169}, 395--438.

\bibitem[Korhola {\em et~al.}(2002)Korhola, Vasko, Toivonen, and
  Olander]{Korhola02}
Korhola, A., Vasko, K., Toivonen, H. T.~T., and Olander, H. (2002).
\newblock {H}olocene temperature changes in northern {F}ennoscandia
  reconstructed from chironomids using {B}ayesian modelling.
\newblock {\em Quaternary Science Reviews\/}, {\bf 21}, 1841--1860.

\bibitem[Liang {\em et~al.}(2008)Liang, Paulo, Molina, Clyde, and
  Berger]{Liang08}
Liang, F., Paulo, R., Molina, G., Clyde, M.~A., and Berger, J.~O. (2008).
\newblock {M}ixtures of $g$ {P}riors for {B}ayesian {V}ariable {S}election.
\newblock {\em Journal of American Statistical Association\/}, {\bf 103}(481),
  410--423.

\bibitem[Mukhopadhyay and Bhattacharya(2013)Mukhopadhyay and
  Bhattacharya]{Sabya13}
Mukhopadhyay, S. and Bhattacharya, S. (2013).
\newblock {C}ross-{V}alidation {B}ased {A}ssessment of a {N}ew {B}ayesian
  {P}alaeoclimate {M}odel.
\newblock {\em Environmetrics\/}, {\bf 24}, 550--568.
\newblock More comprehensive version available at ArXiv.

\bibitem[M{\"u}ller {\em et~al.}(2004)M{\"u}ller, Parmigiani, Robert, and
  Rousseau]{muller04}
M{\"u}ller, P., Parmigiani, G., Robert, C., and Rousseau, J. (2004).
\newblock Optimal sample size for multiple testing: the case of gene expression
  microarrays.
\newblock {\em Journal of the American Statistical Association\/}, {\bf
  99}(468), 990--1001.

\bibitem[Rue and Held(2005)Rue and Held]{Rue05}
Rue, H. and Held, L. (2005).
\newblock {\em {G}aussian {M}arkov {R}andom {F}ields\/}.
\newblock Chapman \& Hall/CRC, Boca Raton.

\bibitem[Sarkar {\em et~al.}(2008)Sarkar, Zhou, and Ghosh]{SanatGhosh08}
Sarkar, S.~K., Zhou, T., and Ghosh, D. (2008).
\newblock {{A general decision theoretic formulation of procedures controlling
  FDR and FNR from a Bayesian perspective}}.
\newblock {\em Statistica Sinica\/}, {\bf 18}(3), 925--945.

\bibitem[Schervish(1995)Schervish]{Schervish95}
Schervish, M.~J. (1995).
\newblock {\em {T}heory of {S}tatistics\/}.
\newblock Springer, New York, USA.

\bibitem[Shalizi(2009)Shalizi]{Shalizi09}
Shalizi, C.~R. (2009).
\newblock {D}ynamics of {B}ayesian {U}pdating {W}ith {D}ependent {D}ata and
  {M}isspecified {M}odels.
\newblock {\em Electronic Journal of Statistics\/}, {\bf 3}, 1039--1074.

\bibitem[Shimodaira(1998)Shimodaira]{Shim98}
Shimodaira, H. (1998).
\newblock {An} {A}pplication of {M}odel {C}omparison {T}echniques to {M}odel
  {S}election.
\newblock {\em Annals of the Institute of Statistical Mathematics\/}, {\bf
  50}(1), 1--13.

\bibitem[Storey(2003)Storey]{storey03}
Storey, J.~D. (2003).
\newblock {The positive false discovery rate: a Bayesian interpretation and the
  q-value}.
\newblock {\em Ann. Statist.}, {\bf 31}(6), 2013--2035.

\bibitem[Vasko {\em et~al.}(2000)Vasko, Toivonen, and Korhola]{Vasko00}
Vasko, K., Toivonen, H.~T., and Korhola, A. (2000).
\newblock A {B}ayesian multinomial {G}aussian response model for organism-based
  environmental reconstruction.
\newblock {\em Journal of Paleolimnology\/}, {\bf 24}, 243--250.

\end{thebibliography}

\end{document}